\pdfoutput=1
\documentclass[10pt]{amsart}

\usepackage{amssymb, amsthm, amsmath, amsfonts, amsxtra, mathrsfs, mathtools}
\usepackage{tikz-cd}
\usepackage{enumitem}
\setlist[description]{leftmargin=\parindent,labelindent=\parindent}
\usetikzlibrary{matrix,arrows,decorations.pathmorphing}
\usepackage{graphicx}
\usepackage{xcolor}
\usepackage{tikz-cd}
\usepackage{enumitem}
\usepackage{url}
\usepackage{hyperref}
\hypersetup{colorlinks=true,
    linkcolor=blue,
    filecolor=blue,
    citecolor=blue}
\usepackage{float}
\usepackage{colonequals}
\setlist[description]{leftmargin=\parindent,labelindent=\parindent}
\usetikzlibrary{matrix,arrows,decorations.pathmorphing}
\input xy
\xyoption{all}


\usepackage[letterpaper, top=3cm, bottom=3cm,left=2.7cm, right=2.7cm, heightrounded,bindingoffset=0mm, marginparwidth=22mm]{geometry}

\newtheorem{theorem}{Theorem}[section]
\newtheorem{proposition}[theorem]{Proposition}
\newtheorem{lemma}[theorem]{Lemma}

\theoremstyle{definition}

\newtheorem{definition}[theorem]{Definition}

\newtheorem{remark}[theorem]{Remark}

\newtheorem{example}[theorem]{Example}



\def\B{\mathcal{B}}

\def\Mbar{\overline{\mathcal{M}}}
\def\M{\mathcal{M}}
\def\U{\mathcal{U}}

\def\Q{\mathbb{Q}}

\def\ZZ{\mathbb{Z}}

\def\H{\mathcal{H}}

\def\P{\mathbb{P}}
\def\O{\mathcal{O}}
\def\cP{\mathcal{P}}
\def\V{\mathcal{V}}
\def\W{\mathcal{W}}
\def\E{\mathcal{E}}
\def\Zbar{\overline{Z}}

\def\sB{\mathscr{B}}
\def\sU{\mathscr{U}}
\def\sH{\mathscr{H}}
\def\sHbar{\overline{\mathscr{H}}}
\def\sP{\mathscr{P}}

\def\sE{\mathscr{E}}

\newcommand{\pp}{\mathbb{P}}
\newcommand{\qq}{\mathbb{Q}}

\newcommand{\SL}{\mathrm{SL}}
\newcommand{\PGL}{\mathrm{PGL}}
\newcommand{\Mb}{\overline{\mathcal{M}}}
\newcommand{\Sym}{\mathrm{Sym}}
\newcommand{\I}{\mathcal{I}}
\renewcommand{\L}{\mathcal{L}}

\title{Chow Rings of Hurwitz Spaces with Marked Ramification}

\author[E.~Clader]{Emily Clader}
\address{Department of Mathematics, San Francisco State University, United States of America}
\email{eclader@sfsu.edu}

\author[Z.~Hu]{Zhengning Hu}
\address{Department of Mathematics, University of Arizona, United States of America}
\email{zhengninghu@arizona.edu}

\author[H.~Larson]{Hannah Larson}
\address{Department of Mathematics, University of California, Berkeley, United States of America}
\email{hlarson@berkeley.edu}

\author[A.Q.~Li]{Amy Q. Li}
\address{Department of Mathematics, University of Texas at Austin, United States of America}
\email{amyqli@utexas.edu}

\author[R.~Lopez]{Rose Lopez}
\address{Department of Mathematics, University of California, Berkeley, United States of America}
\email{roselopez@berkeley.edu}

\subjclass[2020]{14H10, 14H30, 14C15, 14C17}

\begin{document}

\begin{abstract}
The Hurwitz space $\sHbar_{k,g}$ is a compactification of the space of smooth genus-$g$ curves with a simply-branched degree-$k$ map to $\P^1$.  In this paper, we initiate a study of the Chow rings of these spaces, proving in particular that when $k=3$ (which is the first case in which the Chow ring is not already known), the codimension-2 Chow group is generated by the fundamental classes of codimension-2 boundary strata.  The key tool is to realize the codimension-1 boundary strata of $\sHbar_{3,g}$ as the images of gluing maps whose domains are products of Hurwitz spaces $\sH_{k',g'}(\mu)$ with a single marked fiber of prescribed (not necessarily simple) ramification profile $\mu$, and to prove that the spaces $\sH_{k',g'}(\mu)$ with $k'=2,3$ have trivial Chow ring.
\end{abstract}
\maketitle

\section{Introduction}

The moduli space $\M_g$ of smooth genus-$g$ curves admits a compactification $\Mb_g$ parameterizing stable curves.
Since stable curves are built by gluing together smooth curves at marked points, the boundary of $\Mb_g$ admits a stratification by topological type where each stratum is a finite group quotient of a product of moduli spaces $\M_{g',n'}$. In \cite{CaLa:23}, the authors use this stratification together with a study of the Chow rings of $\M_{g',n'}$ to obtain generators for the Chow rings of $\Mb_g$ for $g \leq 7$. The goal of the present paper is to initiate a similar strategy for Hurwitz spaces and their compactification by moduli of admissible covers.

Let $\sH_{k,g}$ be the moduli space of simply-branched, degree-$k$, genus-$g$ covers of smooth genus-zero curves.
By the Riemann--Hurwitz formula, such covers are branched over $b := 2g - 2 + 2k$ distinct points on the target.
Here, we study the admissible covers compactification $\overline{\sH}_{k,g} \supseteq \sH_{k,g}$, which is the quotient of the moduli space of admissible covers by the $S_b$-action permuting the branch points.
Thus, in our case, the branch divisor defines a morphism $\overline{\sH}_{k,g} \to \Mb_{0,b}/S_b$.
When $k = 2$, this morphism induces an isomorphism on coarse moduli spaces, and so an isomorphism on Chow rings with rational coefficients:
\begin{equation} \label{2} A^*(\overline{\sH}_{2,g}) = A^*(\Mb_{0,b}/S_b) = A^*(\Mb_{0,b})^{S_b}.
\end{equation}

The boundary of $\overline{\sH}_{k,g}$ admits a stratification by the topological type and branching behavior of the source curve.
As a first question towards understanding the intersection theory of $\overline{\sH}_{k,g}$, we may ask: When do the fundamental classes of closures of boundary strata generate the Chow groups $A^i(\overline{\sH}_{k,g})$?  For $k =2$, \eqref{2} implies that the fundamental classes of the closures of boundary strata generate the Chow groups of $\overline{\sH}_{2,g}$ for all codimensions $i$,
because the analogous statement is true on $\Mbar_{0,b}$ by \cite{Kee:92}.
For $i = 1$, the Picard Rank Conjecture predicts that $A^1(\overline{\sH}_{k,g})$ is generated by the fundamental classes of boundary divisors (that is, the closures of codimension-$1$ boundary strata). The conjecture is known to hold for $k \leq 5$ \cite{DePa:15} and $k > g - 1$ \cite{Mul:23}. However, little is known about the higher-codimension Chow groups.
Here, we study the first case of interest, which is codimension $i = 2$ for covers of degree $k = 3$.

\begin{theorem} \label{main}
The Chow group $A^2(\overline{\sH}_{3,g})$ is generated by the fundamental classes of closures of boundary strata of codimension $2$.
\end{theorem}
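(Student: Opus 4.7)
The plan is to peel off the boundary of $\sHbar_{3,g}$ by repeated excision and reduce the theorem to the triviality of rational Chow rings of open Hurwitz spaces with prescribed ramification, which is the central technical input the paper promises.

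First, let $D_1, \dots, D_N$ denote the codimension-$1$ boundary divisors of $\sHbar_{3,g}$ and $\iota_j \colon D_j \hookrightarrow \sHbar_{3,g}$ their inclusions. Excision gives the right-exact sequence
\[
\bigoplus_{j=1}^{N} A^1(D_j) \;\xrightarrow{\sum (\iota_j)_*}\; A^2(\sHbar_{3,g}) \;\longrightarrow\; A^2(\sH_{3,g}) \;\longrightarrow\; 0.
\]
Since any codimension-$1$ substratum of a boundary divisor $D_j$ pushes forward to a codimension-$2$ boundary stratum of $\sHbar_{3,g}$, it suffices to prove (a) $A^2(\sH_{3,g}) = 0$ and (b) each $A^1(D_j)$ is generated by fundamental classes of codimension-$1$ closed substrata of $D_j$. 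Claim (a) is a special case of the paper's promised triviality of $A^*(\sH_{3,g'}(\mu))$, applied to the fully simple ramification profile.

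For (b), I use the description from the abstract: each $D_j$ is, up to a finite quotient, the image of a gluing map
\[
\xi_j \colon \prod_i \sHbar_{k_i', g_i'}(\mu_i) \longrightarrow \sHbar_{3,g}
\]
with $k_i' \in \{1,2,3\}$ and $\mu_i$ recording the ramification at the nodes. Applying excision once more to $D_j$ itself yields
\[
A^0(\partial D_j) \;\longrightarrow\; A^1(D_j) \;\longrightarrow\; A^1(D_j^{\circ}) \;\longrightarrow\; 0,
\]
where $D_j^{\circ} \subset D_j$ is the open dense locus of covers of the topological type generic to $D_j$, corresponding via $\xi_j$ to the product $\prod_i \sH_{k_i',g_i'}(\mu_i)$ of open Hurwitz spaces with prescribed ramification. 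The $k_i' = 1$ factors are (open subsets of) moduli spaces of pointed rational curves, whose rational Chow rings are trivial by Keel, and the $k_i' \in \{2,3\}$ factors have trivial rational Chow by the main technical input. Since each factor has its rational Chow group concentrated in degree zero, each admits a decomposition of the diagonal, so a Künneth-type argument gives $A^*\bigl(\prod_i \sH_{k_i',g_i'}(\mu_i)\bigr) = \mathbb{Q}$; pushing forward through the finite gluing map and taking invariants then yields $A^1(D_j^{\circ}) = 0$. Consequently $A^1(D_j)$ is generated by the fundamental classes of the components of $\partial D_j$, as desired.

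The crux of the argument, and its main obstacle, is thus establishing the triviality of $A^*(\sH_{k',g'}(\mu))$ in positive codimension for $k' = 2, 3$. The $k' = 2$ case should follow because a double cover is determined by its branch divisor, so the branch morphism identifies $\sH_{2,g'}(\mu)$ with an open substack of a quotient of $\M_{0,n}$, which is rationally Chow-trivial by Keel. The $k' = 3$ case is the genuinely new ingredient: a plausible route is to invoke the Casnati--Ekedahl structure theorem, presenting a degree-$3$ cover as the vanishing of a section of a line bundle on a $\P^1$-bundle over the target, in order to realize $\sH_{3,g'}(\mu)$ as an open substack of a tower of projective bundles over a moduli space of pointed rational curves; the triviality of its rational Chow ring would then follow from the projective bundle formula together with Keel's presentation.
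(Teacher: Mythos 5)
Your overall skeleton is the same as the paper's: excise $\sH_{3,g}$ using $A^2(\sH_{3,g})=0$ from \cite{CaLa:22}, then excise again inside each boundary divisor to reduce to showing that $A^1$ of each \emph{open} codimension-$1$ stratum vanishes, these strata being finite quotients of products of Hurwitz spaces with one marked ramification profile. However, two steps in your argument have genuine gaps, and they are precisely where the paper's real work lies.

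First, your K\"unneth step is not justified. You assert that because each factor has rational Chow ring $\Q$, it ``admits a decomposition of the diagonal'' and hence the product has trivial Chow ring. Decomposition-of-the-diagonal arguments require properness (and the diagonal class lives in $A^*(X\times X)$, which is exactly the group you do not yet control), and these Hurwitz spaces are not proper. Triviality of $A^*(X)$ does \emph{not} imply that $A^*(X)\otimes A^*(Y)\to A^*(X\times Y)$ is surjective; the paper emphasizes that there is no K\"unneth formula in Chow and instead proves separately that each factor has the Chow--K\"unneth generation Property, by exhibiting it as being built from the stack $\B$ of rank-$2$ bundles on $\P^1$-bundles via open immersions, vector bundles, and projective bundles (the morphism classes (M1)--(M6) of Section~\ref{subsec:ckgp}). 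You need this, or some substitute, before you can conclude anything about $A^1$ of a product.

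Second, your proposed proof that $A^*(\sH_{3,g'}(\mu))=\Q$ does not work as sketched. Realizing $\sH_{3,g'}(\mu)$ as an open substack of a (projective or vector) bundle tower only shows that its Chow ring is \emph{generated} by classes pulled back from the base; the projective bundle formula gives generators, not relations. To conclude triviality one must compute the fundamental classes of the excised loci (singular covers, covers with extra non-simple ramification) and show they kill all generators in positive degree. This is the content of Sections~\ref{sec:111}--\ref{sec:3} of the paper, and for $\mu=(3)$ it requires a nontrivial excess intersection computation because the locus of covers with a \emph{second} triple ramification point meets the diagonal in an excess component. Your proposal defers exactly this, so as written it establishes the reduction but not the theorem. (A minor further point: the base of the tower is the stack $\B$ of rank-$2$ vector bundles on $\P^1$-bundles, not a moduli space of pointed rational curves, and its Chow ring is nontrivial, computed in \cite{Lar:23}; Keel's theorem plays no role in the degree-$3$ case.)
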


It was previously shown in \cite{CaLa:22} that $A^*(\sH_{3,g}) = \qq$, so by excision, any codimension-$2$ class on $\overline{\sH}_{3,g}$ is the pushforward of a codimension-$1$ class supported on the boundary.  Thus, the main work in proving Theorem \ref{main} is to show that the codimension-$1$ locally closed boundary strata have trivial Chow groups in codimension $1$. In fact, we will show that each codimension-$1$ boundary stratum of $\overline{\sH}_{3,g}$ has trivial Chow ring. For degree-$3$ covers, each of these codimension-$1$ boundary strata is a finite group quotient of a product of \emph{Hurwitz spaces with marked ramification}, which, for partitions $\mu$ of $k$, are the moduli spaces $\sH_{k',g'}(\mu)$ of degree-$k'$ covers $C \to \pp^1$ that have ramification profile $\mu$ over a marked special fiber and are simply branched elsewhere.  These spaces play a role analogous to $\M_{g',1}$ when studying the codimension-$1$ boundary strata on $\Mb_{g}$ that occur as the images of gluing maps $\M_{g',1} \times \M_{g-g',1} \to \Mb_g$.

In Section \ref{bdivs}, we explicitly describe the codimension-$1$ boundary strata of $\overline{\sH}_{3,g}$: they are all finite group quotients of products of $\sH_{k',g'}(\mu)$ with $k' \leq 3$ and $g' \leq g$. Finite, proper maps induce surjections on Chow groups with rational coefficients, so it suffices to show triviality of the Chow rings of such products.
In general, the Chow ring of a product of spaces need not be generated by the pullbacks of classes from either factor (that is, there is no K\"unneth formula in Chow).  Nevertheless, some particularly nice spaces $X$ satisfy the so-called \emph{Chow--K\"unneth generation Property (CKgP)}, meaning that for any $Y$, the tensor product map
\[A^*(X) \otimes A^*(Y) \to A^*(X \times Y)\]
is surjective. Our main theorem is the following.

\begin{theorem} \label{withramification}
For each partition $\mu \in \{ (3), (2,1), (1,1,1)\}$ and $g\geq0$, we
have $A^*(\sH_{3,g}(\mu))=\Q$ and $\sH_{3,g}(\mu)$ has the Chow--K\"unneth generation Property.
\end{theorem}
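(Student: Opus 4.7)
The plan is to extend the approach of \cite{CaLa:22}, which established $A^*(\sH_{3,g}) = \Q$ via the Casnati--Ekedahl structure theorem for triple covers, to incorporate the marked ramification. Recall that Casnati--Ekedahl encodes a degree-$3$ cover $\alpha\colon C \to \P^1$ with $g(C) = g$ by its Tschirnhaus bundle $E$ (rank $2$, degree $g+2$) together with a section $\eta \in H^0(\P^1, \Sym^3 E \otimes \det E^\vee)$ whose vanishing in $\P(E)$ is a smooth curve. The first step is thus to realize $\sH_{3,g}(\mu)$ as a quotient stack of the space of triples $(E, \eta, p)$, with $p \in \P^1$ and $\eta$ having ramification profile $\mu$ at $p$ and simple branching elsewhere, by the natural combined action of $\mathrm{Aut}(E)$ and $\PGL_2$.

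Next, I rigidify the marked point by fixing $p = \infty$, which replaces $\PGL_2$ by its affine stabilizer $\mathrm{Aff}_\infty$---a connected special group. Since quotients by connected special groups preserve both triviality of Chow and the CKgP, it suffices to analyze the rigidified moduli. I then stratify by the Tschirnhaus splitting type $E \cong \O(a) \oplus \O(b)$ with $a + b = g + 2$ and $1 \leq a \leq b$. On each stratum, the data reduces to $\eta$ in the vector space $V_{a,b}$ of sections of the appropriate twist of $\Sym^3(\O(a) \oplus \O(b))$, subject to two conditions: (i) $\eta$ defines a smooth cover, an open condition, and (ii) $\eta(\infty)$, viewed as a binary cubic, has the root configuration corresponding to $\mu$---three distinct roots, a double root, or a triple root, respectively. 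Each stratum thus presents as a locally closed subspace of $[V_{a,b} / \mathrm{Aut}(\O(a) \oplus \O(b))]$, a quotient by a special connected group.

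Triviality of the rational Chow ring on each stratum reduces to an equivariant computation: the main content is to verify that the classes of the complementary ``bad'' loci in $V_{a,b}$ generate the equivariant Chow ring of $V_{a,b}$, which equals $A^*(B\mathrm{Aut}(\O(a) \oplus \O(b)))$. Assembling the strata via excision across splitting types then yields $A^*(\sH_{3,g}(\mu)) = \Q$; and the CKgP is inherited from the affine space $V_{a,b}$ through the CKgP-preserving operations of quotient by a special connected group, open immersion, and extension via excision. The main obstacle is the $\mu = (3)$ case: the triple-root condition $\eta(\infty) = \ell^3$ is codimension $2$ in the space of binary cubics, which interacts nontrivially with both the splitting-type stratification and the smoothness condition on the cover. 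Handling this will likely require a secondary stratification of the locus of sections with prescribed triple root at $\infty$, together with a careful analysis of the induced action of $\mathrm{Aut}(\O(a) \oplus \O(b))$, before equivariant excision can be cleanly applied.
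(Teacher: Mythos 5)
Your overall setup (Tschirnhausen bundle plus a section of $\Sym^3 E\otimes\det E^\vee$, with the marked fiber tracked separately) matches the paper's starting point, but the route you take from there has a structural gap. Stratifying by the splitting type $E\cong\O(a)\oplus\O(b)$ and showing each stratum has trivial rational Chow ring does \emph{not}, by excision, yield $A^*(\sH_{3,g}(\mu))=\Q$: the excision sequence only shows that $A^*$ of the union is \emph{generated} by the fundamental classes of the closures of the positive-codimension strata. You would still have to prove that each of these Maroni-type classes vanishes, which is a substantive computation you have not addressed (and which is not automatic --- compare $\Mbar_{0,n}$, where every stratum has trivial Chow ring but the boundary classes are nonzero). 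This is precisely why the paper (following \cite{CaLa:22}) avoids the splitting-type stratification entirely and instead works relatively over the full moduli stack $\B_{2,g+2}$ of rank-$2$ bundles on $\P^1$-bundles, whose Chow ring is known from \cite{Lar:23}; the space $\sH_{3,g}(\mu)$ is then realized as an open substack of a single vector bundle $V_\mu$ over a projective bundle (or fiber product of projective bundles) $X_\mu$ over $\B$, built as the kernel of a surjective principal-parts evaluation map, and the relations come from computing the classes of the excised loci as Chern classes. The generators killed this way include the classes pulled back from $\B$ (which die because they factor through $A^*(\sH_{3,g})=\Q$ when $\mu\neq(3)$), so no stratum-class vanishing is ever needed.

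Two further points. First, for $\mu=(1,1,1)$ the moduli problem includes a labeling of the three points of the marked fiber (this labeling is essential for the gluing maps to boundary strata of $\sHbar_{3,g}$); your space of triples $(E,\eta,p)$ with $p\in\P^1$ is the $S_3$-quotient of the correct space, and $A^*(X/G)=A^*(X)^G=\Q$ does not imply $A^*(X)=\Q$. Second, for $\mu=(3)$ you correctly flag that case as hardest, but the actual difficulty is not the codimension of the triple-root condition at the marked point: it is that one must excise the locus of covers with a \emph{second}, unmarked triple ramification point, and the class of that locus is computed by intersecting two translates of the ``triple point'' subbundle inside a common vector bundle over $\P\E^\vee\times_\B\P\E^\vee$; this intersection has an excess component supported on the diagonal $p=q$, so its class requires the excess intersection formula, not just a top Chern class. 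Your proposal does not identify this step, and without it the relation needed to kill the last generator of $A^1$ in the $\mu=(3)$ case is missing.
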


In Lemma \ref{h2},
we give a simple argument that $\sH_{2,g}(\mu)$ has trivial Chow ring and the CKgP for $\mu =(2)$ and $g\geq 1$, and for $\mu=(1,1)$ and $g\geq 0$. Combining this with Theorem~\ref{withramification} and the above discussion, we deduce Theorem~\ref{main}.

\subsection{Strategy for Theorem \ref{withramification}}
Our proof of Theorem \ref{withramification}
builds off of ideas used in
\cite{CaLa:22} to compute the Chow ring of $\sH_{3,g}$, which we briefly recall.  For consistency with later notation, we replace $\sH_{3,g}$ and $\sH_{3,g}(\mu)$ with slightly different spaces $\H_{3,g}$ and $\H_{3,g}(\mu)$; explicitly, whereas $\sH_{3,g}$ and $\sH_{3,g}(\mu)$ are quotients of corresponding parameterized Hurwitz spaces by an action of $\PGL_2$, the spaces $\H_{3,g}$ and $\H_{3,g}(\mu)$ are quotients by $\SL_2$.  There are maps
\[\H_{3,g} \rightarrow \sH_{3,g} \;\;\; \text{ and } \;\;\; \H_{3,g}(\mu) \rightarrow \sH_{3,g}(\mu)\]
that are $\mu_2$-gerbes and hence induce isomorphisms on rational Chow rings.  Therefore, for our calculations of rational Chow rings, the distinction between the two spaces is irrelevant; the technical reason why we prefer to work with the latter is explained in Section~\ref{hprime} below.

A geometric point of $\H_{3,g}$ is given by  a degree-$3$ cover $\alpha: C \to \pp^1$.  There is a naturally associated rank-$2$ vector bundle $E_\alpha := (\alpha_*\O_C/\O_{\pp^1})^\vee$ on $\pp^1$, sometimes called the \emph{Tschirnhausen bundle}. The degree of $E_\alpha$ is $g+2$, and there is a natural embedding $C \hookrightarrow \pp E^\vee_\alpha$ via which $C$ can be viewed as the vanishing of a section
\[f \in H^0(\P E_{\alpha}^{\vee}, \gamma^*\det(E_{\alpha}^{\vee}) \otimes \O_{\P E_{\alpha}^{\vee}}(3)),\]
where $\gamma: \P E_{\alpha}^{\vee} \rightarrow \P^1$ denotes the bundle projection.  From this perspective, we have $\alpha = \gamma|_C$, and therefore the data of the degree-$3$ cover $\alpha: C \rightarrow \P^1$ can be recovered from the data of the bundle $E_\alpha$ and the section $f$.  The space of such sections forms a vector bundle $\U$ over an open substack $\B$ of the moduli space $\B_{2,g+2}$ of rank-2, degree-$(g+2)$ vector bundles on $\P^1$-bundles that arise as the projectivization of a rank-2 vector bundle with trivial determinant. Moreover, $\H_{3,g}$ is the open substack of $\U$ on which the vanishing of $f$ is a smooth curve and $\alpha$ has only simple ramification.  Summarizing, we have
\begin{equation*}
\begin{tikzcd}
\H_{3,g} \arrow{r}[swap]{\text{open}} & \U \arrow{d}{\substack{\text{vector}\\ \text{bundle}}} & \\
& \B \arrow{r}[swap]{\text{open}} & \B_{2,g+2},
\end{tikzcd}
\end{equation*}
from which it follows that the Chow ring of $\H_{3,g}$ is generated by pullbacks of classes in the Chow ring of $\B_{2,g+2}$, the latter of which is well-understood by \cite{Lar:23}. The complement of $\H_{3,g} \subseteq \U$ has two components: $\Delta$, consisting of covers with singular source curves, and $T$, consisting of covers with a point of triple ramification (or worse).  By excision, computing $A^*(\H_{3,g})$ is then reduced to computing the image of the pushforward map $A^{*-1}(\Delta \cup T) \to A^*(\U) \cong A^*(\B)$, which was carried out explicitly in \cite{CaLa:22}.

The starting insight of this paper is that we can employ a similar strategy for Hurwitz spaces with marked ramification $\H_{3,g}(\mu)$ by realizing each $\H_{3,g}(\mu)$ as an open substack of a vector bundle $V_\mu$ over an appropriate stack $X_\mu$. In each case, the $X_\mu$ we construct admits a map to $\B$, which we use to obtain a presentation of $A^*(X_\mu)$.  Roughly speaking, $X_\mu$ helps us track the locations of the marked points and $V_\mu$ is the analogue of $\U$ above, parameterizing equations on $\P E^{\vee}$ that cut out a curve for which the map to $\P^1$ has the prescribed ramification at the markings.  These spaces sit in a diagram as below, where the top horizontal map is only defined when $\mu \neq (3)$.
\begin{equation} \label{basic}
\begin{tikzcd}
\H_{3,g} \arrow{d}[swap]{\text{open}} & & \arrow[dashed,swap]{ll}{\text{if $\mu \neq (3)$}} \H_{3,g}(\mu) \arrow{d}{\text{open}} \\
\U \arrow{d}[swap]{\text{vector bundle}} & \U \times_{\B} X_\mu \arrow{d} \arrow{l} & V_\mu  \arrow{l}[swap]{\text{subbundle}} \arrow{dl}{\text{vector bundle}} \\
\B & \arrow{l} X_\mu
\end{tikzcd}
\end{equation}

Since $A^*(\H_{3,g}(\mu))$ is a quotient of $A^*(V_\mu) \cong A^*(X_\mu)$, we thus obtain generators for the Chow ring.  To find the relations, we must study the complement of $\H_{3,g}(\mu)\subseteq V_{\mu}$.  This complement often has several components, which we study using the tools of relative principal parts bundles.  In the cases $\mu = (1,1,1)$ and $\mu = (2,1)$, there is a natural map $\H_{3,g}(\mu) \to \H_{3,g}$ (in fact the top rectangle of \eqref{basic} is a fiber square). It follows that all classes in the kernel of $A^*(\B) \to A^*(\H_{3,g})$ lie in the kernel of $A^*(\B) \to A^*(\H_{3,g}(\mu))$, which simplifies some of our calculations.  The case $\mu = (3)$ is more challenging since there is no such map.  In this case, we have marked a triple ramification point, but we still need to excise covers with triple ramification \emph{elsewhere}. Finding the class of this component of the complement of $\H_{3,g}(\mu) \subseteq V_\mu$ involves a delicate excess intersection calculation.

\subsection{Future directions}

Our proof of Theorem \ref{main} takes advantage of the fact that the boundary divisors of $\overline{\mathscr{H}}_{3,g}$ are finite group quotients of products of Hurwitz spaces parametrizing covers having just one fiber with marked ramification.
When there is just one fiber with marked ramification, each of these spaces can be realized as an open substack of a vector bundle over a well-understood stack, where this vector bundle arises as the kernel of a surjective evaluation map (Section \ref{subsec:principalparts}).
If we try to apply similar methods to study the higher-codimension Chow groups, Hurwitz spaces parameterizing covers with more than one marked fiber necessarily appear. We suspect that when the genus is large compared to the number of fibers with marked ramification, then the appropriate analogues of the evaluation maps used in Section \ref{sec:111}, \ref{sec:21} and \ref{sec:3} are still surjective. However, when the genus is small it appears different arguments may be needed. These low-genus cases will show up as factors in the boundary strata even in higher genus, so they would need to be addressed in any attempt at extending Theorem \ref{main} to higher codimension.

When the degree of the cover is greater than $3$, other difficulties arise.
First we point out that, as discussed in Remark~\ref{rmk:twisted}, the Harris-Mumford moduli space of admissible $k$-covers is no longer normal for $k \geq 4$,
so we must instead consider its normalization, the stack of twisted stable maps into the classifying space $B\mathrm{S_k}$.
For covers of degree $4$ and $5$,
there may be hope of applying similar techniques using the Casnati--Ekedahl structure theorems to relate Hurwitz spaces with marked ramification to better-understood moduli stacks.  However, even for these degrees,
the combinatorial possibilities for the boundary strata increase and, as discussed in Remark~\ref{rem:k>3}, the boundary strata are not necessarily finite group quotients of products of Hurwitz spaces with marked ramification. Although they may be realized as fiber products of such, there is no Chow-K\"unneth generation Property for fiber products in general.

\subsection*{Plan of the paper}
The structure of the paper is the following.
In Section ~\ref{background}, we define Hurwitz spaces that parametrize possibly disconnected covers with marked ramification, and we examine their compactifications.
This allows us to characterize the codimension-1 boundary strata of $\sHbar_{3,g}$.
In Section ~\ref{tools}, we review some properties of Chow groups.
We present a specific formulation of excess intersection which we use in Section ~\ref{sec:3} and also recall the Chow-K\"unneth generation Property.
In addition, we discuss principal parts bundles and evaluation maps, which are the tools that allow us to realize these Hurwitz spaces with marked ramification as open substacks of vector bundles.
In Section ~\ref{previous}, we review previous results on the Chow rings of Hurwitz spaces in degrees 2 and 3.
Here, we explain the technicalities of switching from $\sH_{3,g}(\mu)$ to $\H_{3,g}(\mu)$.
Finally, we calculate relations in $A^*(\H_{3,g}(1,1,1))$ (Section ~\ref{sec:111}), $A^*(\H_{3,g}(2,1))$ (Section ~\ref{sec:21}), and $A^*(\H_{3,g}(3))$ (Section ~\ref{sec:3}) to show that they all have trivial Chow ring.

\subsection*{Acknowledgments} The authors would like to thank the organizers of the 2024 Women in Algebraic Geometry collaborative workshop, and the Institute for Advanced Study for graciously hosting this workshop.
We especially thank Rohini Ramadas for valuable discussions during the workshop.
E.C. was supported by NSF CAREER DMS--2137060.
H.L. was supported by a Clay Research Fellowship.
A.L. was supported in part by NSF DMS–2302475.
R.L. was supported in part by a Simons Collaboration Grant.

\section{Background on Hurwitz spaces}\label{background}

In this section, we explain the relevant background on Hurwitz spaces both with and without marked ramification.

\subsection{Hurwitz spaces}

The Hurwitz space $\sH_{k,g}$ parameterizes maps $\alpha: C \rightarrow \P^1$, where $C$ is a smooth curve of genus $g$ and $\alpha$ is a simply-branched degree-$k$ cover.  That is, there is a finite set $\{y_1, \ldots, y_b\} \subseteq \P^1$ of {\it branch points} of $\alpha$ such that the restriction of $\alpha$ to $\alpha^{-1}(\P^1\setminus \{b_1, \ldots, b_k\})$ is a $k$-sheeted cover and exactly two of these sheets come together over each $y_i$.  A quick calculation with the Riemann--Hurwitz formula shows that
\begin{equation}
    \label{eq:b}
    b = 2g -2 + 2k.
\end{equation}
We will wish to ensure that the target $\P^1$ is stable when marked at the branch points, so we require that
\[2g + 2k \geq 5\]
so that $b \geq 3$.  To put the definition of $\sH_{k,g}$ somewhat more precisely in families, we make the following definition.

\begin{definition}
\label{def:openHurwitz}
Fix $g \geq 0$ and $k \geq 2$ with $2g + 2k \geq 5$.  Define $\sH_{k,g}$ to be the algebraic stack with objects over a scheme $S$ given by diagrams
\begin{equation}
\label{eq:Hurwitzdiagram}
\begin{tikzcd}
C \arrow[rr, "\alpha"] \arrow[rd] &   & P \arrow[ld] \\
                        & S, &
\end{tikzcd}
\end{equation}
in which $P$ and $C$ are connected smooth curves over $S$ of genus zero and $g$, respectively, and $\alpha$ is a simply-ramified degree-$k$ cover.  Morphisms from $(\alpha: C \rightarrow P)$ to $(\alpha': C' \rightarrow P')$ are commuting diagrams
\[
\begin{tikzcd}
    C \arrow[r, "\phi"] \arrow[d, "\alpha"] & C'\arrow[d, "\alpha'"] \\ P \arrow[r,"\sigma"] & P'.
\end{tikzcd}
\]
\end{definition}

  In order to compactify $\sH_{k,g}$, we allow the source curve to degenerate to a nodal curve so long as the target curve degenerates correspondingly.  More precisely, the definition is as follows.

\begin{definition}
\label{def:compactHurwitz}
Fix $g \geq 0$ and $k \geq 2$ with $2g + 2k \geq 5$.  Define $\sHbar_{k,g}$ to be the algebraic stack with objects over a scheme $S$ given by diagrams as in \eqref{eq:Hurwitzdiagram}, where $P$ and $C$ are now connected nodal curves over $S$ of genus zero and $g$, respectively, and, denoting by $P_{\text{ns}}$ and $P_{\text{sing}}$ the nonsingular and singular loci of $P$ over $S$ (and similarly for $C$), we require the following:
\begin{itemize}
\item $\alpha^{-1}(P_{\text{ns}}) = C_{\text{ns}}$, and the restriction of $\alpha$ to this set is a simply-ramified degree-$k$ cover;
\item the target curve $P$, when marked at the smooth branch locus of $\alpha$, is stable;
\item $\alpha^{-1}(P_{\text{sing}}) = C_{\text{sing}}$, and the two branches of any node of $C$ map to $P$ with the same ramification, not necessarily simple.
\end{itemize}
Morphisms between objects in $\sHbar_{k,g}$ are as in Definition~\ref{def:openHurwitz}
\end{definition}

\begin{remark}\label{rmk:twisted}
Readers familiar with admissible covers will recognize the conditions in Definition~\ref{def:compactHurwitz}; indeed, the only difference between the moduli space $\sHbar_{k,g}$ and the Harris--Mumford moduli space $\text{Adm}_{k,g}$ of admissible covers as in \cite{HaMu:82} is that the branch divisor is marked in the latter, so $\sHbar_{k,g}$ is the quotient of $\text{Adm}_{k,g}$ by the $S_b$-action permuting the branch points (where $b$ is determined from $k$ and $g$ by \eqref{eq:b}).  It is worth noting that $\text{Adm}_{k,g}$ is not normal for $k \geq 4$, so in some settings, it is preferable to replace it with the moduli space of {\it twisted admissible covers} as in \cite{ACV:03}, which is the normalization of $\text{Adm}_{k,g}$.  Since the main results of this paper are all for $k \leq 3$, in which Harris--Mumford admissible covers and twisted admissible covers agree, we do not discuss this distinction here.
\end{remark}

In addition to $\sH_{k,g}$, in what follows we will also need the larger space
\[\sH'_{k,g} \supseteq \sH_{k,g}\]
parameterizing covers as in Definition \ref{def:openHurwitz} but in which the ramification is not required to be simple.  This space has a natural stratification according to which ramification profiles occur.  Here, for a partition $\mu = (\mu_1, \ldots, \mu_\ell)$ of $k$, we say that $\alpha: C \rightarrow \P^1$ has {\it ramification profile} $\mu$ over $q \in \P^1$ if there exist points $p_1, \ldots, p_\ell \in C$ such that
\[\alpha^{-1}(q) = \sum_{i=1}^\ell \mu_i p_i\]
as divisors on $C$; simple ramification corresponds to the case $\mu = (2,1,\ldots, 1)$.  The complement $\sH'_{k,g} \setminus \sH_{k,g}$ is the union of the divisor $T$ corresponding to curves with ramification profile $(3,1,\ldots, 1)$ or worse, and the divisor $D$ corresponding to curves with ramification profile $(2,2,1, \ldots, 1)$ or worse (the latter of which is empty for $k \leq 3$).

Even when considering the case of simply-ramified covers, in the compactification $\sHbar_{k,g}$, non-simple ramification can occur at nodes.  Thus, to describe the boundary stratification of $\sHbar_{k,g}$, we introduce Hurwitz spaces that allow for non-simple ramification at prescribed points.

\subsection{Hurwitz spaces with marked ramification}

Hurwitz spaces with marked ramification parameterize possibly disconnected covers of $\pp^1$ with specified ramification behavior above a fixed point on $\pp^1$.  For a cover with $r$ components, we specify the degree and genus of each component, as well as the desired ramification behavior. We encode this numerical data with a partition $\underline{k} = (k_1, \ldots, k_r)$ of $k$, a tuple $\underline{g} = (g_1, \ldots, g_r)$ of non-negative integers, and a collection $\underline{\mu} = (\mu_1, \ldots, \mu_r)$ where $\mu_i = (\mu_i^1, \ldots, \mu_i^{\ell_i})$ is a partition of $k_i$.  Additionally, if $k_i = 1$, we require that $g_i = 0$, since if a cover has degree $1$ on some component it must be an isomorphism onto $\P^1$.

\begin{definition}
\label{def:markedHurwitz}
Given $\underline{k}, \underline{g}$ and $\underline{\mu}$ as above, we define $\sH_{\underline{k}, \underline{g}}(\underline{\mu})$ to be the algebraic stack whose objects over a scheme $S$ are given by tuples of diagrams
\begin{equation}
\label{eq:Hurwitzspacediagram}
\begin{tikzcd}
C_i \arrow{dr} \arrow{rr}{\alpha_i} && P \arrow{dl} \\
& S \arrow[bend left=30]{ul}{p_{i}^j} \arrow[bend right = 30, swap]{ur}{q}
\end{tikzcd}
\end{equation}
for $i =1, \ldots, r$, and $j=1,\ldots, \ell_i$ satisfying the following:
\begin{itemize}
    \item (Pointed target) $P \to S$ is a smooth genus-zero curve over $S$ with a section $q: S \to P$.
    \item (Components of the source) $C_i \to S$ is a smooth genus-$g_i$ curve over $S$ for each $i = 1, \ldots, r$, equipped with a finite, flat, degree-$k_i$ map $\alpha_i: C_i \to P$ such that the inner triangle of \eqref{eq:Hurwitzspacediagram} commutes and $\alpha_i$ is simply-branched away from $Q := q(S) \subseteq P$. Moreover, the branch divisors of all $\alpha_i$ are disjoint on the complement of $Q$.
    \item (Ramification at the markings) $p_{i}^j: S \to C_i$ are sections of $C_i \rightarrow S$ for each $i =1, \ldots, r$ and $j=1, \ldots, \ell_i$ such that the outer triangle of \eqref{eq:Hurwitzdiagram} commutes and the following equality of divisors holds in $C_i$:
\[\alpha_i^{-1}(Q) = \sum_{j=1}^{\ell_i} \mu_i^j P_{i}^j,\]
    where $P_{i}^j = p_{i}^j(S)$ is the image of the section.
    \item (Stability) The union of the supports of the branch divisors of the $\alpha_i$ contains at least two distinct points away from $Q$ in each fiber.
    By Riemann--Hurwitz, this is equivalent to the condition
    \begin{equation} \label{num} \sum_{i=1}^r \left(2g_i - 2 + 2k_i - \sum_{j=1}^{\ell_i} (\mu_i^j -1)\right) \geq 2.\end{equation}
\end{itemize}
A morphism in
$\sH_{\underline{k}, \underline{g}}(\underline{\mu})$ from an object $(P, q, \{C_i\}, \{\alpha_i\}, \{p^j_i\})$
to an object $(P', q', \{C'_i\}, \{\alpha'_i\}, \{{p'}^j_i\})$ over $S$ is
given by the data of morphisms $C_i \to C_i'$ and $P \to P'$ such that everything in the diagram below commutes.
\begin{center}
\begin{tikzcd}
C_i \arrow{d}[swap]{\alpha_i} \arrow{rr} & & C_i' \arrow{d}{\alpha_i'} \\
P \arrow{dr} \arrow{rr} & & P' \arrow{dl} \\
& S \arrow[bend left = 20]{ul}{q} \arrow[bend left = 100]{uul}{p_i^j}
\arrow[bend right = 20, swap]{ur}{q'} \arrow[bend right = 100, swap]{uur}{{p'}_i^j}
\end{tikzcd}
\end{center}
The stability condition ensures that the stabilizers are finite.  If $r = 1$ (meaning the cover is connected), then we write $\sH_{k,g}(\mu)$ instead of $\sH_{\underline{k}, \underline{g}}(\underline{\mu})$.
\end{definition}

Several remarks about this definition are in order.

\begin{remark}
\label{rem:disconnected_to_connected}
When studying degree-3 covers, because of the stability condition, only the cases $r = 1$ and $r = 2$ are possible. (Indeed, if $r = 3$, then $\underline{k} = (1,1,1)$ so $\underline{g} = (0, 0, 0)$ and $\underline{\mu} = ((1),(1),(1))$, which violates \eqref{num}.)
Moreover, there are natural equivalences that show each of the $r = 2$ spaces in degree $3$ is equivalent to an $r = 1$ space in degree $2$:
\[\sH_{(2,1),(g,0)}((2),(1)) \cong \sH_{2,g}((2))
\qquad \text{and} \qquad
\sH_{(2,1),(g,0)}((1,1),(1)) \cong \sH_{2,g}((1,1)).
\]
The first equivalence above is defined by sending $(P,q, (C_1, C_2), (\alpha_1, \alpha_2), p^1_1, p^1_2)$ to $(P, q, C_1,\alpha_1, p^1_1)$, with inverse  given by sending $(P, q, C_1, \alpha_1, p^1_1)$ to $(P, q, (C_1, P), (\alpha_1, \mathrm{id}), p^1_1,q)$. One then checks that the composition of these functors is equivalent to the identity, by using the facts that $\alpha_2: C_2 \to P$ is an isomorphism (since it is degree one) and that $q = \alpha_2 \circ p^1_2$. The second equivalence above is proved similarly.  In spite of these equivalences, it is helpful, when using the Hurwitz spaces with marked ramification to describe the boundary divisors of $\sHbar_{3,g}$, that we view elements as disconnected covers, since the preimage of a component of the target curve $P$ in an element of $\sHbar_{3,g}$ can certainly be disconnected.
\end{remark}

\begin{remark}
\label{rem:k>3}
On the other hand, for $k \geq 4$, not all  all Hurwitz spaces $\sH_{\underline{k},\underline{g}}(\underline{\mu})$ are equivalent to products of Hurwitz spaces with $r = 1$.
For example, $\sH_{(2,2),(1,3)}((2),(2))$ is not equivalent to $\sH_{2,1}((2)) \times \sH_{2,3}((2))$. The reason is that the different components of the source are covers of the \emph{same} genus-zero curve in $\sH_{(2,2),(1,3)}((2),(2))$, whereas
$\sH_{2,1}((2)) \times \sH_{2,3}((2))$ parameterizes covers of possibly different genus-zero curves.
\end{remark}

\begin{remark}
In the case where $r=1$ and $\mu = (1, \ldots, 1)$, the condition of Definition~\ref{def:markedHurwitz} says that $\alpha$ is unramified over $q$, so ``no ramification'' is a valid ramification profile.  Still, in this case, there is additional information in $\sH_{k,g}(\mu)$ beyond what is parameterized by $\sH_{k,g}$, given by a labeling of the fiber of $\alpha$ over $q$.
\end{remark}

Similarly to $\sH'_{k,g}$, we will also sometimes require the larger space
\[\sH'_{\underline{k},\underline{g}}(\underline{\mu}) \supseteq \sH_{\underline{k},\underline{g}}(\underline{\mu})\]
parameterizing the same objects as above, but in which $\alpha$ is not required to have simple ramification away from the markings.

\subsection{Boundary strata}\label{bdivs}

The key reason for our interest in the spaces $\sH_{\underline{k},\underline{g}}(\underline{\mu})$ is the role they play in describing the codimension-$1$ boundary strata on $\sHbar_{k,g}$.  For this discussion, we restrict to the case $k=3$ (which is all that is required for the present work), as certain complications arise when $k \geq 4$ due to the issue mentioned in Remark~\ref{rem:k>3}.

There is a ramified cover
\begin{equation}
    \label{eq:maptoMbar}
    \sHbar_{3,g} \rightarrow \Mbar_{0,b}/S_b
\end{equation}
(where, by \eqref{eq:b}, we have $b = 2g+4$) sending a cover $\alpha: C \rightarrow P$ to the target curve marked at the smooth branch points of $\alpha$.  A (locally closed) {\it codimension-$1$ boundary stratum} of $\Mbar_{0,b}/S_b$ refers to a subset of the form
\[D_j \subseteq \Mbar_{0,b}/S_b\]
for some integer $2 \leq j \leq b-2$, where $D_j$ is the locus of genus-zero curves with precisely two components, one with $j$ marked points and the other with $b-j$ marked points. Note that $D_j = D_{b-j}$. The preimage of $D_j$ under the map \eqref{eq:maptoMbar} may have multiple connected components, and a {\it codimension-$1$ boundary stratum} of $\sHbar_{3,g}$ refers to any of these components.

Let us consider a specific example before describing the codimension-$1$ boundary strata of $\sHbar_{3,g}$ in general.

\begin{example}
\label{ex:BD}
Let $g=4$, so that $b=12$.  We calculate the preimage in $\sHbar_{3,4}$ of the codimension-$1$ boundary stratum $D_7$ in $\Mbar_{0,12}/S_{12}$.  A curve $P$ in $D_7$ has two components $P_1$ and $P_2$, where $P_1$ has seven marked points and $P_2$ has five.

An element of the preimage of $P$ is a map $\alpha: C \rightarrow P$.  If $\alpha^{-1}(P_1)$ is connected, then applying the Riemann--Hurwitz formula to the restriction of $\alpha$ to this component (and recalling that all seven of the smooth branch points of $P_1$ are simply-ramified) shows that the genus $g_1$ of the preimage of $P_1$ satisfies
\begin{equation}
    \label{eq:RHexample}
2g_1 - 2 = 3 \cdot (-2) + 7 + \sum_{i=1}^\ell (\mu_i-1),
\end{equation}
where $\mu = (\mu_1, \ldots, \mu_\ell)$ is the ramification profile over the node of $P$.  Simplifying shows that $\sum (\mu_i - 1)$ is odd, and since $\mu$ is a partition of $3$, this is only possible if $\mu = (2,1)$.  From here, solving equation \eqref{eq:RHexample} shows $g_1 = 2$.  A similar calculation shows that, if $\alpha^{-1}(P_2)$ is connected, then the ramification profile over the node must also be $\mu=(2,1)$ and its genus must be $g_2 =1$.

Since $C$ is connected, it is straightforward to check in this case that at least one of $\alpha^{-1}(P_1)$ or $\alpha^{-1}(P_2)$ must be connected; thus, if $\alpha^{-1}(P_i)$ is disconnected, then it consists of a component mapping with degree $1$ and a component mapping with degree $2$.  From here, one sees that there are three possible geometries of the curve $C$, depending on whether  only $\alpha^{-1}(P_1)$, only $\alpha^{-1}(P_2)$, or both are connected (here, the labels on each half-node show the ramification indices):

\vspace{0.25cm}

\tikzset{every picture/.style={line width=0.75pt}} 
\begin{center}
\begin{minipage}{0.3\linewidth}
\begin{tikzpicture}[x=0.75pt,y=0.75pt,yscale=-0.7,xscale=0.7]

\draw   (166,88.8) .. controls (180,77.8) and (201,75.8) .. (224,75.8) .. controls (247,75.8) and (265,97.8) .. (249,111.8) .. controls (233,125.8) and (224,123.8) .. (224,140.8) .. controls (224,157.8) and (239,167.8) .. (250,177.8) .. controls (261,187.8) and (254,209.8) .. (229,217.8) .. controls (204,225.8) and (170,211.61) .. (158,194.8) .. controls (146,178) and (138.14,169.07) .. (138,147.8) .. controls (137.86,126.54) and (152,99.8) .. (166,88.8) -- cycle ;
\draw  [draw opacity=0] (207.51,124.31) .. controls (207.51,124.31) and (207.51,124.31) .. (207.51,124.31) .. controls (207.51,124.31) and (207.51,124.31) .. (207.51,124.31) .. controls (207.51,130) and (198.92,134.61) .. (188.31,134.61) .. controls (177.71,134.61) and (169.11,130) .. (169.11,124.31) .. controls (169.11,123.74) and (169.2,123.19) .. (169.36,122.65) -- (188.31,124.31) -- cycle ; \draw   (207.51,124.31) .. controls (207.51,124.31) and (207.51,124.31) .. (207.51,124.31) .. controls (207.51,124.31) and (207.51,124.31) .. (207.51,124.31) .. controls (207.51,130) and (198.92,134.61) .. (188.31,134.61) .. controls (177.71,134.61) and (169.11,130) .. (169.11,124.31) .. controls (169.11,123.74) and (169.2,123.19) .. (169.36,122.65) ;
\draw  [draw opacity=0] (171.06,128.85) .. controls (175.45,125.64) and (181.86,123.61) .. (189,123.61) .. controls (195.47,123.61) and (201.34,125.28) .. (205.65,127.98) -- (189,139.21) -- cycle ; \draw   (171.06,128.85) .. controls (175.45,125.64) and (181.86,123.61) .. (189,123.61) .. controls (195.47,123.61) and (201.34,125.28) .. (205.65,127.98) ;

\draw  [draw opacity=0] (207.51,167.31) .. controls (207.51,167.31) and (207.51,167.31) .. (207.51,167.31) .. controls (207.51,167.31) and (207.51,167.31) .. (207.51,167.31) .. controls (207.51,173) and (198.92,177.61) .. (188.31,177.61) .. controls (177.71,177.61) and (169.11,173) .. (169.11,167.31) .. controls (169.11,166.74) and (169.2,166.19) .. (169.36,165.65) -- (188.31,167.31) -- cycle ; \draw   (207.51,167.31) .. controls (207.51,167.31) and (207.51,167.31) .. (207.51,167.31) .. controls (207.51,167.31) and (207.51,167.31) .. (207.51,167.31) .. controls (207.51,173) and (198.92,177.61) .. (188.31,177.61) .. controls (177.71,177.61) and (169.11,173) .. (169.11,167.31) .. controls (169.11,166.74) and (169.2,166.19) .. (169.36,165.65) ;
\draw  [draw opacity=0] (171.06,171.85) .. controls (175.45,168.64) and (181.86,166.61) .. (189,166.61) .. controls (195.47,166.61) and (201.34,168.28) .. (205.65,170.98) -- (189,182.21) -- cycle ; \draw   (171.06,171.85) .. controls (175.45,168.64) and (181.86,166.61) .. (189,166.61) .. controls (195.47,166.61) and (201.34,168.28) .. (205.65,170.98) ;

\draw   (255,191.8) .. controls (255,174.68) and (282.09,160.8) .. (315.5,160.8) .. controls (348.91,160.8) and (376,174.68) .. (376,191.8) .. controls (376,208.93) and (348.91,222.8) .. (315.5,222.8) .. controls (282.09,222.8) and (255,208.93) .. (255,191.8) -- cycle ;
\draw   (255,101.8) .. controls (255,84.68) and (282.09,70.8) .. (315.5,70.8) .. controls (348.91,70.8) and (376,84.68) .. (376,101.8) .. controls (376,118.93) and (348.91,132.8) .. (315.5,132.8) .. controls (282.09,132.8) and (255,118.93) .. (255,101.8) -- cycle ;
\draw  [draw opacity=0] (318.51,188.31) .. controls (318.51,188.31) and (318.51,188.31) .. (318.51,188.31) .. controls (318.51,188.31) and (318.51,188.31) .. (318.51,188.31) .. controls (318.51,194) and (309.92,198.61) .. (299.31,198.61) .. controls (288.71,198.61) and (280.11,194) .. (280.11,188.31) .. controls (280.11,187.74) and (280.2,187.19) .. (280.36,186.65) -- (299.31,188.31) -- cycle ; \draw   (318.51,188.31) .. controls (318.51,188.31) and (318.51,188.31) .. (318.51,188.31) .. controls (318.51,188.31) and (318.51,188.31) .. (318.51,188.31) .. controls (318.51,194) and (309.92,198.61) .. (299.31,198.61) .. controls (288.71,198.61) and (280.11,194) .. (280.11,188.31) .. controls (280.11,187.74) and (280.2,187.19) .. (280.36,186.65) ;
\draw  [draw opacity=0] (282.06,192.85) .. controls (286.45,189.64) and (292.86,187.61) .. (300,187.61) .. controls (306.47,187.61) and (312.34,189.28) .. (316.65,191.98) -- (300,203.21) -- cycle ; \draw   (282.06,192.85) .. controls (286.45,189.64) and (292.86,187.61) .. (300,187.61) .. controls (306.47,187.61) and (312.34,189.28) .. (316.65,191.98) ;

\draw  [draw opacity=0] (366.51,189.31) .. controls (366.51,189.31) and (366.51,189.31) .. (366.51,189.31) .. controls (366.51,189.31) and (366.51,189.31) .. (366.51,189.31) .. controls (366.51,195) and (357.92,199.61) .. (347.31,199.61) .. controls (336.71,199.61) and (328.11,195) .. (328.11,189.31) .. controls (328.11,188.74) and (328.2,188.19) .. (328.36,187.65) -- (347.31,189.31) -- cycle ; \draw   (366.51,189.31) .. controls (366.51,189.31) and (366.51,189.31) .. (366.51,189.31) .. controls (366.51,189.31) and (366.51,189.31) .. (366.51,189.31) .. controls (366.51,195) and (357.92,199.61) .. (347.31,199.61) .. controls (336.71,199.61) and (328.11,195) .. (328.11,189.31) .. controls (328.11,188.74) and (328.2,188.19) .. (328.36,187.65) ;
\draw  [draw opacity=0] (330.06,193.85) .. controls (334.45,190.64) and (340.86,188.61) .. (348,188.61) .. controls (354.47,188.61) and (360.34,190.28) .. (364.65,192.98) -- (348,204.21) -- cycle ; \draw   (330.06,193.85) .. controls (334.45,190.64) and (340.86,188.61) .. (348,188.61) .. controls (354.47,188.61) and (360.34,190.28) .. (364.65,192.98) ;

\draw (239,89) node [anchor=north west][inner sep=0.75pt]   [align=left] {1};
\draw (237,182) node [anchor=north west][inner sep=0.75pt]   [align=left] {2};
\draw (261,183) node [anchor=north west][inner sep=0.75pt]   [align=left] {2};
\draw (262,90) node [anchor=north west][inner sep=0.75pt]   [align=left] {1};
\end{tikzpicture}
\end{minipage}
\begin{minipage}{0.3\linewidth}
\begin{tikzpicture}[x=0.75pt,y=0.75pt,yscale=-0.7,xscale=0.7]

\draw   (296,96.59) .. controls (296,81.59) and (327,74.59) .. (346,78.59) .. controls (365,82.59) and (373.46,84.1) .. (385,94.59) .. controls (396.54,105.09) and (405.45,129.15) .. (406,149.59) .. controls (406.55,170.04) and (401.96,182.26) .. (391,196.59) .. controls (380.04,210.93) and (379,211.59) .. (361,216.59) .. controls (343,221.59) and (297,211.59) .. (296,193.59) .. controls (295,175.59) and (304,182.59) .. (317,166.59) .. controls (330,150.59) and (331,131.59) .. (322,124.59) .. controls (313,117.59) and (296,111.59) .. (296,96.59) -- cycle ;
\draw  [draw opacity=0] (381.51,144.1) .. controls (381.51,144.1) and (381.51,144.1) .. (381.51,144.1) .. controls (381.51,144.1) and (381.51,144.1) .. (381.51,144.1) .. controls (381.51,149.79) and (372.92,154.4) .. (362.31,154.4) .. controls (351.71,154.4) and (343.11,149.79) .. (343.11,144.1) .. controls (343.11,143.53) and (343.2,142.98) .. (343.36,142.44) -- (362.31,144.1) -- cycle ; \draw   (381.51,144.1) .. controls (381.51,144.1) and (381.51,144.1) .. (381.51,144.1) .. controls (381.51,144.1) and (381.51,144.1) .. (381.51,144.1) .. controls (381.51,149.79) and (372.92,154.4) .. (362.31,154.4) .. controls (351.71,154.4) and (343.11,149.79) .. (343.11,144.1) .. controls (343.11,143.53) and (343.2,142.98) .. (343.36,142.44) ;
\draw  [draw opacity=0] (345.06,148.64) .. controls (349.45,145.43) and (355.86,143.41) .. (363,143.41) .. controls (369.47,143.41) and (375.34,145.07) .. (379.65,147.77) -- (363,159) -- cycle ; \draw   (345.06,148.64) .. controls (349.45,145.43) and (355.86,143.41) .. (363,143.41) .. controls (369.47,143.41) and (375.34,145.07) .. (379.65,147.77) ;

\draw   (172,190) .. controls (172,170.67) and (199.76,155) .. (234,155) .. controls (268.24,155) and (296,170.67) .. (296,190) .. controls (296,209.33) and (268.24,225) .. (234,225) .. controls (199.76,225) and (172,209.33) .. (172,190) -- cycle ;
\draw   (170,99.59) .. controls (170,83.58) and (198.21,70.59) .. (233,70.59) .. controls (267.79,70.59) and (296,83.58) .. (296,99.59) .. controls (296,115.61) and (267.79,128.59) .. (233,128.59) .. controls (198.21,128.59) and (170,115.61) .. (170,99.59) -- cycle ;
\draw  [draw opacity=0] (272.51,195.1) .. controls (272.51,195.1) and (272.51,195.1) .. (272.51,195.1) .. controls (272.51,195.1) and (272.51,195.1) .. (272.51,195.1) .. controls (272.51,200.79) and (263.92,205.4) .. (253.31,205.4) .. controls (242.71,205.4) and (234.11,200.79) .. (234.11,195.1) .. controls (234.11,194.53) and (234.2,193.98) .. (234.36,193.44) -- (253.31,195.1) -- cycle ; \draw   (272.51,195.1) .. controls (272.51,195.1) and (272.51,195.1) .. (272.51,195.1) .. controls (272.51,195.1) and (272.51,195.1) .. (272.51,195.1) .. controls (272.51,200.79) and (263.92,205.4) .. (253.31,205.4) .. controls (242.71,205.4) and (234.11,200.79) .. (234.11,195.1) .. controls (234.11,194.53) and (234.2,193.98) .. (234.36,193.44) ;
\draw  [draw opacity=0] (236.06,199.64) .. controls (240.45,196.43) and (246.86,194.41) .. (254,194.41) .. controls (260.47,194.41) and (266.34,196.07) .. (270.65,198.77) -- (254,210) -- cycle ; \draw   (236.06,199.64) .. controls (240.45,196.43) and (246.86,194.41) .. (254,194.41) .. controls (260.47,194.41) and (266.34,196.07) .. (270.65,198.77) ;

\draw  [draw opacity=0] (228.51,175.1) .. controls (228.51,175.1) and (228.51,175.1) .. (228.51,175.1) .. controls (228.51,175.1) and (228.51,175.1) .. (228.51,175.1) .. controls (228.51,180.79) and (219.92,185.4) .. (209.31,185.4) .. controls (198.71,185.4) and (190.11,180.79) .. (190.11,175.1) .. controls (190.11,174.53) and (190.2,173.98) .. (190.36,173.44) -- (209.31,175.1) -- cycle ; \draw   (228.51,175.1) .. controls (228.51,175.1) and (228.51,175.1) .. (228.51,175.1) .. controls (228.51,175.1) and (228.51,175.1) .. (228.51,175.1) .. controls (228.51,180.79) and (219.92,185.4) .. (209.31,185.4) .. controls (198.71,185.4) and (190.11,180.79) .. (190.11,175.1) .. controls (190.11,174.53) and (190.2,173.98) .. (190.36,173.44) ;
\draw  [draw opacity=0] (192.06,179.64) .. controls (196.45,176.43) and (202.86,174.41) .. (210,174.41) .. controls (216.47,174.41) and (222.34,176.07) .. (226.65,178.77) -- (210,190) -- cycle ; \draw   (192.06,179.64) .. controls (196.45,176.43) and (202.86,174.41) .. (210,174.41) .. controls (216.47,174.41) and (222.34,176.07) .. (226.65,178.77) ;

\draw  [draw opacity=0] (273.51,174.1) .. controls (273.51,174.1) and (273.51,174.1) .. (273.51,174.1) .. controls (273.51,174.1) and (273.51,174.1) .. (273.51,174.1) .. controls (273.51,179.79) and (264.92,184.4) .. (254.31,184.4) .. controls (243.71,184.4) and (235.11,179.79) .. (235.11,174.1) .. controls (235.11,173.53) and (235.2,172.98) .. (235.36,172.44) -- (254.31,174.1) -- cycle ; \draw   (273.51,174.1) .. controls (273.51,174.1) and (273.51,174.1) .. (273.51,174.1) .. controls (273.51,174.1) and (273.51,174.1) .. (273.51,174.1) .. controls (273.51,179.79) and (264.92,184.4) .. (254.31,184.4) .. controls (243.71,184.4) and (235.11,179.79) .. (235.11,174.1) .. controls (235.11,173.53) and (235.2,172.98) .. (235.36,172.44) ;
\draw  [draw opacity=0] (237.06,178.64) .. controls (241.45,175.43) and (247.86,173.41) .. (255,173.41) .. controls (261.47,173.41) and (267.34,175.07) .. (271.65,177.77) -- (255,189) -- cycle ; \draw   (237.06,178.64) .. controls (241.45,175.43) and (247.86,173.41) .. (255,173.41) .. controls (261.47,173.41) and (267.34,175.07) .. (271.65,177.77) ;

\draw (302,91.79) node [anchor=north west][inner sep=0.75pt]   [align=left] {1};
\draw (302,182.79) node [anchor=north west][inner sep=0.75pt]   [align=left] {2};
\draw (275,90.79) node [anchor=north west][inner sep=0.75pt]   [align=left] {1};
\draw (279,182.79) node [anchor=north west][inner sep=0.75pt]   [align=left] {2};
\end{tikzpicture}
\end{minipage}
\begin{minipage}{0.3\linewidth}
\begin{tikzpicture}[x=0.75pt,y=0.75pt,yscale=-0.7,xscale=0.7]

\draw   (54,57.8) .. controls (68,46.8) and (89,44.8) .. (112,44.8) .. controls (135,44.8) and (153,66.8) .. (137,80.8) .. controls (121,94.8) and (112,92.8) .. (112,109.8) .. controls (112,126.8) and (127,136.8) .. (138,146.8) .. controls (149,156.8) and (142,178.8) .. (117,186.8) .. controls (92,194.8) and (58,180.61) .. (46,163.8) .. controls (34,147) and (26.14,138.07) .. (26,116.8) .. controls (25.86,95.54) and (40,68.8) .. (54,57.8) -- cycle ;
\draw   (143,63.8) .. controls (143,48.8) and (174,41.8) .. (193,45.8) .. controls (212,49.8) and (220.46,51.31) .. (232,61.8) .. controls (243.54,72.3) and (252.45,96.36) .. (253,116.8) .. controls (253.55,137.25) and (248.96,149.47) .. (238,163.8) .. controls (227.04,178.14) and (226,178.8) .. (208,183.8) .. controls (190,188.8) and (144,178.8) .. (143,160.8) .. controls (142,142.8) and (151,149.8) .. (164,133.8) .. controls (177,117.8) and (178,98.8) .. (169,91.8) .. controls (160,84.8) and (143,78.8) .. (143,63.8) -- cycle ;
\draw  [draw opacity=0] (95.51,94.31) .. controls (95.51,94.31) and (95.51,94.31) .. (95.51,94.31) .. controls (95.51,94.31) and (95.51,94.31) .. (95.51,94.31) .. controls (95.51,100) and (86.92,104.61) .. (76.31,104.61) .. controls (65.71,104.61) and (57.11,100) .. (57.11,94.31) .. controls (57.11,93.74) and (57.2,93.19) .. (57.36,92.65) -- (76.31,94.31) -- cycle ; \draw   (95.51,94.31) .. controls (95.51,94.31) and (95.51,94.31) .. (95.51,94.31) .. controls (95.51,94.31) and (95.51,94.31) .. (95.51,94.31) .. controls (95.51,100) and (86.92,104.61) .. (76.31,104.61) .. controls (65.71,104.61) and (57.11,100) .. (57.11,94.31) .. controls (57.11,93.74) and (57.2,93.19) .. (57.36,92.65) ;
\draw  [draw opacity=0] (59.06,98.85) .. controls (63.45,95.64) and (69.86,93.61) .. (77,93.61) .. controls (83.47,93.61) and (89.34,95.28) .. (93.65,97.98) -- (77,109.21) -- cycle ; \draw   (59.06,98.85) .. controls (63.45,95.64) and (69.86,93.61) .. (77,93.61) .. controls (83.47,93.61) and (89.34,95.28) .. (93.65,97.98) ;

\draw  [draw opacity=0] (95.51,136.31) .. controls (95.51,136.31) and (95.51,136.31) .. (95.51,136.31) .. controls (95.51,136.31) and (95.51,136.31) .. (95.51,136.31) .. controls (95.51,142) and (86.92,146.61) .. (76.31,146.61) .. controls (65.71,146.61) and (57.11,142) .. (57.11,136.31) .. controls (57.11,135.74) and (57.2,135.19) .. (57.36,134.65) -- (76.31,136.31) -- cycle ; \draw   (95.51,136.31) .. controls (95.51,136.31) and (95.51,136.31) .. (95.51,136.31) .. controls (95.51,136.31) and (95.51,136.31) .. (95.51,136.31) .. controls (95.51,142) and (86.92,146.61) .. (76.31,146.61) .. controls (65.71,146.61) and (57.11,142) .. (57.11,136.31) .. controls (57.11,135.74) and (57.2,135.19) .. (57.36,134.65) ;
\draw  [draw opacity=0] (59.06,140.85) .. controls (63.45,137.64) and (69.86,135.61) .. (77,135.61) .. controls (83.47,135.61) and (89.34,137.28) .. (93.65,139.98) -- (77,151.21) -- cycle ; \draw   (59.06,140.85) .. controls (63.45,137.64) and (69.86,135.61) .. (77,135.61) .. controls (83.47,135.61) and (89.34,137.28) .. (93.65,139.98) ;

\draw  [draw opacity=0] (228.51,111.31) .. controls (228.51,111.31) and (228.51,111.31) .. (228.51,111.31) .. controls (228.51,111.31) and (228.51,111.31) .. (228.51,111.31) .. controls (228.51,117) and (219.92,121.61) .. (209.31,121.61) .. controls (198.71,121.61) and (190.11,117) .. (190.11,111.31) .. controls (190.11,110.74) and (190.2,110.19) .. (190.36,109.65) -- (209.31,111.31) -- cycle ; \draw   (228.51,111.31) .. controls (228.51,111.31) and (228.51,111.31) .. (228.51,111.31) .. controls (228.51,111.31) and (228.51,111.31) .. (228.51,111.31) .. controls (228.51,117) and (219.92,121.61) .. (209.31,121.61) .. controls (198.71,121.61) and (190.11,117) .. (190.11,111.31) .. controls (190.11,110.74) and (190.2,110.19) .. (190.36,109.65) ;
\draw  [draw opacity=0] (192.06,115.85) .. controls (196.45,112.64) and (202.86,110.61) .. (210,110.61) .. controls (216.47,110.61) and (222.34,112.28) .. (226.65,114.98) -- (210,126.21) -- cycle ; \draw   (192.06,115.85) .. controls (196.45,112.64) and (202.86,110.61) .. (210,110.61) .. controls (216.47,110.61) and (222.34,112.28) .. (226.65,114.98) ;

\draw (127,59) node [anchor=north west][inner sep=0.75pt]   [align=left] {1};
\draw (125,152) node [anchor=north west][inner sep=0.75pt]   [align=left] {2};
\draw (149,59) node [anchor=north west][inner sep=0.75pt]   [align=left] {1};
\draw (149,153) node [anchor=north west][inner sep=0.75pt]   [align=left] {2};
\end{tikzpicture}
\end{minipage}
\end{center}

\vspace{0.25cm}

The subsets of $\sHbar_{3,4}$ parameterizing $\alpha: C \rightarrow P$ in which $C$ has each of these geometries are the three codimension-$1$ boundary strata lying over $D_7 \subseteq \Mbar_{0,12}/S_{12}$.  Note that the first of these strata is the image of a gluing map
\[\sH_{3,2}((2,1)) \times \sH_{(2,1),(2,0)}((2),(1)) \rightarrow \sHbar_{3,4},\]
and similarly for the other two.
\end{example}

Generalizing this example, we characterize the codimension-$1$ boundary strata of $\sHbar_{3,g}$ in the following lemma.

\begin{lemma}\label{lem:BDs}
The codimension-$1$ boundary strata in $\sHbar_{3,g}$ are the images of gluing maps from products of two moduli spaces (which may be the same) from among the following list:
\begin{itemize}
\item $\sH_{3,g'}((3))$ for $0 \leq g' \leq g$,
\item $\sH_{3,g'}((2,1))$ for $0 \leq g' \leq g$,
\item $\sH_{3,g'}((1,1,1))$ for $0 \leq g' \leq g$,
\item $\sH_{(2,1),(g',0)}((2),(1))$ for $1 \leq g' \leq g$,
\item $\sH_{(2,1),(g',0)}((1,1),(1))$ for $0 \leq g' \leq g$.
\end{itemize}
Furthermore, these gluing maps are finite group quotients onto their images.
\end{lemma}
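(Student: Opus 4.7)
The plan is to analyze the preimage of each stratum $D_j \subseteq \Mbar_{0,b}/S_b$ under the branch morphism \eqref{eq:maptoMbar}, generalizing the case analysis of Example~\ref{ex:BD}.  A general geometric point of this preimage parameterizes a cover $\alpha: C \to P$ with $P = P_1 \cup_q P_2$ a nodal genus-zero curve having a single node $q$ and with $j$ (respectively $b-j$) simple branch points on $P_1$ (respectively $P_2$).  Setting $C_i := \alpha^{-1}(P_i)$, the discrete invariants of such a stratum are the ramification profile $\mu$ of $\alpha$ over $q$ (a partition of $3$, so $\mu \in \{(3), (2,1), (1,1,1)\}$) and the connectivity of each $C_i$: since $\alpha|_{C_i}$ has degree $3$, each $C_i$ is either connected or splits as a disjoint union $C_i = C_i' \sqcup C_i''$ with $\deg(\alpha|_{C_i'}) = 2$ and $\deg(\alpha|_{C_i''}) = 1$.

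The heart of the argument is an enumeration of the compatible combinations of these invariants, applying Riemann--Hurwitz to each component of $C_i$ to determine its genus as a function of $j$ and the genus formula for nodal curves to confirm that the total equals $g$.  For $\mu = (3)$ both $C_i$ are necessarily connected, producing gluings from $\sH_{3,g_1}((3)) \times \sH_{3,g_2}((3))$.  For $\mu = (2,1)$ the admissible-cover constraint forces the ramification-$2$ branches on each side to be matched, so if both $C_1, C_2$ disconnect, then the ramified and unramified halves glue separately and $C$ itself disconnects, which is forbidden; this leaves two sub-cases that realize $\sH_{3,g_1}((2,1)) \times \sH_{3,g_2}((2,1))$ and $\sH_{3,g_1}((2,1)) \times \sH_{(2,1),(g_2',0)}((2),(1))$ respectively.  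For $\mu = (1,1,1)$ all three connectivity patterns survive: in the both-disconnected sub-case, precisely the bijections between the three-point fibers over $q$ whose gluing realizes a connected bipartite graph on the four components of $C_1 \sqcup C_2$ produce an admissible cover with connected source, and these yield $\sH_{(2,1),(g_1',0)}((1,1),(1)) \times \sH_{(2,1),(g_2',0)}((1,1),(1))$, while the other two sub-cases produce the remaining listed products.

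To conclude that each gluing map is a finite group quotient onto its image, I would observe that it is proper (both source and target are proper Deligne--Mumford stacks) and that its fibers consist of orbits under a finite group generated by relabelings of the marked preimages of $q$ on each factor that preserve ramification indices (trivial for $\mu = (3)$ and $\mu = (2,1)$, and $S_3$ for $\mu = (1,1,1)$) together with the involution exchanging the two factors when they parameterize isomorphic data.  The main obstacle will be the combinatorial bookkeeping in the $\mu = (1,1,1)$ case where both $C_1, C_2$ disconnect: one must describe precisely which of the $3!$ bijections between the fibers over $q$ yield a connected $C$ and verify that these all lie in a single orbit of the symmetry action on the markings, so that the resulting stratum is the image of a single product from the list.
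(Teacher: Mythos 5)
Your proposal follows essentially the same route as the paper: stratify the preimage of each $D_j$ by the ramification profile over the node and the connectivity of the $\alpha^{-1}(P_i)$, use Riemann--Hurwitz to pin down the genera, and exhibit each stratum as the image of a gluing map whose fibers are orbits of relabelings of the fiber over the node together with the factor-swap involution. The only imprecision is in the mixed connected/disconnected $(1,1,1)$ case, where the relevant deck group is the $\ZZ_2$ of simultaneous relabelings compatible with the component structure on the disconnected side rather than all of $S_3$; this does not affect the conclusion that each gluing map is a finite group quotient onto its image.
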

\begin{proof}
Let $\mathcal{S} \subseteq \sHbar_{3,g}$ be a codimension-$1$ boundary stratum lying over a stratum $D_j \subseteq \Mbar_{0,b}/S_b$, and let $\alpha: C \rightarrow P$ be an element of $\mathcal{S}$.  Then $P$ has two components $P_1$ and $P_2$ (where $P_1$ has $j$ smooth branch points and $P_2$ has $b-j$), and the ramification profile of $\alpha$ over the node at which $P_1$ and $P_2$ meet is either $(3)$, $(2,1)$, or $(1,1,1)$.  In the first case, $C$ must consist of two components meeting at a single node:

\tikzset{every picture/.style={line width=0.75pt}} 

\begin{center}
\begin{tikzpicture}[x=0.75pt,y=0.75pt,yscale=-0.85,xscale=0.85]

\draw   (100,143.15) .. controls (100,122.63) and (127.65,106) .. (161.75,106) .. controls (195.85,106) and (223.5,122.63) .. (223.5,143.15) .. controls (223.5,163.67) and (195.85,180.3) .. (161.75,180.3) .. controls (127.65,180.3) and (100,163.67) .. (100,143.15) -- cycle ;
\draw  [draw opacity=0] (148.31,137.76) .. controls (148.31,137.76) and (148.31,137.76) .. (148.31,137.76) .. controls (148.31,142.71) and (140.97,146.73) .. (131.91,146.73) .. controls (122.86,146.73) and (115.51,142.71) .. (115.51,137.76) .. controls (115.51,137.28) and (115.58,136.81) .. (115.72,136.34) -- (131.91,137.76) -- cycle ; \draw   (148.31,137.76) .. controls (148.31,137.76) and (148.31,137.76) .. (148.31,137.76) .. controls (148.31,142.71) and (140.97,146.73) .. (131.91,146.73) .. controls (122.86,146.73) and (115.51,142.71) .. (115.51,137.76) .. controls (115.51,137.28) and (115.58,136.81) .. (115.72,136.34) ;
\draw  [draw opacity=0] (117.05,141.81) .. controls (120.81,138.96) and (126.34,137.16) .. (132.5,137.16) .. controls (138.09,137.16) and (143.16,138.64) .. (146.86,141.05) -- (132.5,150.73) -- cycle ; \draw   (117.05,141.81) .. controls (120.81,138.96) and (126.34,137.16) .. (132.5,137.16) .. controls (138.09,137.16) and (143.16,138.64) .. (146.86,141.05) ;

\draw  [draw opacity=0] (191.31,138.76) .. controls (191.31,138.76) and (191.31,138.76) .. (191.31,138.76) .. controls (191.31,143.71) and (183.97,147.73) .. (174.91,147.73) .. controls (165.86,147.73) and (158.51,143.71) .. (158.51,138.76) .. controls (158.51,138.28) and (158.58,137.81) .. (158.72,137.34) -- (174.91,138.76) -- cycle ; \draw   (191.31,138.76) .. controls (191.31,138.76) and (191.31,138.76) .. (191.31,138.76) .. controls (191.31,143.71) and (183.97,147.73) .. (174.91,147.73) .. controls (165.86,147.73) and (158.51,143.71) .. (158.51,138.76) .. controls (158.51,138.28) and (158.58,137.81) .. (158.72,137.34) ;
\draw  [draw opacity=0] (160.05,142.81) .. controls (163.81,139.96) and (169.34,138.16) .. (175.5,138.16) .. controls (181.09,138.16) and (186.16,139.64) .. (189.86,142.05) -- (175.5,151.73) -- cycle ; \draw   (160.05,142.81) .. controls (163.81,139.96) and (169.34,138.16) .. (175.5,138.16) .. controls (181.09,138.16) and (186.16,139.64) .. (189.86,142.05) ;

\draw   (223.5,142.15) .. controls (223.5,121.63) and (251.15,105) .. (285.25,105) .. controls (319.35,105) and (347,121.63) .. (347,142.15) .. controls (347,162.67) and (319.35,179.3) .. (285.25,179.3) .. controls (251.15,179.3) and (223.5,162.67) .. (223.5,142.15) -- cycle ;
\draw  [draw opacity=0] (284.31,139.76) .. controls (284.31,139.76) and (284.31,139.76) .. (284.31,139.76) .. controls (284.31,144.71) and (276.97,148.73) .. (267.91,148.73) .. controls (258.86,148.73) and (251.51,144.71) .. (251.51,139.76) .. controls (251.51,139.28) and (251.58,138.81) .. (251.72,138.34) -- (267.91,139.76) -- cycle ; \draw   (284.31,139.76) .. controls (284.31,139.76) and (284.31,139.76) .. (284.31,139.76) .. controls (284.31,144.71) and (276.97,148.73) .. (267.91,148.73) .. controls (258.86,148.73) and (251.51,144.71) .. (251.51,139.76) .. controls (251.51,139.28) and (251.58,138.81) .. (251.72,138.34) ;
\draw  [draw opacity=0] (253.05,143.81) .. controls (256.81,140.96) and (262.34,139.16) .. (268.5,139.16) .. controls (274.09,139.16) and (279.16,140.64) .. (282.86,143.05) -- (268.5,152.73) -- cycle ; \draw   (253.05,143.81) .. controls (256.81,140.96) and (262.34,139.16) .. (268.5,139.16) .. controls (274.09,139.16) and (279.16,140.64) .. (282.86,143.05) ;

\draw  [draw opacity=0] (327.31,140.76) .. controls (327.31,140.76) and (327.31,140.76) .. (327.31,140.76) .. controls (327.31,145.71) and (319.97,149.73) .. (310.91,149.73) .. controls (301.86,149.73) and (294.51,145.71) .. (294.51,140.76) .. controls (294.51,140.28) and (294.58,139.81) .. (294.72,139.34) -- (310.91,140.76) -- cycle ; \draw   (327.31,140.76) .. controls (327.31,140.76) and (327.31,140.76) .. (327.31,140.76) .. controls (327.31,145.71) and (319.97,149.73) .. (310.91,149.73) .. controls (301.86,149.73) and (294.51,145.71) .. (294.51,140.76) .. controls (294.51,140.28) and (294.58,139.81) .. (294.72,139.34) ;
\draw  [draw opacity=0] (296.05,144.81) .. controls (299.81,141.96) and (305.34,140.16) .. (311.5,140.16) .. controls (317.09,140.16) and (322.16,141.64) .. (325.86,144.05) -- (311.5,153.73) -- cycle ; \draw   (296.05,144.81) .. controls (299.81,141.96) and (305.34,140.16) .. (311.5,140.16) .. controls (317.09,140.16) and (322.16,141.64) .. (325.86,144.05) ;

\draw (207,132.79) node [anchor=north west][inner sep=0.75pt]   [align=left] {3};
\draw (228,132.79) node [anchor=north west][inner sep=0.75pt]   [align=left] {3};
\end{tikzpicture}
\end{center}

\noindent Note that the genera $g_1$ and $g_2$ of the two components of $C$ are determined from $g$ and $j$ by the Riemann--Hurwitz formula.  (In particular, $g_1 = (j-2)/2$, so the above picture illustrates the case $g=4$ and $j=6$.)  Therefore, in this case, $\mathcal{S}$ is the image of a gluing map
\[\sH_{3,g_1}((3)) \times \sH_{3,g_2}((3)) \rightarrow \sHbar_{3,g}.\]
If $g_1 \neq g_2$, then this gluing map is injective, that is, it is fully faithful as a map of algebraic stacks.  On the other hand, if $g_1 = g_2$, then the gluing map is a $\ZZ_2$-quotient onto its image, accounting for the fact that the product chooses an ordering of the two components.  (The two choices of ordering are equal if the two components happen to be isomorphic, but in this case, the image in $\sHbar_{3,g}$ has an extra $\ZZ_2$ automorphism group not present in the domain of the gluing map, so the map is still a $\ZZ_2$-quotient in the orbifold sense.)

If, on the other hand, the ramification profile of $\alpha$ over the node of $P$ is $(2,1)$, then, as in Example~\ref{ex:BD}, it can be the case that one (but at most one) of $\alpha^{-1}(P_1)$ or $\alpha^{-1}(P_2)$ is disconnected, and the possible geometries of $C$ are as illustrated in that example.  If $\alpha^{-1}(P_1)$ is connected, then its genus is determined by Riemann--Hurwitz: $g_1= (j-3)/2$.  If $\alpha^{-1}(P_2)$ is connected, then its genus is $g_2 =(b-j-3)/2$. If $\alpha^{-1}(P_1)$ is disconnected, then it consists of one component mapping to $P_1$ with degree $1$ (and therefore necessarily of genus zero) and another of genus $g_1=(j-1)/2$ mapping with degree $2$; note, in this case, that we cannot have $g_1 = 0$, since $j\geq2$.  Similar reasoning applies if $\alpha^{-1}(P_2)$ is disconnected: it consists of one component mapping to $P_2$ with degree $1$ and  genus zero and another of genus $g_2=(b-j-1)/2\geq1$ mapping with degree $2$. For each of the possible cases---that $\alpha^{-1}(P_1)$ and $\alpha^{-1}(P_2)$ are both connected, $\alpha^{-1}(P_1)$ is connected but $\alpha^{-1}(P_2)$ is disconnected, and $\alpha^{-1}(P_2)$ is connected but $\alpha^{-1}(P_1)$ is disconnected---the stratum $\mathcal{S}$ is the image of a corresponding gluing map:
\[\sH_{3,g_1}((2,1)) \times \sH_{3, g_2}((2,1)) \rightarrow \sHbar_{3,g},\]
which is injective for $g_1 \neq g_2$ and a $\ZZ_2$-quotient onto its image for $g_1 = g_2$, or one of
\[\sH_{3,g_1}((2,1)) \times \sH_{(2,1), (g_2, 0)}((2), (1)) \rightarrow \sHbar_{3,g},\]
\[\sH_{(2,1),(g_1,0)}((2),(1)) \times \sH_{3,g_2}((2, 1)) \rightarrow \sHbar_{3,g},\]
which are injective and have $g_1,g_2\geq1$.

Finally, if the ramification profile of $\alpha$ over the node of $P$ is $(1,1,1)$, then it can be the case that one or both of $\alpha^{-1}(P_1)$ and $\alpha^{-1}(P_2)$ are disconnected, yielding the following possible geometries of $C$ (illustrated in the case where $g=4$ and $j=6$):

\tikzset{every picture/.style={line width=0.75pt}} 

\begin{center}
\begin{minipage}{0.3\linewidth}
\begin{tikzpicture}[x=0.75pt,y=0.75pt,yscale=-0.7,xscale=0.7]

\draw   (229,163.8) .. controls (221,140.8) and (302,133.3) .. (325,150.3) .. controls (348,167.3) and (343,216.3) .. (317,229.3) .. controls (291,242.3) and (228,235.8) .. (235,217.8) .. controls (242,199.8) and (275,202.3) .. (275,189.3) .. controls (275,176.3) and (237,186.8) .. (229,163.8) -- cycle ;
\draw   (138,84.3) .. controls (164,73.3) and (238,78.5) .. (229,98.8) .. controls (220,119.11) and (182,109.5) .. (182,126.3) .. controls (182,143.11) and (224,141.11) .. (229,158.8) .. controls (234,176.5) and (137,178.3) .. (117,148.3) .. controls (97,118.3) and (112,95.3) .. (138,84.3) -- cycle ;
\draw   (118,216.8) .. controls (118,204.63) and (144.19,194.76) .. (176.5,194.76) .. controls (208.81,194.76) and (235,204.63) .. (235,216.8) .. controls (235,228.98) and (208.81,238.85) .. (176.5,238.85) .. controls (144.19,238.85) and (118,228.98) .. (118,216.8) -- cycle ;
\draw   (229,98.8) .. controls (229,86.63) and (253.18,76.76) .. (283,76.76) .. controls (312.82,76.76) and (337,86.63) .. (337,98.8) .. controls (337,110.98) and (312.82,120.85) .. (283,120.85) .. controls (253.18,120.85) and (229,110.98) .. (229,98.8) -- cycle ;
\draw  [draw opacity=0] (319.51,177.1) .. controls (319.51,177.1) and (319.51,177.1) .. (319.51,177.1) .. controls (319.51,177.1) and (319.51,177.1) .. (319.51,177.1) .. controls (319.51,182.79) and (310.92,187.4) .. (300.31,187.4) .. controls (289.71,187.4) and (281.11,182.79) .. (281.11,177.1) .. controls (281.11,176.53) and (281.2,175.98) .. (281.36,175.44) -- (300.31,177.1) -- cycle ; \draw   (319.51,177.1) .. controls (319.51,177.1) and (319.51,177.1) .. (319.51,177.1) .. controls (319.51,177.1) and (319.51,177.1) .. (319.51,177.1) .. controls (319.51,182.79) and (310.92,187.4) .. (300.31,187.4) .. controls (289.71,187.4) and (281.11,182.79) .. (281.11,177.1) .. controls (281.11,176.53) and (281.2,175.98) .. (281.36,175.44) ;
\draw  [draw opacity=0] (283.06,181.64) .. controls (287.45,178.43) and (293.86,176.41) .. (301,176.41) .. controls (307.47,176.41) and (313.34,178.07) .. (317.65,180.77) -- (301,192) -- cycle ; \draw   (283.06,181.64) .. controls (287.45,178.43) and (293.86,176.41) .. (301,176.41) .. controls (307.47,176.41) and (313.34,178.07) .. (317.65,180.77) ;

\draw  [draw opacity=0] (168.51,101.1) .. controls (168.51,101.1) and (168.51,101.1) .. (168.51,101.1) .. controls (168.51,101.1) and (168.51,101.1) .. (168.51,101.1) .. controls (168.51,106.79) and (159.92,111.4) .. (149.31,111.4) .. controls (138.71,111.4) and (130.11,106.79) .. (130.11,101.1) .. controls (130.11,100.53) and (130.2,99.98) .. (130.36,99.44) -- (149.31,101.1) -- cycle ; \draw   (168.51,101.1) .. controls (168.51,101.1) and (168.51,101.1) .. (168.51,101.1) .. controls (168.51,101.1) and (168.51,101.1) .. (168.51,101.1) .. controls (168.51,106.79) and (159.92,111.4) .. (149.31,111.4) .. controls (138.71,111.4) and (130.11,106.79) .. (130.11,101.1) .. controls (130.11,100.53) and (130.2,99.98) .. (130.36,99.44) ;
\draw  [draw opacity=0] (132.06,105.64) .. controls (136.45,102.43) and (142.86,100.41) .. (150,100.41) .. controls (156.47,100.41) and (162.34,102.07) .. (166.65,104.77) -- (150,116) -- cycle ; \draw   (132.06,105.64) .. controls (136.45,102.43) and (142.86,100.41) .. (150,100.41) .. controls (156.47,100.41) and (162.34,102.07) .. (166.65,104.77) ;

\draw  [draw opacity=0] (320.51,207.1) .. controls (320.51,207.1) and (320.51,207.1) .. (320.51,207.1) .. controls (320.51,207.1) and (320.51,207.1) .. (320.51,207.1) .. controls (320.51,212.79) and (311.92,217.4) .. (301.31,217.4) .. controls (290.71,217.4) and (282.11,212.79) .. (282.11,207.1) .. controls (282.11,206.53) and (282.2,205.98) .. (282.36,205.44) -- (301.31,207.1) -- cycle ; \draw   (320.51,207.1) .. controls (320.51,207.1) and (320.51,207.1) .. (320.51,207.1) .. controls (320.51,207.1) and (320.51,207.1) .. (320.51,207.1) .. controls (320.51,212.79) and (311.92,217.4) .. (301.31,217.4) .. controls (290.71,217.4) and (282.11,212.79) .. (282.11,207.1) .. controls (282.11,206.53) and (282.2,205.98) .. (282.36,205.44) ;
\draw  [draw opacity=0] (284.06,211.64) .. controls (288.45,208.43) and (294.86,206.41) .. (302,206.41) .. controls (308.47,206.41) and (314.34,208.07) .. (318.65,210.77) -- (302,222) -- cycle ; \draw   (284.06,211.64) .. controls (288.45,208.43) and (294.86,206.41) .. (302,206.41) .. controls (308.47,206.41) and (314.34,208.07) .. (318.65,210.77) ;

\draw  [draw opacity=0] (169.51,133.1) .. controls (169.51,133.1) and (169.51,133.1) .. (169.51,133.1) .. controls (169.51,133.1) and (169.51,133.1) .. (169.51,133.1) .. controls (169.51,138.79) and (160.92,143.4) .. (150.31,143.4) .. controls (139.71,143.4) and (131.11,138.79) .. (131.11,133.1) .. controls (131.11,132.53) and (131.2,131.98) .. (131.36,131.44) -- (150.31,133.1) -- cycle ; \draw   (169.51,133.1) .. controls (169.51,133.1) and (169.51,133.1) .. (169.51,133.1) .. controls (169.51,133.1) and (169.51,133.1) .. (169.51,133.1) .. controls (169.51,138.79) and (160.92,143.4) .. (150.31,143.4) .. controls (139.71,143.4) and (131.11,138.79) .. (131.11,133.1) .. controls (131.11,132.53) and (131.2,131.98) .. (131.36,131.44) ;
\draw  [draw opacity=0] (133.06,137.64) .. controls (137.45,134.43) and (143.86,132.41) .. (151,132.41) .. controls (157.47,132.41) and (163.34,134.07) .. (167.65,136.77) -- (151,148) -- cycle ; \draw   (133.06,137.64) .. controls (137.45,134.43) and (143.86,132.41) .. (151,132.41) .. controls (157.47,132.41) and (163.34,134.07) .. (167.65,136.77) ;

\draw (211,87.79) node [anchor=north west][inner sep=0.75pt]   [align=left] {1};
\draw (235,87.79) node [anchor=north west][inner sep=0.75pt]   [align=left] {1};
\draw (209,151.79) node [anchor=north west][inner sep=0.75pt]   [align=left] {1};
\draw (233,151.79) node [anchor=north west][inner sep=0.75pt]   [align=left] {1};
\draw (214,209.79) node [anchor=north west][inner sep=0.75pt]   [align=left] {1};
\draw (238,210.79) node [anchor=north west][inner sep=0.75pt]   [align=left] {1};
\end{tikzpicture}
\end{minipage}
\begin{minipage}{0.3\linewidth}
\begin{tikzpicture}[x=0.75pt,y=0.75pt,yscale=-0.7,xscale=0.7]

\draw   (125,79.8) .. controls (144,67.8) and (200,65.8) .. (209,83.8) .. controls (218,101.8) and (161,91.8) .. (160,111.8) .. controls (159,131.8) and (209,129.8) .. (209,146.8) .. controls (209,163.8) and (164,153.8) .. (163,169.8) .. controls (162,185.8) and (227,190.8) .. (212,207.8) .. controls (197,224.8) and (128,210.8) .. (109,195.8) .. controls (90,180.8) and (87,166.8) .. (88,133.8) .. controls (89,100.8) and (106,91.8) .. (125,79.8) -- cycle ;
\draw  [draw opacity=0] (148.51,140.1) .. controls (148.51,140.1) and (148.51,140.1) .. (148.51,140.1) .. controls (148.51,140.1) and (148.51,140.1) .. (148.51,140.1) .. controls (148.51,145.79) and (139.92,150.4) .. (129.31,150.4) .. controls (118.71,150.4) and (110.11,145.79) .. (110.11,140.1) .. controls (110.11,139.53) and (110.2,138.98) .. (110.36,138.44) -- (129.31,140.1) -- cycle ; \draw   (148.51,140.1) .. controls (148.51,140.1) and (148.51,140.1) .. (148.51,140.1) .. controls (148.51,140.1) and (148.51,140.1) .. (148.51,140.1) .. controls (148.51,145.79) and (139.92,150.4) .. (129.31,150.4) .. controls (118.71,150.4) and (110.11,145.79) .. (110.11,140.1) .. controls (110.11,139.53) and (110.2,138.98) .. (110.36,138.44) ;
\draw  [draw opacity=0] (112.06,144.64) .. controls (116.45,141.43) and (122.86,139.41) .. (130,139.41) .. controls (136.47,139.41) and (142.34,141.07) .. (146.65,143.77) -- (130,155) -- cycle ; \draw   (112.06,144.64) .. controls (116.45,141.43) and (122.86,139.41) .. (130,139.41) .. controls (136.47,139.41) and (142.34,141.07) .. (146.65,143.77) ;

\draw   (209,146.8) .. controls (201,123.8) and (288,124.8) .. (315,140.8) .. controls (342,156.8) and (334,190.8) .. (308,203.8) .. controls (282,216.8) and (208,218.8) .. (215,200.8) .. controls (222,182.8) and (250,185.8) .. (250,172.8) .. controls (250,159.8) and (217,169.8) .. (209,146.8) -- cycle ;
\draw   (209,88.76) .. controls (209,76.58) and (235.19,66.71) .. (267.5,66.71) .. controls (299.81,66.71) and (326,76.58) .. (326,88.76) .. controls (326,100.93) and (299.81,110.8) .. (267.5,110.8) .. controls (235.19,110.8) and (209,100.93) .. (209,88.76) -- cycle ;
\draw  [draw opacity=0] (299.51,154.1) .. controls (299.51,154.1) and (299.51,154.1) .. (299.51,154.1) .. controls (299.51,154.1) and (299.51,154.1) .. (299.51,154.1) .. controls (299.51,159.79) and (290.92,164.4) .. (280.31,164.4) .. controls (269.71,164.4) and (261.11,159.79) .. (261.11,154.1) .. controls (261.11,153.53) and (261.2,152.98) .. (261.36,152.44) -- (280.31,154.1) -- cycle ; \draw   (299.51,154.1) .. controls (299.51,154.1) and (299.51,154.1) .. (299.51,154.1) .. controls (299.51,154.1) and (299.51,154.1) .. (299.51,154.1) .. controls (299.51,159.79) and (290.92,164.4) .. (280.31,164.4) .. controls (269.71,164.4) and (261.11,159.79) .. (261.11,154.1) .. controls (261.11,153.53) and (261.2,152.98) .. (261.36,152.44) ;
\draw  [draw opacity=0] (263.06,158.64) .. controls (267.45,155.43) and (273.86,153.41) .. (281,153.41) .. controls (287.47,153.41) and (293.34,155.07) .. (297.65,157.77) -- (281,169) -- cycle ; \draw   (263.06,158.64) .. controls (267.45,155.43) and (273.86,153.41) .. (281,153.41) .. controls (287.47,153.41) and (293.34,155.07) .. (297.65,157.77) ;

\draw  [draw opacity=0] (300.51,178.1) .. controls (300.51,178.1) and (300.51,178.1) .. (300.51,178.1) .. controls (300.51,178.1) and (300.51,178.1) .. (300.51,178.1) .. controls (300.51,183.79) and (291.92,188.4) .. (281.31,188.4) .. controls (270.71,188.4) and (262.11,183.79) .. (262.11,178.1) .. controls (262.11,177.53) and (262.2,176.98) .. (262.36,176.44) -- (281.31,178.1) -- cycle ; \draw   (300.51,178.1) .. controls (300.51,178.1) and (300.51,178.1) .. (300.51,178.1) .. controls (300.51,178.1) and (300.51,178.1) .. (300.51,178.1) .. controls (300.51,183.79) and (291.92,188.4) .. (281.31,188.4) .. controls (270.71,188.4) and (262.11,183.79) .. (262.11,178.1) .. controls (262.11,177.53) and (262.2,176.98) .. (262.36,176.44) ;
\draw  [draw opacity=0] (264.06,182.64) .. controls (268.45,179.43) and (274.86,177.41) .. (282,177.41) .. controls (288.47,177.41) and (294.34,179.07) .. (298.65,181.77) -- (282,193) -- cycle ; \draw   (264.06,182.64) .. controls (268.45,179.43) and (274.86,177.41) .. (282,177.41) .. controls (288.47,177.41) and (294.34,179.07) .. (298.65,181.77) ;

\draw (191,78.79) node [anchor=north west][inner sep=0.75pt]   [align=left] {1};
\draw (189,135.79) node [anchor=north west][inner sep=0.75pt]   [align=left] {1};
\draw (191,192.79) node [anchor=north west][inner sep=0.75pt]   [align=left] {1};
\draw (214,77.79) node [anchor=north west][inner sep=0.75pt]   [align=left] {1};
\draw (216,136.79) node [anchor=north west][inner sep=0.75pt]   [align=left] {1};
\draw (222,192.79) node [anchor=north west][inner sep=0.75pt]   [align=left] {1};

\end{tikzpicture}
\end{minipage}
\begin{minipage}{0.3\linewidth}
\begin{tikzpicture}[x=0.75pt,y=0.75pt,yscale=-0.7,xscale=0.7]

\draw   (105,59.8) .. controls (124,47.8) and (180,45.8) .. (189,63.8) .. controls (198,81.8) and (141,71.8) .. (140,91.8) .. controls (139,111.8) and (189,109.8) .. (189,126.8) .. controls (189,143.8) and (144,133.8) .. (143,149.8) .. controls (142,165.8) and (207,170.8) .. (192,187.8) .. controls (177,204.8) and (108,190.8) .. (89,175.8) .. controls (70,160.8) and (67,146.8) .. (68,113.8) .. controls (69,80.8) and (86,71.8) .. (105,59.8) -- cycle ;
\draw   (227,95.8) .. controls (227,80.8) and (178,76.8) .. (192,64.8) .. controls (206,52.8) and (244,43.8) .. (274,61.8) .. controls (304,79.8) and (309.45,100.36) .. (310,120.8) .. controls (310.55,141.25) and (307.96,149.47) .. (297,163.8) .. controls (286.04,178.14) and (278,185.8) .. (260,190.8) .. controls (242,195.8) and (203,198.8) .. (196,183.8) .. controls (189,168.8) and (231,171.8) .. (231,155.8) .. controls (231,139.8) and (190,141.8) .. (189,126.8) .. controls (188,111.8) and (227,110.8) .. (227,95.8) -- cycle ;
\draw  [draw opacity=0] (128.51,120.1) .. controls (128.51,120.1) and (128.51,120.1) .. (128.51,120.1) .. controls (128.51,120.1) and (128.51,120.1) .. (128.51,120.1) .. controls (128.51,125.79) and (119.92,130.4) .. (109.31,130.4) .. controls (98.71,130.4) and (90.11,125.79) .. (90.11,120.1) .. controls (90.11,119.53) and (90.2,118.98) .. (90.36,118.44) -- (109.31,120.1) -- cycle ; \draw   (128.51,120.1) .. controls (128.51,120.1) and (128.51,120.1) .. (128.51,120.1) .. controls (128.51,120.1) and (128.51,120.1) .. (128.51,120.1) .. controls (128.51,125.79) and (119.92,130.4) .. (109.31,130.4) .. controls (98.71,130.4) and (90.11,125.79) .. (90.11,120.1) .. controls (90.11,119.53) and (90.2,118.98) .. (90.36,118.44) ;
\draw  [draw opacity=0] (92.06,124.64) .. controls (96.45,121.43) and (102.86,119.41) .. (110,119.41) .. controls (116.47,119.41) and (122.34,121.07) .. (126.65,123.77) -- (110,135) -- cycle ; \draw   (92.06,124.64) .. controls (96.45,121.43) and (102.86,119.41) .. (110,119.41) .. controls (116.47,119.41) and (122.34,121.07) .. (126.65,123.77) ;

\draw  [draw opacity=0] (282.51,120.1) .. controls (282.51,120.1) and (282.51,120.1) .. (282.51,120.1) .. controls (282.51,120.1) and (282.51,120.1) .. (282.51,120.1) .. controls (282.51,125.79) and (273.92,130.4) .. (263.31,130.4) .. controls (252.71,130.4) and (244.11,125.79) .. (244.11,120.1) .. controls (244.11,119.53) and (244.2,118.98) .. (244.36,118.44) -- (263.31,120.1) -- cycle ; \draw   (282.51,120.1) .. controls (282.51,120.1) and (282.51,120.1) .. (282.51,120.1) .. controls (282.51,120.1) and (282.51,120.1) .. (282.51,120.1) .. controls (282.51,125.79) and (273.92,130.4) .. (263.31,130.4) .. controls (252.71,130.4) and (244.11,125.79) .. (244.11,120.1) .. controls (244.11,119.53) and (244.2,118.98) .. (244.36,118.44) ;
\draw  [draw opacity=0] (246.06,124.64) .. controls (250.45,121.43) and (256.86,119.41) .. (264,119.41) .. controls (270.47,119.41) and (276.34,121.07) .. (280.65,123.77) -- (264,135) -- cycle ; \draw   (246.06,124.64) .. controls (250.45,121.43) and (256.86,119.41) .. (264,119.41) .. controls (270.47,119.41) and (276.34,121.07) .. (280.65,123.77) ;

\draw (171,58.79) node [anchor=north west][inner sep=0.75pt]   [align=left] {1};
\draw (199,59.79) node [anchor=north west][inner sep=0.75pt]   [align=left] {1};
\draw (169,115.79) node [anchor=north west][inner sep=0.75pt]   [align=left] {1};
\draw (196,116.79) node [anchor=north west][inner sep=0.75pt]   [align=left] {1};
\draw (171,172.79) node [anchor=north west][inner sep=0.75pt]   [align=left] {1};
\draw (201,172.79) node [anchor=north west][inner sep=0.75pt]   [align=left] {1};
\end{tikzpicture}
    \end{minipage}
\end{center}
(Note that there are in general two possible geometries of the second type, given by reversing the roles of $C_1$ and $C_2$, but they behave symmetrically for the purposes of this computation, since $j=6=b-j$.)  Once again, the genus of each component is determined by Riemann--Hurwitz, and the resulting boundary strata are images of gluing maps
\[\sH_{(2,1),(g_1,0)}((1,1),(1)) \times \sH_{(2,1),(g_2,0)}((1,1),(1)) \rightarrow \sHbar_{3,g},\]
\[\sH_{3,g_1}((1,1,1)) \times \sH_{(2,1),(g_2,0)}((1,1),(1)) \rightarrow \sHbar_{3,g},\]
\[\sH_{(2,1),(g_1,0)}((1,1),(1)) \times \sH_{3,g_2}((1,1,1)) \rightarrow \sHbar_{3,g},\]
and
\[\sH_{3,g_1}((1,1,1)) \times \sH_{3,g_2}((1,1,1)) \rightarrow \sHbar_{3,g}.\]
The first of these is injective for $g_1 \neq g_2$ and a $\ZZ_2$-quotient onto its image when $g_1 = g_2$.  The second is a $\ZZ_2$-quotient for all $g_1$ and $g_2$: denoting the marked left-hand component by $(C; p_1, p_2, p_3)$ and the marked positive-genus right-hand component by $(C'; p_1', p_2')$, the pairs
\[\Big( (C; p_1, p_2, p_3), \; (C'; p_1', p_2') \Big) \;\; \text{ and } \;\; \Big( (C; p_2, p_1, p_3), \; (C'; p_2', p_1')\Big)\]
map to the same element of $\sHbar_{3,g}$.  (These two pairs are the same if there happens to exist both an automorphism on the left swapping $p_1$ and $p_2$ and an automorphism on the right swapping $p_1'$ and $p_2'$, but in this case, as above, the image in $\sHbar_{3,g}$ has an extra $\ZZ_2$ automorphism group not present in the domain of the gluing map and so the map is still a $\ZZ_2$-quotient in the orbifold sense.) The same is true of the third gluing map. Finally, the fourth gluing map above is an $S_3$-quotient in general (coming from the choice of labeling of the marked fiber on the left and corresponding labeling on the right), and an $(S_3 \times \ZZ_2)$-quotient when $g_1 = g_2$.
\end{proof}

In light of Remark~\ref{rem:disconnected_to_connected} and the fact that finite group quotients induce surjections on Chow groups, the above lemma shows that---assuming the Chow rings of these spaces satisfy the Chow--K\"unneth generation property---triviality of the Chow rings of all codimension-$1$ boundary strata in all $\sHbar_{3,g}$ will follow from triviality of the Chow rings of $\sH_{3,g}(\mu)$ for all $g \geq 0$ and all partitions $\mu$ of $k$, as well as triviality of the Chow rings of $\sH_{2,g}((2))$ for all $g \geq 1$ and $\sH_{2,g}((1,1))$ for all $g \geq 0$.  We now turn to a discussion of the techniques we will use to calculate these Chow rings.

\section{Tools for calculation}\label{tools}

\subsection{Background on Chow groups}

We review in this section some basic definitions and results we need about Chow groups. For more details, see \cite{Ful:98} and \cite{EiHa:16}.  Following these references, we will explain the background for schemes, but all of the results we mention extend to algebraic stacks by \cite{Vis:89}.

Let $X$ be a separated scheme of finite type over a fixed ground field $k$. For every nonnegative integer $p$, the group of $p$-cycles $Z_p(X)$ is defined to be the free abelian group generated by $p
$-dimensional subvarieties of $X$. The $p$th Chow group $A_p(X)$ of $X$ is defined to be the quotient of $Z_p(X)$ by the subgroup of $p$-cycles rationally equivalent to zero.
We denote $A_*(X) = \bigoplus_{p \geq 0}A_p(X)$.

The Chow groups enjoy some functorial properties that are extensively used throughout this paper.
First, there are proper pushforward and flat pullback homomorphisms on Chow groups, which are related by the compatibility property \cite[Proposition 1.7]{Ful:98}: given $f:X\to Y$ proper and $g: Y'\to Y$ flat, let $g':X'\to X$ and $f':X'\to Y'$ be the maps from the fiber product. Then $f'_*g'^*\alpha=g^*f_*\alpha$ for $\alpha\in A_*(X)$.
A useful application of the construction of flat pullback is the excision sequence \cite[Proposition 1.8]{Ful:98}: given a closed subscheme $Z \xhookrightarrow{i} X$ with open complement $U \coloneq X \smallsetminus Z \xhookrightarrow{j} X$, there exists a right exact sequence
\[A_p(Z) \xrightarrow{i_*} A_p(X) \xrightarrow{j^*} A_p(U) \rightarrow 0\]
for all $p$.  Recall also that given a rank-$r$ vector bundle $E$ on a scheme $X$ with projection $\pi : E \to X$, the flat pullback $\pi^* \colon A_{*-r}(X) \rightarrow A_*(E)$ is an isomorphism \cite[Theorem 3.3(a)]{Ful:98}.

Another key construction we will use is Chern classes: if $E$ is a locally free sheaf of rank $r$ on $X$, the $i$th Chern class is a homomorphism $c_i(E) : A_p(X) \rightarrow A_{p - i}(X)$.
In particular, when $X$ is smooth of dimension $n$, the Chow group $A_*(X)$ admits a graded ring structure with unit $[X] \in A_n(X)$, which allows us
to view the Chern class $c_i(E)$ as a cycle class in $A_{n - i}(X) = A^i(X)$,
and the Chern class homomorphism is defined as multiplication by $c_i(E)$ as an element in $A^i(X)$, the image of $[X]$ under the Chern class homomorphism.

The Chern classes $c_i(E)$ of a vector bundle $E$ over $X$ can be used to describe the Chow groups of the corresponding projective bundle $\P E$.
Let $\P E$ be the associated projective bundle with projection $\gamma \colon \P E \to X$, and denote $\zeta = c_1(\O_{\P E}(1))$.
If $X$ is smooth, then \cite[Theorem 3.3(b)]{Ful:98} can be restated as follows.

\begin{lemma}[Projective bundle formula]\label{ProjectiveChow}
    Let $X$ be a smooth scheme, and let $E$ be a rank-$r$ vector bundle on $X$.  Then the pullback $\gamma^*: A^*(X) \to A^*(\P E)$ is an injection, and there is an isomorphism
    \[
    A^*(\P E) \cong A^*(X)[\zeta]/(\zeta^{r} + c_1(E) \zeta^{r-1}  + \dots + c_{r}(E)).
    \]
    In particular, for any class $\beta \in A^*(X)$, we have $\gamma_*(\zeta^{i}\gamma^*\beta) = \beta$ when $i = r - 1$ and $0$ when $i < r - 1$.
\end{lemma}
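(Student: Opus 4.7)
The plan is to deduce this directly from \cite[Theorem 3.3(b)]{Ful:98}, which is cited in the paper and which already gives the result in the homological (cycle-dimension) indexing. That theorem asserts that every class in $A_*(\P E)$ has a unique expression $\sum_{i=0}^{r-1} \zeta^i \cdot \gamma^*\alpha_i$ with $\alpha_i \in A_*(X)$. Because $X$ is smooth, the graded group $A_*(X)$ carries a ring structure $A^*(X)$ via codimension indexing, and the same is true for $\P E$ since it is smooth over $X$; Fulton's decomposition then translates into the statement that $A^*(\P E)$ is a free $A^*(X)$-module on $1, \zeta, \ldots, \zeta^{r-1}$. Injectivity of $\gamma^*$ is immediate from this, since in the free expansion of $\gamma^*\beta$ the coefficient of $1$ is $\beta$.

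To obtain the ring presentation, I would observe that the class $\zeta^r \in A^r(\P E)$ must admit a unique expansion in the free basis $\{1, \zeta, \ldots, \zeta^{r-1}\}$ over $A^*(X)$. Tracing through the definition of the Chern classes of $E$ (with the sign conventions of \cite{Ful:98}), this expansion is precisely $\zeta^r = -c_1(E)\zeta^{r-1} - \cdots - c_r(E)$, yielding the single relation displayed in the lemma. Because the basis is free over $A^*(X)$, no further relations are needed, so the resulting presentation of $A^*(\P E)$ as a quotient of the polynomial ring $A^*(X)[\zeta]$ by this relation is complete.

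For the pushforward identity, I would apply the projection formula to reduce to $\gamma_*(\zeta^i \gamma^*\beta) = \gamma_*(\zeta^i)\cdot \beta$, so it suffices to evaluate $\gamma_*(\zeta^i)$. By dimension, $\gamma_*(\zeta^i)$ lies in $A^{i-(r-1)}(X)$, which is zero for $i < r-1$. When $i = r-1$, the class lies in $A^0(X)$, so it is an integer multiple of $[X]$; pulling back to a fiber $\P^{r-1} \subseteq \P E$, where $\zeta$ restricts to the hyperplane class, one sees that $\zeta^{r-1}$ restricts to the class of a point and hence pushes forward to $[X]$, fixing the constant as $1$. The only real obstacle here is organizational: one must track sign conventions for the Chern classes and invoke \cite{Vis:89}, as already noted in the paper, to transfer Fulton's statement from the scheme-theoretic to the stack-theoretic setting, but no new geometric input is required beyond these standard facts.
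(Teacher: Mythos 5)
Your proposal is correct and follows essentially the same route as the paper, which presents this lemma purely as a restatement of \cite[Theorem 3.3(b)]{Ful:98} with no further argument; you simply supply the standard details (free-module decomposition, the single relation defining the Chern classes, and the projection-formula/degree computation for $\gamma_*$) that the paper leaves implicit. The sign convention you need to check does come out as stated, since the paper's $\P E$ carries the tautological subbundle $\O_{\P E}(-1)\subseteq \gamma^*E$, so the relation is $c_r(\gamma^*E\otimes\O_{\P E}(1))=0$, which expands to exactly $\zeta^r+c_1(E)\zeta^{r-1}+\dots+c_r(E)=0$.
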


We will often compute the fundamental class of the vanishing locus of a map of vector bundles (defined precisely below), following the discussion in Section 2.2 from \cite{CaLa:22}.
Consider the following diagram where $\tau: Y \to X$ and $\rho: V \to X$ are vector bundles and $\phi: V \to Y$ is a map of vector bundles:

\begin{center}
  \begin{tikzcd}
\rho^* Y \arrow[rd]          & Y \arrow[rd, "\tau"]                                                                                             &   \\
\rho^* V \arrow[u] \arrow[r] & V \arrow[r, "\rho"] \arrow[u, "\phi"'] \arrow[lu, "\sigma"', dashed, bend right] \arrow[l, bend left=49] & X.
\end{tikzcd}
\end{center}
  The map $\phi$ defines a section $\sigma$ of $\rho^* Y \to V$ by composing the induced map $\rho^* V \to \rho^* Y$ with the tautological section of $\rho^* V \to V$. The bundles $\rho^*V$ and $\rho^*Y$ are the pullbacks of the vector bundles $v$ and $Y$ to the total space of $V$.
  Precisely, if $(x,v)$ is a point of $V$ (i.e. $\rho(v) = x$), then $\sigma$ is the map $V \to \rho^* V \to \rho^* Y$ given by $(x,v) \mapsto (x,v,v) \mapsto (x,v, \phi(v))$. This is well-defined since $\tau (\phi (v)) = \rho(v) = x$.
By the ``vanishing locus of $\phi$" we shall mean the preimage of the zero section of $\tau$ under $\phi$.
This coincides with the vanishing locus of $\sigma$ inside the total space of $V$.

\begin{lemma}\label{topchern}
  If $\tau$ is a vector bundle of rank $r$ and $\sigma$ vanishes in codimension $r$ inside $V$, then the vanishing locus of $\sigma$ has fundamental class equal to $c_r (\rho^* Y) = \rho^* c_r (Y)$.
  Since $V \to X$ is a vector bundle, the pullback $\rho^*$ is an isomorphism of Chow rings between $A^*(X)$ and $A^*(V)$.
  Therefore, to compute the class of the vanishing locus of $\phi$, it suffices to compute the class of $c_r(Y) \in A^*(X)$.
\end{lemma}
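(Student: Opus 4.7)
The plan is to recognize the statement as a direct application of the classical formula identifying the fundamental class of the zero locus of a regular section of a vector bundle with the top Chern class of that bundle.

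First, I would observe that $\sigma$ is a section of the rank-$r$ vector bundle $\rho^*Y \to V$, and the hypothesis that $\sigma$ vanishes in codimension $r$ says precisely that the zero scheme $Z(\sigma) \subseteq V$ has pure codimension equal to the rank of the bundle. This is exactly the condition under which $\sigma$ is a regular section, so that the refined intersection with the zero section reduces to a proper Gysin pullback and the fundamental class $[Z(\sigma)]$ coincides with the refined top Chern class of $\rho^*Y$ (see, e.g., \cite[Proposition 14.1 and \S 3.2]{Ful:98}). This gives
\[ [Z(\sigma)] = c_r(\rho^*Y) \in A^*(V). \]
Since, by construction, the vanishing locus of $\phi$ inside $V$ coincides with $Z(\sigma)$, this computes the class we want.

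Next, I would apply functoriality of Chern classes under pullback, which yields $c_r(\rho^*Y) = \rho^* c_r(Y)$ and establishes the displayed equality in the statement. For the final sentence, I would invoke the homotopy invariance recalled immediately before the lemma, namely that the flat pullback $\rho^*\colon A^*(X) \to A^*(V)$ along a vector bundle projection is an isomorphism \cite[Theorem 3.3(a)]{Ful:98}. Hence the computation of $[Z(\sigma)] \in A^*(V)$ is completely determined by the computation of $c_r(Y) \in A^*(X)$.

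There is no essential obstacle here: the argument is a short assembly of three standard facts (regular section yields top Chern class, Chern classes commute with pullback, and vector-bundle pullback is an isomorphism on Chow). The only point requiring any care is to note that the codimension hypothesis is exactly what promotes the set-theoretic vanishing of $\sigma$ to a regular zero scheme, so that the top Chern class formula applies on the nose rather than merely as a refined class supported on the zero locus.
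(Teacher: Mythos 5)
Your proposal is correct and follows exactly the route the paper intends: the paper gives no separate proof of Lemma~\ref{topchern}, treating it as the standard assembly of the localized top Chern class formula for a section vanishing in the expected codimension (\cite[Proposition 14.1]{Ful:98}, applicable here since $V$ is smooth and hence Cohen--Macaulay), functoriality of Chern classes under pullback, and homotopy invariance of Chow groups for vector bundle projections. There is nothing to add or correct.
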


A less standard homomorphism on Chow groups is the refined Gysin homomorphism, which we now briefly recall. Given a Cartesian diagram
\begin{equation}
\begin{tikzcd}
    g^{-1}(X) = W \arrow{d}{j}\arrow{r}{i} & V \arrow{d}{g} \\
    X \arrow{r}{f} & Y,
\end{tikzcd}
\end{equation}
where  $f$ is a regular embedding of codimension $d$ with normal bundle denoted by $N_{X/Y}$, the refined Gysin homomorphism is a map $f^! \colon A_{*}(V) \rightarrow A_{*-d}(W)$. When applied to the fundamental class of a subvariety $V' \subseteq V$, it is common to denote $f^![V']$ by $V' \cdot_Y X$, which is also called the ``refined intersection product" and is a class in $A_*(W)$.
In particular, if $V$ is of pure dimension $k$, and $i$ is also a regular embedding, then the intersection product of $V$ by $X$ over $Y$ on $W$ can be expressed as
\[V \cdot_Y X = f^![V] = \{c(j^*N_{X/Y}/N_{W/V}))\}_{k - d} \in A_{k - d}(W),\]
where we call $j^*N_{X/Y}/N_{W/V}$ the excess normal bundle of the fiber square.
The refined Gysin homomorphism has many applications.
Since the diagonal morphism $X \rightarrow X \times X$ is a regular embedding when $X$ is smooth, its refined Gysin homomorphism can be used to induce a ring structure on the Chow group $A_*(X)$.
The refined Gysin homomorphism also provides a generalization of the projection formula. Given $f : X \to Y$ a proper and flat morphism between two smooth schemes, the flat pullback gives a ring homomorphism $A^*(Y) \to A^*(X)$, and we have the projection formula $f_*(\alpha \cdot f^* \beta) = f_*\alpha \cdot \beta$ for any $\alpha \in A^*(X)$ and $\beta \in A^*(Y)$.
In fact, the map $f$ is not required to be flat; since it is always a local complete intersection morphism, we can replace the flat pullback $f^*$ by $f^!$ \cite[Proposition 8.3(c)]{Ful:98}.

Another application of the refined Gysin homomorphism is the excess intersection formula \cite[Theorem 6.3]{Ful:98}.
We state a special case of this formula that is used in Section~\ref{sec:3}
below.  Consider the Cartesian diagram
\begin{equation}\label{diag:specialExcess}
    \begin{tikzcd}
    T \ar{r}{\tau} \ar{d}{q} & S \ar{d}[swap]{p} \ar[bend left = 30]{dd}{f_2\mid_S}\\
    W \ar{r}{i} \ar{d}{j} & X_2 \ar{d}[swap]{f_2} \\
    X_1 \ar{r}{f_1} & Y,
    \end{tikzcd}
\end{equation}
where $f_1$, $f_2$, $i$ and $j$ are all regular embeddings, and $p$, $q$ are open immersions.
Assume that $X_1$ and $X_2$ are of pure dimension $k_1$ and $k_2$, and let $d_1$ and $d_2$ be the codimension of $f_1$ and $f_2$ respectively.
By \cite[Example 6.3.2]{Ful:98}, the excess normal bundle is independent of the orientation of the lower square of the diagram~\eqref{diag:specialExcess}, and thus the intersection product between $X_1$ and $X_2$ over $Y$ on $W$ is computed by
\begin{align*}
    X_1 \cdot_Y X_2 = f_1^![X_2] = \{c(j^*N_{X_1/Y}/N_{W/X_2})\}_{k_2 - d_1} = \{c(i^*N_{X_2/Y}/N_{W/X_1})\}_{k_1 - d_2} = f_2^![X_1].
\end{align*}
Applying the compatibility properties \cite[Theorem 6.2]{Ful:98} to the diagram~\eqref{diag:specialExcess}, we have
\[(f_2\circ p)^*(f_1)_*[X_1] = p^* i_* f_2^![X_1] = p^* i_* f_1^![X_2] = \tau_*(q^*f_1^![X_2]) = \tau_*(f_1^!p^*[X_2]) = \tau_*f_1^![S].\]

If we only assume $f_1$ and $f_2$ are regular embeddings with $p$ and $q$ being open immersions,
in general the fiber products $W$ and $T$ are not of pure dimension.
Let $C$ be a connected component of $T$, and restrict to the outer square of diagram~\eqref{diag:specialExcess}.
By the semicontinuity of fiber dimension, the component $C$ can have codimension at most $d_1$ in $S$.
In particular, if the restriction $\tau_C: C \rightarrow S$ of $\tau$ to the connected component $C$ is also a regular embedding, then the normal bundle $N_{C/S}$ can have rank at most $d_1$.
Let $j_C : C \rightarrow X_1$ be the restriction of $j \circ q$ to $C$.
The component $C$ is called an ``excess component'' if the excess normal bundle $j_C^*N_{X_1/Y}/N_{C/S}$ has positive rank; otherwise, $C$ is a component of expected codimension $d_1$ in $S$.
The above discussion leads to the following proposition.

\begin{proposition}\label{prop:excess}
    Assume that every connected component $C_\lambda$ of $T$
    is regularly embedded in $S$ with codimension $l_\lambda$ in $S$.
    If the regular embedding $f_1$ has codimension $d_1$, then
    \[(f_2\mid_S)^*(f_1)_*[X_1] = \sum_{\lambda} (\tau_{C_\lambda})_* \alpha_{C_\lambda} \in A^{d_1}(S),\]
    where
    \[\alpha_{C_\lambda} = \left\{c(j_{C_\lambda}^*N_{X_1/Y})c(N_{C_\lambda/S})^{-1}\right\}^{d_1 - l_\lambda} \in A^{d_1 - l_\lambda}(C_\lambda)\]
    is the part of the intersection $T$ supported on $C_\lambda$, and $j_{C_\lambda} \colon C_\lambda \rightarrow X_1$ is the restriction of $j \circ q$ to $C_\lambda$. The summation is over all connected components $C_\lambda$ of $T$.
\end{proposition}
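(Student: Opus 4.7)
The plan is to combine two ingredients: the compatibility identity $(f_2\mid_S)^* (f_1)_* [X_1] = \tau_* f_1^! [S]$ derived in the paragraph immediately preceding the proposition, and Fulton's excess intersection formula \cite[Theorem 6.3]{Ful:98} applied componentwise to $T$.

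First I would reverify the identity $(f_2\mid_S)^* (f_1)_* [X_1] = \tau_* f_1^! [S]$ in our present setup without assuming $W$ or $T$ has pure dimension. This is a purely formal consequence of compatibilities of the refined Gysin homomorphism $f_1^!$: flatness of the open immersions $p$ and $q$ allows one to commute $p^*$ past $i_*$ and $q^*$ past $f_1^!$, while the identification $f_2^! = f_2^*$ on flat pullbacks and the interchange of $(f_1)_*$ with $f_2^!$ across the middle Cartesian square are instances of \cite[Theorem 6.2]{Ful:98}. None of these steps uses purity of dimension.

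Next I would evaluate $f_1^! [S] \in A_*(T)$ by decomposing $T$ into its connected components $C_\lambda$. By the hypothesis, each $C_\lambda \hookrightarrow S$ is a regular embedding of codimension $l_\lambda$ (which, by semicontinuity of fiber dimension noted in the preceding discussion, satisfies $l_\lambda \leq d_1$), so the Segre class simplifies to $s(C_\lambda, S) = c(N_{C_\lambda/S})^{-1} \cap [C_\lambda]$. Fulton's theorem, expressed through its decomposition over distinguished varieties, then gives
\[
f_1^! [S] \;=\; \sum_\lambda \bigl\{\, c(j_{C_\lambda}^* N_{X_1/Y}) \, c(N_{C_\lambda/S})^{-1} \,\bigr\}^{d_1 - l_\lambda} \cap [C_\lambda] \;=\; \sum_\lambda \alpha_{C_\lambda} \cap [C_\lambda],
\]
where the codimension $d_1 - l_\lambda$ piece is precisely the one forced by the requirement that after applying $(\tau_{C_\lambda})_*$ the result lies in $A^{d_1}(S)$. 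Summing and pushing forward by $\tau_*$ then yields the formula of the proposition.

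The principal obstacle is the passage from the global class $f_1^![S] \in A_*(T)$ to the explicit component-by-component expression, and this is exactly where the hypothesis that every $C_\lambda$ is regularly embedded is essential: it makes the Segre class of each component computable as the inverse total Chern class of $N_{C_\lambda/S}$, turning Fulton's general decomposition into the closed-form sum in the statement. Everything else amounts to matching notation, tracking gradings, and using naturality of Chern classes.
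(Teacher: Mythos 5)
Your argument is correct and arrives at the same formula, but it finishes differently from the paper. The reduction $(f_2\mid_S)^*(f_1)_*[X_1]=\tau_*f_1^![S]$ is indeed the chain of compatibilities established in the paragraph before the proposition, and you are right that it survives without purity of $W$ or $T$. From there the paper does not evaluate $f_1^![S]$ directly: it deduces the componentwise formula as a special case of \cite[Theorem 13.9]{EiHa:16}, and in the remark that follows it sketches that theorem's proof by factoring the vertical morphism as a regular embedding followed by a smooth morphism, using $\pi^!=\delta^!\circ\rho'^*$ and the orientation-independence of the excess normal bundle. You instead compute $f_1^![S]$ intrinsically from Fulton's Segre-class description, $f_1^![S]=\bigl\{c(j^*N_{X_1/Y})\cap s(T,S)\bigr\}_{\dim S-d_1}$, together with $s(T,S)=\sum_\lambda s(C_\lambda,S)=\sum_\lambda c(N_{C_\lambda/S})^{-1}\cap[C_\lambda]$, which holds because Segre classes are additive over connected components and each $C_\lambda$ is regularly embedded. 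This is arguably the cleaner route: it stays entirely within \cite[Chapter 6]{Ful:98} and isolates exactly where the regular-embedding hypothesis on the $C_\lambda$ is used. One caveat: the compatibilities you list (flat base change and \cite[Theorem 6.2]{Ful:98}) do not by themselves give the exchange $f_2^![X_1]=f_1^![X_2]$ needed in the middle of the chain; the paper justifies this step via \cite[Example 6.3.2]{Ful:98} only under the extra hypothesis that $i$ and $j$ are regular embeddings, and in the generality of the proposition one must instead invoke Fulton's commutativity theorem for refined Gysin maps. This is a citation gap rather than a mathematical one, since that commutativity requires no purity or regularity of $W$ and $T$, but it is the one step that genuinely goes beyond \cite[Theorem 6.2]{Ful:98} and should be flagged.
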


\begin{remark}
    The above Proposition~\ref{prop:excess} is a special case of \cite[Theorem 13.9]{EiHa:16} without using the fact that $f_2|_S$ is a local complete intersection morphism.
    Since the proof of the more general statement is not included, we sketch a proof here assuming that $i'$ is a regular embedding.
    Note that the rightmost vertical map $\pi \colon X' \rightarrow X$ in \cite[Theorem 13.9]{EiHa:16} is a morphism between smooth varieties, so it is a local complete intersection morphism.
    In other words, $\pi$ admits a factorization into a regular embedding $\delta \colon X' \to Y$ followed by a smooth morphism $\rho : Y \to X$:
    \[\begin{tikzcd}
    Z' \ar{r}{i'}[swap]{\text{regular}} \ar{d}{\delta'}\ar[bend right=30]{dd}[swap]{\pi'} & X' \ar{d}[swap]{\delta} \ar[bend left = 30]{dd}{\pi} \\
    Y' \ar{r} \ar{d}{\rho'} & Y \ar{d}[swap]{\rho} \\
    Z \ar{r}{i}[swap]{\text{regular}} & X.
    \end{tikzcd}\]
    We can then use the extension of refined Gysin homomorphism to local complete intersection morphisms \cite[Section 6.6]{Ful:98}, i.e. $\pi^! = \delta^! \circ \rho'^*$.
    Since the excess normal bundle of the local complete intersection morphism $\pi$ satisfies $i'^*N_{X'/Y}/N_{Z'/Y'} = \pi'^*N_{Z/X}/N_{Z'/X'}$, it is independent of the orientation of the outer square and thus
    \begin{align*}
        \pi^*(i_*\beta) = i'_*(\pi^! (\beta)) &= i_*'\left\{\pi'^*(\beta) c(i'^*N_{X'/Y}/N_{Z'/Y'})\right\}_{b - (\dim X - \dim X')} \\
        &= i_*\left\{\pi'^*(\beta c(N_{Z/X})) c(N_{Z'/X'})^{-1}\right\}_{b - (\dim X - \dim X')}
    \end{align*}
    for any $\beta \in A_b(Z)$.
\end{remark}

\subsection{The Chow-K\"unneth generation Property}\label{subsec:ckgp}
A final aspect of Chow rings that we will need is an understanding of how they behave under product.  Specifically, we say that an algebraic stack $X$ satisfies the \emph{Chow--K\"unneth generation Property (CKgP)} if for all algebraic stacks $Y$, the tensor product map
\[A^*(X) \otimes A^*(Y) \to A^*(X \times Y)\]
is surjective. The CKgP has played an important role in recent works on intersection theory of moduli spaces \cite{CaLa:23,BaSc:23}.
One of the key features of the CKgP is that it is transferred under a wide class of morphisms. More precisely,
suppose $X \to Y$ is one of the following types of morphisms
\begin{enumerate}
\item[(M1)] an open embedding
\item[(M2)] a projective bundle
\item[(M3)] the total space of a vector bundle
\item[(M4)] a proper, surjective morphism
\item[(M5)] a gerbe banded by a finite group
\item[(M6)] a coarse moduli space morphism.
\end{enumerate}
If $Y$ has the CKgP, then $X$ has the CKgP.
In the cases (M3), (M5) and (M6), if $X$ has the CKgP, then so does $Y$.
For proofs, see \cite[Section 3.1]{CaLa:23}.

\subsection{Principal parts bundles and evaluation maps}\label{subsec:principalparts}
We recall here the definition and basic properties of relative bundles of principal parts. Suppose $\beta: X \to Y$ is a smooth, proper morphism and $\L$ is a line bundle on $X$. Let $\pi_1, \pi_2: X \times_Y X \to X$ be the two projections and let $\I$ be the ideal of the relative diagonal.
The $\emph{$m$th-order relative principal parts bundle}$ is defined as
\[\mathscr{P}^m_{X/Y}(\L) := \pi_{2*}\left(\pi_1^*\L \otimes \O_{X \times X}/\I^{m+1}\right).\]
Note that for $m = 0$, we have
$\mathscr{P}^0_{X/Y}(\L) = \L$.
The fiber of $\mathscr{P}^m_{X/Y}(\L)$ at a point $p \in X$ is naturally identified with the space of global sections of $\L$ restricted to the $m$th-order neighborhood of $p$ in its fiber $\beta^{-1}(\beta(p)) =: F$. Restricting from an $(m+1)$st-order neighborhood to an $m$th-order neighborhood induces a short exact sequence
\begin{equation} \label{filt} 0 \rightarrow \Sym^{m+1} \Omega_{X/Y} \otimes \L \rightarrow \mathscr{P}^{m+1}_{X/Y}(\L) \rightarrow \mathscr{P}^{m}_{X/Y}(\L) \rightarrow 0.
\end{equation}
The
filtration induced by repeatedly applying \eqref{filt}
corresponds to the order of vanishing of germs of sections.

There is a natural map
\begin{equation} \label{bev} \beta^*\beta_*\L \to \mathscr{P}^m_{X/Y}(\L),
\end{equation}
which we call the \emph{evaluation map}; this is because, when cohomology and base change holds for $\beta$ (e.g. if $R^1\beta_*\L = 0$), the fiber of $\beta^*\beta_*\L$ at a point $p \in X$ is naturally identified with $H^0(\L|_{F})$ and the map \eqref{bev} is given fiberwise by
\begin{equation} \label{evf} H^0(\L|_{F}) \rightarrow H^0(\L|_{F} \otimes \O_{F,p}/\mathfrak{m}_{F,p}^{m+1}).
\end{equation}
Explicitly, if $f \in H^0(\L|_F)$ is a global section, then \eqref{evf} is given by sending $f$ to its restriction to an $m$th-order neighborhood.
In the case that the fibers of
$F$ are one-dimensional, we typically write this as
\[f \mapsto (f(p), f_y(p), \tfrac{1}{2} f_{y^2}(p), \ldots, \tfrac{1}{m!} f_{y^m}(p)).\]
Here, $y$ is a local coordinate on $F$ vanishing at $p$, which we picture as a coordinate through $p$ ``in the vertical direction."

We will also encounter a case where $\beta$ factors $X \xrightarrow{a} A \to Y$ and each of these maps has one-dimensional fibers. Then there is an exact sequence
\[0 \rightarrow a^*\Omega_{A/Y} \rightarrow \Omega_{X/Y} \rightarrow \Omega_{X/A} \rightarrow 0.\]
This induces the following sequence of principal parts bundles
\begin{equation} \label{f3} 0 \rightarrow a^*\Omega_{A/Y} \otimes \L \rightarrow
\mathscr{P}^1_{X/Y}(\L) \rightarrow \mathscr{P}^1_{X/A}(\L) \rightarrow 0.
\end{equation}
In this particular case, we will write the evaluation map $\beta^*\beta_*\L \to \mathscr{P}_{X/Y}^1(\L)$ as $f \mapsto (f(p), f_y(p), f_x(p))$. The $(f(p), f_y(p))$ part corresponds to the image in the right-hand map above---the value and vertical derivative. The $f_x(p)$ part corresponds to the horizontal derivative.
The filtration \eqref{f3} shows that the
kernel of $\beta^*\beta_*\L \to \mathscr{P}^1_{X/A}(\L)$ admits a well-defined map to $a^*\Omega_{A/Y} \otimes \L$. If $f(p) = f_y(p) = 0$, we write this map as $f \mapsto f_x(p)$.

\section{Previous Results on Chow Rings of Hurwitz spaces} \label{previous}

In this section, we review previously-known results on Chow rings of Hurwitz spaces in degrees $2$ and $3$.  From here forward, we will only consider Hurwitz spaces with at most one marked ramification profile, so we denote them as, for example, $\H_{3,g}(2,1)$ instead of the more cumbersome notation $\H_{3,g}((2,1))$ used above.

\subsection{Degree 2}

Smooth curves that admit a degree-$2$ map to $\P^1$ are referred to as {\it hyperelliptic}, and their ramification points are referred to as {\it Weierstrauss points}.  These curves have been well-studied in the literature; in particular, two key facts are that a hyperelliptic curve is determined by its branch points in $\P^1$, and any hyperelliptic curve $C$ with degree-$2$ map $\alpha: C \rightarrow \P^1$ admits a {\it hyperelliptic involution} $i: C \rightarrow C$ defined by the property that $\alpha \circ i = \alpha$.  These facts can be used to quickly deduce the Chow rings of all marked Hurwitz spaces in degree $2$, as follows.

\begin{lemma} \label{h2}
The Chow rings of $\sH_{2,g}(2)$ for $g\geq 1$ and $\sH_{2,g}(1,1)$ for $g\geq 0$ are trivial and they have the CKgP.
\end{lemma}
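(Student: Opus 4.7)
The approach is to identify each of these Hurwitz spaces, up to a $\ZZ/2$-gerbe, with a symmetric quotient of the open moduli space $\M_{0,n}$, and then to conclude using that the latter has trivial rational Chow and the CKgP. The identification rests on two classical facts about double covers: the cover $\alpha: C \to P$ is determined up to isomorphism by its branch divisor $B \subset P$, and it admits a hyperelliptic involution $\iota: C \to C$ that swaps sheets and fixes each Weierstrass point.

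For $\sH_{2,g}(1,1)$, the assignment $(\alpha, q, p_1, p_2) \mapsto (P, q, B)$ defines a functor to $\M_{0, 1+(2g+2)}/S_{2g+2}$, with $q$ a distinguished mark and $B$ an unordered set of $2g+2$ marks. The two labelings of the unordered set $\alpha^{-1}(q) = \{p_1, p_2\}$ are interchanged by $\iota$; since $\iota$ does not preserve the labels, it is no longer an automorphism in $\sH_{2,g}(1,1)$, and a direct check---confirming that every morphism of marked genus-zero curves downstairs lifts to a unique label-preserving morphism of covers upstairs---shows the functor is an equivalence of stacks.  For $\sH_{2,g}(2)$, the analogous map $(\alpha, q, p) \mapsto (P, q, B\setminus\{q\})$ lands in $\M_{0, 1+(2g+1)}/S_{2g+1}$; here the marked Weierstrass point $p$ is fixed by $\iota$, so $\iota$ still defines a nontrivial automorphism of $(\alpha, q, p)$, and the map is a $\ZZ/2$-gerbe banded by $\langle \iota \rangle$. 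The hypothesis $g \geq 1$ in this case is precisely what is needed for the target moduli space to be stable (requiring $2g+2 \geq 3$ marks on $P$).

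In either case, by property (M5) of Section~\ref{subsec:ckgp} the map induces an isomorphism on rational Chow groups and transfers the CKgP, so the lemma reduces to showing that $\M_{0,n}/S_m$ has trivial rational Chow and the CKgP for the relevant values of $n$ and $m$. The classical presentation
\[\M_{0,n} \;\cong\; \bigl\{(x_1, \ldots, x_{n-3}) \in (\A^1 \setminus \{0,1\})^{n-3} : x_i \neq x_j \text{ for } i \neq j\bigr\}\]
realizes $\M_{0,n}$ as an open subscheme of $\A^{n-3}$, and since $\A^{n-3}$ has trivial rational Chow and the CKgP (as the total space of a vector bundle over a point), both properties descend to $\M_{0,n}$ via excision and property (M1). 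Finally, since $A^*(\M_{0,n}) = \Q$ is automatically $S_m$-invariant, we have $A^*(\M_{0,n}/S_m) = \Q$, and the CKgP descends to the quotient by a standard averaging argument with rational coefficients (taking $S_m$-invariants of the CKgP surjection for $\M_{0,n}$, which is exact in characteristic zero). The main technical content of the proof is verifying the equivalence and gerbe claims in the second paragraph.
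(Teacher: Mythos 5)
Your proposal is correct and follows essentially the same route as the paper: both identify $\sH_{2,g}(2)$ and $\sH_{2,g}(1,1)$ with (quotients related to) $\M_{0,2g+2}/S_{2g+1}$ and $\M_{0,2g+3}/S_{2g+2}$ using that a double cover is determined by its branch divisor together with the hyperelliptic involution, and then conclude from the openness of $\M_{0,n}$ in affine space. The only difference is cosmetic: you phrase the identification at the level of stacks (an equivalence, resp.\ a $\ZZ/2$-gerbe, then invoke (M5)), whereas the paper passes to coarse moduli spaces and uses that rational Chow rings are insensitive to that passage.
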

\begin{proof}
The moduli space $\sH_{2,g}(2)$ can be identified with the moduli space of hyperelliptic curves with a marked Weierstrass point. Such curves are determined by $2g + 2$ branch points in $\pp^1$, one of which is distinguished. Consequently, the coarse moduli space of $\sH_{2,g}(2)$ is $\M_{0,2g+2}/S_{2g+1}$.
Here, $\M_{0,2g+2}$ is an open subset of $\mathbb{A}^{2g-1}$, so it has trivial Chow ring, and therefore so does the quotient.  Since we are working with rational coefficients, the Chow ring of $\sH_{2,g}(2)$ agrees with the Chow ring of its coarse moduli space.

Using the properties in Section \ref{subsec:ckgp}, this sequence of ideas also shows that $\sH_{2,g}(2)$ has the CKgP: we know that affine space has the CKgP, so $\M_{0,2g+2}$ has the CKgP, so $\M_{0,2g+2}/S_{2g+1}$ has the CKgP. Since its coarse moduli space has the CKgP, so does $\sH_{2,g}(2)$.

For $\sH_{2,g}(1,1)$, the moduli space of degree-2 covers with a marked unramified fiber, note that forgetting the second marking in the distinguished fiber defines a map from $\sH_{2,g}(1,1)$ to the moduli space of hyperelliptic curves with a marked point that is not a Weierstrass point. Because of the hyperelliptic involution $i: C \to C$, in the coarse moduli space, the pointed hyperelliptic curve $(C, p)$ is identified with $(C, \; i(p))$. Hence, a hyperelliptic curve with a non-Weierstrass point is specified by $2g + 2$ points on $\pp^1$ and one additional distinguished point. Consequently, the coarse moduli space of $\sH_{2,g}(1,1)$ is $\M_{0,2g+3}/S_{2g+2}$. A similar argument as above then shows that the Chow ring of $\sH_{2,g}(1,1)$ is trivial and $\sH_{2,g}(1,1)$ has the CKgP.
\end{proof}

\begin{remark}
With integral coefficients, the Chow rings of $\sH_{2,g}, \sH_{2,g}(2)$ and $\sH_{2,g}(1,1)$ are non-trivial. These computations are more subtle and have been carried out in \cite{EF} for $\sH_{2,g}$ for even genus $g$, \cite{FV, DL} for odd genus $g$, \cite[Theorem 1.3]{EH} for $\sH_{2,g}(2)$ and \cite[Proposition 3.11]{Landi} for $\sH_{2,g}(1,1)$.
\end{remark}

\subsection{Degree 3}\label{hprime}

Without marked ramification, the Chow rings of the Hurwitz spaces $\sH_{3,g}$ and $\sH'_{3,g}$ were calculated by Canning and the third named author.  The results are the following:

\begin{theorem}[\cite{CaLa:22}]
\label{thm:CLdeg3}
Denote by $T \in A^1(\sH'_{3,g})$ the fundamental class of the locus of covers with a triple ramification point.  Then
\[A^*(\sH'_{3,g})  = \begin{cases} \Q & \text{ if } g=2\\ \Q[T]/(T^2) & \text{ if } g \in \{3,4,5\}\\ \Q[T]/(T^3) & \text{ if } g \geq 6.
\end{cases}\]
Thus, restricting to curves with only simple ramification, we have
\[A^*(\sH_{3,g}) = \Q.\]
\end{theorem}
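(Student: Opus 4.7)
The plan is to execute the strategy outlined in the introduction: realize $\sH'_{3,g}$ as an open substack of the vector bundle $\U \to \B$ whose fiber over a point $[\alpha]$ is the space of sections cutting out $C \hookrightarrow \P E_\alpha^\vee$, and then apply excision to pass from the known Chow ring of $\B$ to the Chow ring of $\sH'_{3,g}$. The statement for $\sH_{3,g}$ will follow immediately, since $\sH_{3,g} = \sH'_{3,g} \setminus T$, so once we know $A^*(\sH'_{3,g})$ is a truncated polynomial ring in $T$, excising $T$ forces every positive-degree class to vanish.

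The first step is to pin down a presentation of $A^*(\B)$. Since $\B$ is an open substack of $\B_{2,g+2}$ and the latter's Chow ring is computed in \cite{Lar:23} in terms of tautological generators (Chern classes of the universal rank-$2$ bundle on the universal $\P^1$-bundle, pushed down via the bundle projection), one obtains $A^*(\B)$ as a quotient of this polynomial ring by the relations coming from Larson's presentation together with the vanishing of the classes of the complement $\B_{2,g+2} \setminus \B$. Because $\U \to \B$ is a vector bundle, pullback gives $A^*(\U) \cong A^*(\B)$, so the generators for $A^*(\sH'_{3,g})$ are understood the moment we understand $A^*(\B)$.

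The next step is to compute the image of the pushforward $i_{\Delta,*} : A^{*-1}(\Delta) \to A^*(\U)$, where $\Delta \subseteq \U$ is the divisor of covers with singular source. The idea is to parameterize $\Delta$ by its own vector-bundle-over-$\B'$ description (the locus where the defining section $f$ acquires a node), compute $[\Delta]$ and enough higher classes in $A^*(\U)$ as polynomials in the tautological generators, and verify that these pushforwards, together with $T$, generate all of $A^{>0}(\U)$. The fundamental class $[T]$ itself is computed via Lemma \ref{topchern} applied to the evaluation map from $\beta^*\beta_*$ of the relevant line bundle to a second-order principal parts bundle along the (universal) ramification divisor of $\alpha : C \to \P^1$, as discussed in Section \ref{subsec:principalparts}. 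By excision, $A^*(\sH'_{3,g})$ is then the quotient $A^*(\U)/\operatorname{im}(i_{\Delta,*})$; the calculation is organized so that after taking this quotient, every tautological generator is expressed as a $\Q$-linear combination of nonnegative powers of $T$, yielding a surjection $\Q[T] \twoheadrightarrow A^*(\sH'_{3,g})$.

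The final step is to determine the precise kernel of $\Q[T] \to A^*(\sH'_{3,g})$. The case distinction on $g$ arises naturally from dimension counts: for $g = 2$ the dimension of $\sH'_{3,g}$ is small enough that $T$ itself must vanish; for $3 \leq g \leq 5$ the relation $T^2 = 0$ drops out of a relation in $A^*(\B_{2,g+2})$ that survives the excision; and for $g \geq 6$ the first relation that survives is $T^3 = 0$. One then verifies non-vanishing of the surviving powers of $T$ by an intersection-theoretic test (e.g.\ pairing with an explicit class supported on a stratum where the dimension is large enough to see the class). Quotienting $\sH'_{3,g}$ by $T$ kills the remaining generator and gives $A^*(\sH_{3,g}) = \Q$.

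The main obstacle is the bookkeeping for the pushforward $i_{\Delta,*}$: one must show that the ideal it generates is rich enough to kill every tautological class except (powers of) $T$, and to do this sharply enough to get the exact truncation exponent in each genus range. This requires both an explicit presentation of $A^*(\B)$ and a careful calculation of the classes of explicit loci inside $\Delta$ (covers with a separating or non-separating node, and their intersections with further tautological cycles), which is where the bulk of the work in \cite{CaLa:22} lies.
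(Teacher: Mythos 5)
This statement is imported from \cite{CaLa:22}; the paper does not prove it but only reviews the strategy in Section~\ref{previous}, and your outline matches that review essentially verbatim: realize $\sH'_{3,g}$ as an open substack of the vector bundle $\U \to \B$, use Larson's presentation of $A^*(\B_{2,g+2})$ for generators, excise the discriminant $\Delta$ to obtain the relations, compute $[T]$ via a second-order principal parts bundle, and pass to $\sH_{3,g}$ by one further excision. Your plan is correct in structure, though of course the substantive content of the theorem (the explicit presentation of $A^*(\B)$, the classes supported on $\Delta$, and the exact truncation exponents in each genus range) lives in the computations of \cite{CaLa:22} that the plan defers.
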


Before turning to Hurwitz spaces with marked ramification, we review the structure of the proof of Theorem \ref{thm:CLdeg3} in more detail. Much of the notation introduced will be relevant for our calculations in Sections \ref{sec:111}--\ref{sec:3}.

As explained in the introduction, the first step in proving Theorem~\ref{thm:CLdeg3} is to reinterpret the data of a degree-3 cover $\alpha: C \rightarrow \P^1$ in $\sH'_{3,g}$ as a pair $(E, f)$, where $E$ is a rank-2, degree-$(g+2)$ vector bundle on $\P^1$ for which the bundle $\det(E^{\vee}) \otimes \Sym^3(E)$ is globally generated, and $f$ is a section of $\det(E^{\vee}) \otimes \Sym^3(E)$.  Passing to the projectivization of $E^{\vee}$ and denoting by $\gamma: \P E^{\vee} \rightarrow \P^1$ the bundle projection, one can view
\begin{equation}
\label{eq:f}
f \in H^0(\P^1, \det(E^{\vee}) \otimes \Sym^3(E)) = H^0(\P E^{\vee}, \gamma^*\det(E^{\vee}) \otimes \O_{\P E^{\vee}}(3)),
\end{equation}
and from this perspective, the original degree-3 cover $\alpha: C \rightarrow \P^1$ is recovered by setting $C = V(f) \subseteq \P E^{\vee}$ and $\alpha = \gamma|_C$.  The result of this is a diagram
\begin{equation}
\label{eq:H'3gdiagram}
\begin{tikzcd}
\sH'_{3,g} \arrow{r}[swap]{\text{open}} & \sU \arrow{d}{\substack{\text{vector}\\ \text{bundle}}} & \\
& \sB \arrow{r}[swap]{\text{open}} & \sB_{2,g+2},
\end{tikzcd}
\end{equation}
in which $\sB_{2,g+2}$ is the moduli stack of rank-2, degree-$(g+2)$ vector bundles on $\P^1$-bundles, $\sB$ is the open substack parameterizing vector bundles $E$ for which $\det(E^{\vee}) \otimes \Sym^3(E)$ is globally generated, and, if $\pi:\sP\to \sB$ denotes the projection from the universal $\P^1$-bundle on $\sB$ and $\sE$ denotes the universal vector bundle on $\sP$,  then $\sU = \pi_*(\det(\sE^{\vee}) \otimes \Sym^3(\sE))$; note that the global generation condition is necessary in order to ensure that $\sU$ is a vector bundle.  One can view a geometric point of $\sU$ as the data of a pair $(E,f)$ as above, and inside this space, $\sH'_{3,g}$ consists of those pairs for which $V(f) \subseteq \P E^{\vee}$ is a smooth curve.

Phrased in families, an $S$-point of $\sB_{2,g+2}$ is a $\P^1$-bundle $P$ over $S$ together with a rank-2, degree-$(g+2)$ vector bundle $E$ on $P$.  The data simply of a $\P^1$-bundle over $S$ is an $S$-point of the stack $B\PGL_2$, so there is a forgetful map
\[\sB_{2,g+2} \rightarrow B\PGL_2.\]
Via this forgetful map, one can replace $\sB_{2,g+2}$ with a slight variant that we denote $\B_{2,g+2}$, defined as the fiber product of the following diagram:
\[\begin{tikzcd}
    \B_{2,g+2}\arrow[r]\arrow[d] & \sB_{2,g+2}\arrow[d]\\
B\SL_2 \arrow[r] & B\PGL_2,
\end{tikzcd}\]
in which the bottom horizontal map is induced by the natural group homomorphism $\SL_2 \rightarrow \PGL_2$.  The data of an $S$-point of $B\SL_2$ is a $\P^1$-bundle over $S$ that arises as the projectivization of a rank-2 vector bundle with trivial determinant; this is a useful perspective when working with the Chow ring of $\B_{2,g+2}$. On the other hand, because $B\SL_2 \rightarrow B\PGL_2$ is a $\mu_2$-gerbe, it follows that $\B_{2,g+2} \rightarrow \sB_{2,g+2}$ is also a $\mu_2$-gerbe, and hence
\[A^*(\B_{2,g+2}) = A^*(\sB_{2,g+2})\]
when working with rational coefficients.  For this reason, the replacement of $\sB_{2,g+2}$ by $\B_{2,g+2}$ is immaterial for our computations in the Chow ring.

Because each of the spaces in the diagram \eqref{eq:H'3gdiagram} admits a map to $\sB_{2,g+2}$ and hence to $B\PGL_2$, each of them similarly has a variant given as the fiber product over $B\SL_2 \rightarrow B\PGL_2$, which we denote by the corresponding calligraphic letters.  The resulting spaces $\H'_{3,g}$, $\U$, and $\B$ are $\mu_2$-gerbes over the original spaces and thus have isomorphic Chow rings (with rational coefficients), so we can freely replace the diagram \eqref{eq:H'3gdiagram} with
\begin{equation*}
\begin{tikzcd}
\H'_{3,g} \arrow{r}[swap]{\text{open}} & \U \arrow{d}{\substack{\text{vector}\\ \text{bundle}}} & \\
& \B \arrow{r}[swap]{\text{open}} & \B_{2,g+2},
\end{tikzcd}
\end{equation*}
for Chow ring computations.  This diagram implies, by excision and the fact that vector bundles induce isomorphisms on Chow rings, that $A^*(\H'_{3,g})$ is generated by classes pulled back from $\B_{2,g+2}$.  The ring $A^*(\B_{2,g+2})$ was calculated in \cite{Lar:23}; for future reference, we record the notation for its generators here.

To set the notation, let $\cP$ denote the universal $\P^1$-bundle on $\B$---which, by construction, is the projectivization of a rank-2 bundle $\V$ with trivial determinant---and denote by $\E$ the universal rank-2, degree-$(g+2)$ bundle on $\cP$.  We thus have a diagram
\begin{equation}
    \label{eq:PB}
    \begin{tikzcd}
        \P \E^{\vee} \arrow[r,"\gamma"] & \cP = \P \V\arrow[r,"\pi"] & \B.
    \end{tikzcd}
\end{equation}
By the projective bundle formula and the fact that $c_1(\V) = c_1(\det\V) = 0$, the pullback homomorphism $\pi^*: A^*(\B) \rightarrow A^*(\cP)$ is injective and
\[A^*(\cP) \cong \frac{A^*(\B)[z]}{(z^2 + \pi^*c_2)},\]
where
\begin{align}
\label{eq:zc2}
z &= c_1(\O_\cP(1)) \in A^1(\cP),\\
\nonumber c_2 &= c_2(\V) \in A^2(\B).
\end{align}
This implies that one can express
\begin{align}
\label{eq:aiai'}
c_1(\E) &= \pi^*(a_1) + \pi^*(a_1')z \in A^1(\cP),\\
\nonumber c_2(\E) &= \pi^*(a_2) + \pi^*(a_2')z \in A^2(\cP)
\end{align}
for uniquely-defined classes $a_i \in A^i(\B)$ and $a_i' \in A^{i-1}(\B)$.  Note, here, that $a_1' \in A^0(\B) \cong \qq$, and it can be computed explicitly from the projection formula: $a_1' = g+2$.  The results of \cite{Lar:23} imply that $A^*(\B)$ is generated by $c_2, a_1, a_2, a_2'$, so by the above discussion,
\[A^*(\H'_{3,g}) = \frac{\Q[c_2, a_1, a_2, a_2']}{R}\]
for an ideal $R$ of relations; computing this ideal explicitly through a careful excision calculus yields Theorem~\ref{thm:CLdeg3}.

We now turn to the analogous calculations of the Chow rings of degree-3 Hurwitz spaces with marked ramification, which will be computed in terms of these same generators. As explained above, it suffices to work with their base changes along $B\SL_2 \to B\PGL_2$, which we denote similarly by caligraphic letters.

\section{The Chow ring of \texorpdfstring{$\H_{3,g}(1,1,1)$}{H3g(1,1,1)}}\label{sec:111}

Similarly to the calculation of $A^*(\H_{3,g})$, we calculate the Chow ring of $\H_{3,g}(1,1,1)$ by viewing it as an open substack of a vector bundle.  The first step toward determining that vector bundle is to realize that a geometric point of $\H_{3,g}(1,1,1)$ is a tuple $(\alpha: C \rightarrow \P^1; \; p_1, p_2, p_3)$, though in fact, the third point $p_3$ is forced once the first two are chosen; thus, we can view the data of a point of $\H_{3,g}(1,1,1)$ as a rank-2, degree-$(g+2)$ vector bundle $E$ on $\P^1$ with a section $f$ as in \eqref{eq:f} and a pair of distinct unramified points $p,q \in \P E^{\vee}$ such that $f(p) = f(q) = 0$ (so that $p,q \in C = V(f)$) and $\gamma(p) = \gamma(q)$ (so that $p$ and $q$ lie in the same fiber of $\alpha = \gamma|_C$). To define a point of $\H_{3,g}(1,1,1)$, we need the additional requirement that $\alpha$ is simply branched and is not ramified at either $p$ or $q$.

With this in mind, and using the notation from \eqref{eq:PB}, we may view $\P \E^\vee \times_\cP \P \E^\vee$ as the moduli space of tuples $(\pp^1, E, p, q)$ where $(\P^1, E) \in \B$ and $p, q \in \pp E^\vee$ with $\gamma(p) = \gamma(q)$. Let $\Delta_{p=q} \subseteq \P \E^\vee \times_\cP \P \E^\vee$ be the closed substack where $p = q$, and define
\[X_{1,1,1} := (\pp \E^\vee \times_{\cP} \pp \E^\vee) \smallsetminus \Delta_{p=q}.\]
Denoting $\U = \pi_*(\det(\E^{\vee}) \otimes \Sym^3(\E))$ as above, there is a vector bundle $\U \times_\B X_{1,1,1}$ over $X_{1,1,1}$, which one can view as the moduli space of tuples $(\pp^1, E, p, q, f)$ with $(\pp^1, E, p, q) \in X_{1,1,1}$ and $f$ as in \eqref{eq:f}.  Within this vector bundle, we would like to restrict to the subspace $V_{1,1,1}$ where $f(p) = f(q) =0$; this is, in fact, a vector subbundle, but proving this requires a bit of work.

To do so, let $\eta_p: X_{1,1,1} \rightarrow  \pp \E^{\vee}$ and $\eta_q: X_{1,1,1} \rightarrow \pp \E^{\vee}$ denote the projections onto the first and second factor of $\pp \E^{\vee} \times_\cP \pp \E^{\vee}$, respectively; in this notation, we have
\[\U \times_\B X_{1,1,1} = \eta_p^*(\pi \circ \gamma)^*\U = \eta_q^*(\pi \circ \gamma)^*\U.\]
Let
\[\W := \gamma^*\det(\E^{\vee}) \otimes \O_{\pp \E^{\vee}}(3),\]
a line bundle on $\pp \E^{\vee}$, and similarly define $W$ on $\P E^\vee$.  Then there is an evaluation map
\[\text{ev}_{p,q}: \eta_p^*(\pi \circ \gamma)^*\U \rightarrow \eta_p^*\W \oplus \eta_q^*\W\]
given, in a fiber over $(\P^1, E, p, q) \in X_{1,1,1}$, as the map
\begin{align}
\label{eq:ev_fiberwise}
\nonumber    H^0(\pp E^\vee, W) &\to W|_p \oplus W|_q\\
    f &\mapsto (f(p), \; f(q))
\end{align}
that evaluates sections of the line bundle $W = \gamma^*\det (E^\vee) \otimes \O_{\pp E^\vee}(3)$ at $p$ and $q$.  The subspace $V_{1,1,1}$ of interest is precisely the preimage of zero under this evaluation map, so in order to prove that $V_{1,1,1}$ is a vector bundle, we must prove the following.

\begin{lemma} \label{ev111}
The evaluation map $\text{ev}_{p,q}: \eta_p^*(\pi \circ \gamma)^*\U \rightarrow \eta_p^*\W \oplus \eta_q^*\W$ is surjective.
\end{lemma}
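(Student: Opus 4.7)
Since both $\eta_p^*(\pi\circ\gamma)^*\U$ and $\eta_p^*\W \oplus \eta_q^*\W$ are locally free sheaves on $X_{1,1,1}$, the plan is to invoke Nakayama's lemma to reduce surjectivity of $\mathrm{ev}_{p,q}$ to fiberwise surjectivity over every geometric point $(\pp^1, E, p, q) \in X_{1,1,1}$. At such a point, the map is the one already described fiberwise in \eqref{eq:ev_fiberwise}: a global section $f \in H^0(\pp E^\vee, W)$ is sent to $(f(p), f(q))$, where $p \neq q$ and both lie in the common fiber $F := \gamma^{-1}(x)$ with $x = \gamma(p) = \gamma(q)$.

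The key idea is to factor this evaluation through restriction to $F$:
\[
H^0(\pp E^\vee, W) \xrightarrow{\,\mathrm{res}\,} H^0(F, W|_F) \xrightarrow{\mathrm{ev}_{p,q}} W|_p \oplus W|_q,
\]
and to verify surjectivity of each factor separately. For the first factor, I would apply the projection formula to identify $\gamma_* W = \det E^\vee \otimes \Sym^3 E$, so that $H^0(\pp E^\vee, W) = H^0(\pp^1, \det E^\vee \otimes \Sym^3 E)$; the restriction map to $H^0(F, W|_F)$ then becomes evaluation at $x \in \pp^1$ of the rank-$4$ bundle $\det E^\vee \otimes \Sym^3 E$. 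This is surjective precisely because, by the defining condition of $\B$, the bundle $\det E^\vee \otimes \Sym^3 E$ is globally generated on $\pp^1$.

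For the second factor, I would observe that $W|_F \cong \O_F(3) \otimes \det E_x^\vee$, which, after trivializing the constant line $\det E_x^\vee$, is isomorphic to $\O_{\pp^1}(3)$. Since $\O_{\pp^1}(3)$ is very ample on $\pp^1$, it separates the two distinct points $p, q \in F$, so evaluation $H^0(\pp^1, \O(3)) \to \O(3)|_p \oplus \O(3)|_q$ is surjective. Composing the two surjections gives fiberwise surjectivity of $\mathrm{ev}_{p,q}$, and Nakayama yields the lemma.

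There is no genuinely hard step here: the argument is a clean two-step unwinding of the evaluation map, and the only inputs are the defining global generation property of $\B$ and the very ampleness of $\O_{\pp^1}(3)$. The only mild subtlety is being careful that restriction of sections to the $\gamma$-fiber $F$ corresponds, under projection formula, to evaluation of the pushforward at $x \in \pp^1$—so that global generation on $\pp^1$ (a condition on $\B$) is exactly what is needed.
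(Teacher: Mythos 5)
Your proof is correct and follows essentially the same route as the paper's: the identical factorization of the fiberwise evaluation through restriction to the fiber $F=\gamma^{-1}(\gamma(p))$, with surjectivity of the second map coming from $p\neq q$ and $\O_{\pp^1}(3)$ separating points. The only (cosmetic) difference is in the first step: you invoke global generation of $\gamma_*W=\det E^\vee\otimes\Sym^3 E$ directly as surjectivity of evaluation at the point $\gamma(p)$, whereas the paper derives the same surjectivity from the twisted sequence and the vanishing $H^1(\pp^1,(\gamma_*W)(-1))=0$; these are interchangeable, both resting on the defining condition of $\B$.
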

\begin{proof}
It suffices to prove surjectivity fiberwise, and to do so, we factor the fiberwise map \eqref{eq:ev_fiberwise} as
\begin{equation} \label{factor} H^0(\pp E^\vee, W) \to H^0(\gamma^{-1}(\gamma(p)), \; \O_{\pp^1}(3)) \to W|_p \oplus W|_q.
\end{equation}
The first map restricts $f$ to a cubic polynomial on the fiber of $\gamma$ containing $p$, and the second map evaluates that cubic polynomial at $p$ and $q$. Since $p \neq q$, the second map is clearly surjective. Meanwhile, surjectivity of the first map follows from considering the following exact sequence of sheaves on $\pp E^\vee$:
\[0 \rightarrow W \otimes \gamma^*\O_{\pp^1}(-1) \rightarrow W \rightarrow W|_{\gamma^{-1}\gamma(p)} \rightarrow 0.\]
Applying $\gamma_*$, we obtain a sequence of sheaves on $\pp^1$:
\begin{equation}\label{p1seq}
    0\rightarrow (\gamma_*W)(-1) \rightarrow \gamma_*W \rightarrow (\gamma_*W)|_{\gamma(p)} \rightarrow 0.
\end{equation}
The key fact we need is that $H^1(\pp^1, (\gamma_*W)(-1)) = 0$, which will imply that the second map in \eqref{p1seq} induces a surjection on global sections, which is the first map in \eqref{factor}. To see this cohomological vanishing, recall that $\gamma_*W = \det (E^\vee) \otimes \Sym^3(E)$, which is globally generated by the definition of $\B \subseteq \B_{2,g+2}$. On $\pp^1$, every vector bundle splits as a direct sum of line bundles, and being globally generated is equivalent to all summands having degree $\geq 0$. It follows that every summand of $(\gamma_*W)(-1)$ has degree $\geq -1$, so $H^1(\pp^1, (\gamma_*W)(-1)) = 0$.
\end{proof}

In light of this lemma, we have a vector subbundle
\[V_{1,1,1}:= \ker(\text{ev}_{p,q}) \subseteq \U \times_\B X_{1,1,1},\]
which can be viewed as the moduli space of tuples $(\pp^1, E, p, q, f)$ where $(\P^1, E) \in \B$, $f$ is a section as in \eqref{eq:f}, and $p,q \in \pp E^{\vee}$ are distinct points such that $\gamma(p) = \gamma(q)$ and $f(p) = f(q) = 0$.  The discussion in the first paragraph of this section shows that one can realize $\H_{3,g}(1,1,1) \hookrightarrow V_{1,1,1}$ as the open substack where $V(f) \subseteq \pp E^{\vee} \rightarrow \pp^1$ is a smooth, simply-branched cover and $\gamma^{-1}(\gamma(p)) \cap V(f)$ consists of three distinct points.  To summarize, we have the following commutative diagram:
\begin{equation} \label{diagram111}
\begin{tikzcd}
\H_{3,g} \arrow{d}[swap]{\text{open}} & & \arrow{ll} \H_{3,g}(1,1,1) \arrow{d}{\text{open}} \\
\U \arrow{d}[swap]{\text{vector bundle}} & \U \times_{\B} X_{1,1,1} \arrow{d} \arrow{l} & V_{1,1,1}  \arrow{l}[swap]{\text{subbundle}} \arrow{dl}{\text{vector bundle}} \\
\B & \arrow{l} X_{1,1,1}.
\end{tikzcd}
\end{equation}

With this diagram, we can quickly verify the CKgP:
\begin{lemma} \label{ckgp111}
$\sH_{3,g}(1,1,1)$ has the CKgP.
\end{lemma}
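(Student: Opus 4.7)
The plan is to chase CKgP along the diagram \eqref{diagram111}, starting from the base $\B$ and propagating upward via the stability properties of the CKgP listed in Section~\ref{subsec:ckgp}. The key observation is that every map in the diagram is of one of the types (M1)--(M5), so once we know the base has the CKgP, it transfers to each successive stage.

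Concretely, one first records that $\B_{2,g+2}$ has the CKgP. This is implicit in the computation of $A^*(\B_{2,g+2})$ in \cite{Lar:23} (and is used similarly in \cite{CaLa:22}): $\B_{2,g+2}$ is built out of classifying stacks of special groups by a sequence of vector bundles and projective bundles, each preserving the CKgP. Since $\B \subseteq \B_{2,g+2}$ is an open substack, $\B$ has the CKgP by (M1). Next, $\cP = \P\V \to \B$ is a projective bundle and $\gamma\colon \P\E^\vee \to \cP$ is a projective bundle, so by two applications of (M2), both $\cP$ and $\P\E^\vee$ have the CKgP.

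The one mildly nontrivial step is the fiber product $\P\E^\vee \times_\cP \P\E^\vee$ appearing in the definition of $X_{1,1,1}$. Since $\gamma\colon \P\E^\vee \to \cP$ is the projectivization of the rank-$2$ bundle $\E^\vee$, its base change along itself,
\[\eta_p \colon \P\E^\vee \times_\cP \P\E^\vee \longrightarrow \P\E^\vee,\]
is the projectivization of the rank-$2$ bundle $\gamma^*\E^\vee$, hence is again a projective bundle. Therefore $\P\E^\vee \times_\cP \P\E^\vee$ has the CKgP by (M2), and then the open substack $X_{1,1,1}$ obtained by removing the diagonal has it by (M1).

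To finish, $V_{1,1,1} \to X_{1,1,1}$ is the total space of a vector bundle (by Lemma~\ref{ev111} it is the kernel of a surjective bundle map), so $V_{1,1,1}$ has the CKgP by (M3); the open substack $\H_{3,g}(1,1,1) \subseteq V_{1,1,1}$ then has the CKgP by (M1); and finally, since $\H_{3,g}(1,1,1) \to \sH_{3,g}(1,1,1)$ is a $\mu_2$-gerbe, $\sH_{3,g}(1,1,1)$ has the CKgP by (M5). There is no real obstacle beyond bookkeeping once one verifies the CKgP for $\B_{2,g+2}$; the only subtlety is checking that the two factors in $\P\E^\vee\times_\cP \P\E^\vee$ can be simultaneously realized as a projective bundle over one of them, which the above argument makes explicit.
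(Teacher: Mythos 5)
Your proposal is correct and follows essentially the same route as the paper: transfer the CKgP up the diagram \eqref{diagram111} using the stability properties (M1)--(M3), then descend along the $\mu_2$-gerbe via (M5). The only cosmetic difference is that the paper cites \cite[Lemma 9.2]{CaLa:23} directly for the CKgP of $\B$ rather than deducing it from $\B_{2,g+2}$, and it treats $X_{1,1,1} \to \pp\E^\vee \times_\cP \pp\E^\vee \to \pp\E^\vee$ in the same chain without singling out the fiber-product step you make explicit.
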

\begin{proof}
Because the CKgP is preserved under morphisms of type (M5) in Subsection~\ref{subsec:ckgp}, and the map $\H_{3,g}(1,1,1) \to \sH_{3,g}(1,1,1)$ is a $\mu_2$-gerbe, it suffices to show that $\H_{3,g}(1,1,1)$ has the CKgP.
It was shown in the proof of \cite[Lemma 9.2]{CaLa:23} that $\B$ has the CKgP. Moreover, each of the maps
\[\H_{3,g}(1,1,1) \to V_{1,1,1} \to X_{1,1,1} \to (\pp \E^\vee \times_\cP \pp \E^\vee) \to \pp \E^\vee \to \cP \to \B\]
is a morphism allowed in (M1) -- (M3). Hence, $\H_{3,g}(1,1,1)$ has the CKgP.
\end{proof}

To compute the Chow ring of $\H_{3,g}(1,1,1)$, let
\[\zeta: = c_1(\O_{\pp \E^{\vee}}(1)) \in A^1(\pp \E^{\vee}),\]
and let
\[\zeta_p:= \eta_p^*\zeta, \;\; \zeta_q:= \eta_q^*\zeta \in A^1(X_{1,1,1}).\]
With $z$ and $a_1$ as in \eqref{eq:zc2} and \eqref{eq:aiai'}, we have the following relations in $A^*(X_{1,1,1})$.  (Here, we omit the pullbacks from the notation for classes on $X_{1,1,1}$ pulled back from $\B$ or $\cP$, as these pullbacks are injective by the projective bundle theorem.)

\begin{lemma} \label{zrel111}
We have $\zeta_p + \zeta_q  - (g+2)z - a_1 = 0 \in A^*(X_{1,1,1})$.
\end{lemma}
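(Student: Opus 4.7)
The approach is to reduce the lemma to a computation of the diagonal class $[\Delta_{p=q}] \in A^1(\P\E^\vee \times_\cP \P\E^\vee)$. Since $X_{1,1,1}$ is, by definition, the open complement of $\Delta_{p=q}$, the excision sequence tells us that classes supported on the diagonal---in particular $[\Delta_{p=q}]$ itself---become zero upon restriction to $A^*(X_{1,1,1})$. Thus the lemma will follow once we establish
\[ [\Delta_{p=q}] \;=\; \zeta_p + \zeta_q - (g+2)z - a_1 \;\in\; A^1(\P\E^\vee \times_\cP \P\E^\vee). \]

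To compute this class, I would exhibit $\Delta_{p=q}$ as the zero locus of a natural map of line bundles on the fiber product. Starting from the tautological sequence
\[ 0 \to \O_{\P\E^\vee}(-1) \to \gamma^*\E^\vee \to Q \to 0 \]
on $\P\E^\vee$, whose quotient satisfies $c_1(Q) = \zeta - c_1(\E)$, and using $\gamma \circ \eta_p = \gamma \circ \eta_q$, I would form the composition
\[ \eta_q^*\O_{\P\E^\vee}(-1) \;\hookrightarrow\; \eta_q^*\gamma^*\E^\vee \;=\; \eta_p^*\gamma^*\E^\vee \;\twoheadrightarrow\; \eta_p^*Q. \]
Pointwise at $(x, L_p, L_q) \in \P\E^\vee \times_\cP \P\E^\vee$, this is the map $L_q \hookrightarrow \E^\vee_x \twoheadrightarrow \E^\vee_x / L_p$, which vanishes iff $L_q \subseteq L_p$, i.e. (since both are lines) iff $L_p = L_q$. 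Its zero locus is therefore precisely $\Delta_{p=q}$.

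Since the diagonal, being the image of a section of the $\P^1$-bundle $\eta_p$, is pure of the expected codimension $1$, Lemma~\ref{topchern} applies and yields
\[ [\Delta_{p=q}] \;=\; c_1\bigl(\eta_q^*\O_{\P\E^\vee}(1) \otimes \eta_p^*Q\bigr) \;=\; \zeta_q + \eta_p^*(\zeta - c_1(\E)) \;=\; \zeta_p + \zeta_q - c_1(\E), \]
which equals $\zeta_p + \zeta_q - (g+2)z - a_1$ by \eqref{eq:aiai'}. I do not anticipate a significant obstacle: the only step requiring a little care is the codimension check needed to invoke Lemma~\ref{topchern}, and that is automatic from the observation that $\Delta_{p=q}$ is a section of a $\P^1$-bundle.
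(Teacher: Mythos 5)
Your proposal is correct and follows essentially the same route as the paper: both exhibit $\Delta_{p=q}$ as the vanishing locus of the composite $\O(-1) \hookrightarrow \gamma^*\E^\vee \twoheadrightarrow \mathcal{Q}$ pulled back to the fiber product (the paper uses $\eta_p^*\O(-1) \to \eta_q^*\mathcal{Q}$ rather than your $\eta_q^*\O(-1) \to \eta_p^*\mathcal{Q}$, which is immaterial by symmetry), compute its first Chern class, and conclude by excision. Your explicit codimension check for invoking Lemma~\ref{topchern} is a point the paper leaves implicit.
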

\begin{proof}
This relation comes from  computing the fundamental class of $\Delta_{p=q} \subseteq \pp \E^\vee \times_{\cP} \pp \E^\vee$, which lies in the complement of $X_{1,1,1}$ by definition.
To compute its class, we realize $\Delta_{p=q}$ as the vanishing locus of a section of a line bundle. Consider the tautological sequence
\begin{equation} \label{es} 0 \rightarrow \O_{\pp \E^\vee}(-1) \rightarrow \gamma^*\E^\vee \rightarrow \mathcal{Q} \rightarrow 0
\end{equation}
on $\pp \E^{\vee}$.  From this sequence, we can compute
\begin{equation} \label{cQ}
c_1(\mathcal{Q}) = \gamma^*c_1(\E^\vee) - c_1(\O_{\pp \E^\vee}(-1)) =
-(a_1 + (g+2)z) + \zeta,
\end{equation}
where the second equality follows from \eqref{eq:aiai'} and the subsequent discussion.  Now consider the composition
\[\eta_p^*\O_{\pp \E^\vee}(-1) \to \eta_p^*\gamma^*\E^\vee\cong \eta_q^*\gamma^*\E^\vee \rightarrow \eta_q^*\mathcal{Q}.
\]
This map vanishes precisely when the subspaces
$\eta_p^*\O_{\pp \E^\vee}(-1)$
and $\eta_q^*\O_{\pp \E^\vee}(-1)$ agree---that is, when $p = q$. In other words, $\Delta_{p=q}$ can be described as the vanishing locus of a section of the line bundle $\eta_p^*\O_{\pp \E^\vee}(1) \otimes \eta_q^*\mathcal{Q}$. Hence, its fundamental class is given by
\[[\Delta_{p = q}] = c_1\left(\eta_p^*\O_{\pp \E^\vee}(1) \otimes \eta_q^*\mathcal{Q}\right) =
c_1(\eta_p^*\O_{\pp \E^\vee}(1)) + c_1(\eta_q^*\mathcal{Q}) =
\zeta_p + \zeta_q  - (g+2)z - a_1. \]
By excision, the above class vanishes in the Chow ring of the complement of $\Delta_{p=q}$, which is $X_{1,1,1}$.
\end{proof}

Because $V_{1,1,1} \rightarrow X_{1,1,1}$ is a vector bundle, we can view $\zeta_p, \zeta_q \in A^*(V_{1,1,1})$.  Restricting to the open substack $\H_{3,g}(1,1,1) \subseteq V_{1,1,1}$, we find that these classes both vanish.

\begin{lemma} \label{zv}
We have $\zeta_p = \zeta_q = 0 \in A^*(\H_{3,g}(1,1,1))$.
\end{lemma}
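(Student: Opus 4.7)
The plan is to exhibit $\zeta_p$ as the class of a Cartier divisor on $V_{1,1,1}$ whose support lies entirely outside of $\H_{3,g}(1,1,1)$. Excision then forces $\zeta_p = 0$ in $A^*(\H_{3,g}(1,1,1))$, and the symmetric argument with the roles of $p$ and $q$ interchanged takes care of $\zeta_q$.

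The relevant divisor is the vanishing locus of the vertical derivative $f_y(p)$ of the universal cubic $f$. Since $f(p) = 0$ holds identically on $V_{1,1,1}$, the first principal parts filtration~\eqref{filt} together with the evaluation map~\eqref{bev} shows that $f_y(p)$ descends to a global section of the line bundle
\[L_p := (\eta_p\circ\rho)^*\bigl(\Omega_{\pp\E^\vee/\cP}\otimes \W\bigr)\]
on $V_{1,1,1}$, where $\rho: V_{1,1,1}\to X_{1,1,1}$ is the bundle projection. To locate the vanishing locus, work on a fiber of $\gamma$ and write $f|_{\gamma^{-1}(\gamma(p))}$ as a cubic with roots $p$, $q$, and a third point $r$; then $f_y(p)$ vanishes precisely when $p$ coincides with $q$ or $r$. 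The case $p=q$ is excluded everywhere on $V_{1,1,1}$ by construction, while $p=r$ means that $\alpha$ is ramified at $p$ and so is forbidden in $\H_{3,g}(1,1,1)$. Hence $V(f_y(p)) \subseteq V_{1,1,1}\setminus \H_{3,g}(1,1,1)$.

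The remaining step is to identify $c_1(L_p) = \zeta_p$. From the tautological sequence~\eqref{es} one reads off $c_1(\Omega_{\pp\E^\vee/\cP}) = \gamma^*c_1(\E) - 2\zeta$, while the definition of $\W$ gives $c_1(\W) = -\gamma^*c_1(\E) + 3\zeta$. These contributions cancel to yield $c_1(\Omega_{\pp\E^\vee/\cP}\otimes\W) = \zeta$, and pulling back along $\eta_p\circ\rho$ gives $c_1(L_p) = \zeta_p$. Because $L_p$ is a line bundle and $f_y(p)$ does not vanish identically (it is nonzero on the nonempty open $\H_{3,g}(1,1,1)$), its vanishing locus is a proper Cartier divisor with class $\zeta_p$ in $A^*(V_{1,1,1})$; by excision this class restricts to zero on $\H_{3,g}(1,1,1)$. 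The only step requiring real care is the global construction of the section $f_y(p)$ of $L_p$: it is essential that $f(p) = 0$ holds \emph{identically} on $V_{1,1,1}$, so that the image of the tautological $f$ in $\eta_p^*\mathscr{P}^1_{\pp\E^\vee/\cP}(\W)$ is forced into the subbundle $\eta_p^*(\Omega_{\pp\E^\vee/\cP}\otimes\W)$ and thereby defines a global section of $L_p$ rather than merely of the principal parts bundle.
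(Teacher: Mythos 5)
Your proposal is correct and follows essentially the same route as the paper: realize the locus where $p$ is a ramification point as the vanishing of the section $f_y(p)$ of $\eta_p^*(\Omega_{\pp\E^\vee/\cP}\otimes\W)$ on $V_{1,1,1}$ (well-defined because $f(p)=0$ holds identically, via the filtration \eqref{filt}), note that this locus lies in the complement of $\H_{3,g}(1,1,1)$, and compute $c_1(\Omega_{\pp\E^\vee/\cP}\otimes\W)=\zeta_p$ from the Euler sequence, so that excision kills $\zeta_p$ (and symmetrically $\zeta_q$). The only cosmetic difference is that you identify the vanishing locus by factoring the fiberwise cubic through its roots rather than simply citing the ramification condition, and you should note that the degenerate case where $f$ vanishes identically on the fiber also lands outside $\H_{3,g}(1,1,1)$, so the containment still holds.
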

\begin{proof}
These relations come from the fact that $p$ and $q$ are prohibited from being ramification points of $\alpha = \gamma|_{V(f)}$.  Since we already have $f(p) = 0$ in $V_{1,1,1}$, it is one additional condition for $p$ to be ramified; if $x$ and $y$ are local coordinates around $p \in \pp E^{\vee}$ in the base and fiber direction respectively under the bundle projection $\gamma: \pp E^{\vee} \rightarrow \pp^1$, the additional condition is $f_y(p) = 0$.  The divisor in $V_{1,1,1}$ defined by $f_y (p) = 0$ is the vanishing locus of the map $V_{1,1,1} \to \eta_p^*(\Omega_{\pp \E^\vee/\cP} \otimes \W)$ given by $(\P^1, E, p, q, f) \mapsto f_y(p)$.
It lies in the complement of $\H_{3,g}(1,1,1)$, and therefore by Lemma \ref{topchern}, it follows that
\begin{equation} \label{c1e} 0 = c_1(\eta_p^*(\Omega_{\pp \E^\vee/\cP} \otimes \W)) \in A^*(\H_{3,g}(1,1,1)).
\end{equation}

To compute the above Chern class, we first compute $c_1(\Omega_{\pp \E^\vee/\cP})$ using the tautological sequence \eqref{es}. In terms of the bundles introduced there, the relative Euler sequence says that $T_{\pp\E^\vee/\cP} = \mathcal{Q} \otimes \O_{\pp\E^\vee}(1)$, so $\Omega_{\pp \E^\vee/\cP} = \mathcal{Q}^\vee \otimes \O_{\pp \E^\vee}(-1)$.  Taking first Chern classes and using \eqref{cQ}, we find that
\begin{equation}
\label{eq:c1omega}
c_1(\Omega_{\pp \E^\vee/\cP}) = -2\zeta + a_1 + (g+2)z.
\end{equation}
Thus,
\[c_1(\eta_p^*(\Omega_{\pp \E^\vee/\cP} \otimes \W)) =  (-2\zeta_p + a_1 + (g+2)z) + (3\zeta_p - a_1 - (g+2)z) = \zeta_p.\]
Hence, by \eqref{c1e}, we have $\zeta_p = 0$. Since $q$ is also prohibited from being a ramification point on $\H_{3,g}(1,1,1)$, a similar argument shows $\zeta_q = 0$.
\end{proof}

Finally, from here, we can deduce that the Chow ring of $\H_{3,g}(1,1,1)$ is trivial.

\begin{lemma}\label{lem:111}
$A^*(\sH_{3,g}(1,1,1)) = \qq$.
\end{lemma}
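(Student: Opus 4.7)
The plan is to show that every generator of $A^*(\H_{3,g}(1,1,1))$ in positive degree vanishes. Since $\H_{3,g}(1,1,1) \to \sH_{3,g}(1,1,1)$ is a $\mu_2$-gerbe, it suffices to work with $\H_{3,g}(1,1,1)$. From diagram \eqref{diagram111}, the open immersion $\H_{3,g}(1,1,1) \hookrightarrow V_{1,1,1}$ together with the vector bundle map $V_{1,1,1} \to X_{1,1,1}$ yields a surjection $A^*(X_{1,1,1}) \twoheadrightarrow A^*(\H_{3,g}(1,1,1))$. Applying the projective bundle formula twice to the tower $\pp \E^{\vee} \times_{\cP} \pp \E^{\vee} \to \cP \to \B$ and then using excision of $\Delta_{p=q}$, we see that $A^*(X_{1,1,1})$ is generated as an algebra over (the pullback of) $A^*(\B)$ by $z$, $\zeta_p$, and $\zeta_q$. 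Hence $A^*(\H_{3,g}(1,1,1))$ is generated by the classes $c_2, a_1, a_2, a_2'$ pulled back from $A^*(\B)$ together with $z, \zeta_p, \zeta_q$.

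Next, I use the natural forgetful map $\H_{3,g}(1,1,1) \to \H_{3,g}$ appearing along the top of \eqref{diagram111}. Composing with the generation statement for $A^*(\H_{3,g})$ reviewed in Section \ref{previous}, the homomorphism $A^*(\B) \to A^*(\H_{3,g}(1,1,1))$ factors through $A^*(\H_{3,g}) = \qq$ by Theorem \ref{thm:CLdeg3}. Consequently the positive-degree generators $c_2, a_1, a_2, a_2'$ all vanish in $A^*(\H_{3,g}(1,1,1))$.

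It remains to kill $z, \zeta_p, \zeta_q$. Lemma \ref{zv} gives $\zeta_p = \zeta_q = 0$ directly. Plugging these, together with $a_1 = 0$, into the relation $\zeta_p + \zeta_q - (g+2)z - a_1 = 0$ from Lemma \ref{zrel111} yields $(g+2)z = 0$, and since $g + 2 > 0$ we conclude $z = 0$. Therefore every generator of positive degree vanishes, so $A^*(\H_{3,g}(1,1,1)) = \qq$, and the equality $A^*(\sH_{3,g}(1,1,1)) = \qq$ follows.

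There is no real obstacle here, as all the genuine work has been done in Lemmas \ref{zrel111} and \ref{zv}; the mild point to verify is that the forgetful map $\H_{3,g}(1,1,1) \to \H_{3,g}$ exists and is compatible with the pullbacks from $\B$, which is immediate from the fact that the top rectangle of \eqref{basic} is a fiber square (as noted in the introduction) together with the commutativity of \eqref{diagram111}.
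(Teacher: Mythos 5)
Your proposal is correct and follows essentially the same argument as the paper: both use the surjection $A^*(X_{1,1,1}) \twoheadrightarrow A^*(\H_{3,g}(1,1,1))$, the vanishing of $\zeta_p,\zeta_q$ from Lemma \ref{zv}, the relation of Lemma \ref{zrel111}, and the factorization of $A^*(\B) \to A^*(\H_{3,g}(1,1,1))$ through the trivial ring $A^*(\H_{3,g})$. The only (immaterial) difference is the order of elimination: the paper solves for $z$ in terms of $\zeta_p,\zeta_q,a_1$ first, whereas you kill $\zeta_p,\zeta_q,a_1$ first and then deduce $z=0$.
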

\begin{proof}
Since the Chow rings of $\sH_{3,g}(1,1,1)$ and $\H_{3,g}(1,1,1)$ are isomorphic, it suffices to show that $A^*(\H_{3,g}(1,1,1)) = \qq$.
By the diagram in \eqref{diagram111}, there is a surjection of Chow rings
\begin{equation} \label{s1} A^*(X_{1,1,1}) \cong A^*(V_{1,1,1}) \to A^*(\H_{3,g}(1,1,1)).
\end{equation}
Thus, it suffices to show that each of the generators of $A^*(X_{1,1,1})$ is sent to zero under this map.
By the projective bundle theorem, $A^*(\pp \E^\vee \times_{\cP} \pp \E^\vee)$ is generated as an algebra over $A^*(\B)$ by $\zeta_p, \zeta_q,$ and $z$.
Using the relation in Lemma \ref{zrel111}, we can solve for $z$ in terms of $\zeta_p, \zeta_q$ and $a_1$ to see that $A^*(X_{1,1,1})$ is generated over $A^*(\B)$ by $\zeta_p$ and $\zeta_q$.
By Lemma \ref{zv}, each of $\zeta_p$ and $\zeta_q$ is sent to zero under $A^*(X_{1,1,1}) \to A^*(\H_{3,g}(1,1,1))$. Thus, it remains to show that the pullback map $A^*(\B) \to A^*(\H_{3,g}(1,1,1))$ sends each of the generators of $A^*(\B)$ to zero.
To see this, recall the diagram \eqref{diagram111}. Since the diagram commutes, the pullback map $A^*(\B) \to A^*(\H_{3,g}(1,1,1))$ factors through $A^*(\B) \to A^*(\H_{3,g})$.
However, $A^*(\H_{3,g})$ is trivial, so the pullback map $A^*(\B) \to A^*(\H_{3,g}(1,1,1))$ must send each of the generators of $A^*(\B)$ to zero. Since the pullback map \eqref{s1} is surjective, it follows that $A^i(\H_{3,g}(1,1,1)) = 0$ for all $i > 0$.
\end{proof}

\section{The Chow ring of \texorpdfstring{$\H_{3,g}(2,1)$}{H3g(2,1)}}\label{sec:21}

We follow a similar strategy to the previous section; this time, the data of a geometric point of $\H_{3,g}(2,1)$ is a rank-2, degree-$(g+2)$ vector bundle $E$ on $\P^1$ with a section $f$ as in \eqref{eq:f} and a point $p \in \P E^{\vee}$ such that $f(p) = f_y(p) = 0$ but $f_{y^2}(p) \neq 0$, where $y$ again denotes a local coordinate on $\P E^{\vee}$ in the fiber direction with respect to $\gamma: \P E^{\vee} \rightarrow \P^1$.  (Note, here, that the unramified point in the fiber $\gamma^{-1}(\gamma(p))$ is determined once the doubly-ramified point $p$ is chosen.)  Thus, in this case, we work over
\[X_{2,1} := \pp \E^\vee,\]
which can be viewed as the moduli space of tuples $(\pp^1, E, p)$ with $p \in \pp E^\vee$.

Similarly to Lemma~\ref{ev111}, we consider an evaluation map
\[\text{ev}_1:(\pi \circ \gamma)^* \U \to \mathscr{P}^1_{\pp \E^\vee/\cP}(\W)\]
of vector bundles on $X_{2,1}$; on the fiber over $(\P^1, E, p) \in X_{2,1}$, this is the map
\begin{align}
\label{eq:ev21_fiberwise}
\nonumber H^0(\pp E^\vee, W) &\to W|_{2p}\\
f &\mapsto (f(p), f_y(p))
\end{align}
that evaluates a section of the line bundle $W = \gamma^*\det (E^\vee)\otimes \O_{\pp E^\vee}(3)$ in a first-order neighborhood of $p$ contained in the vertical fiber.

\begin{lemma}
\label{ev21}
The principal parts evaluation map $\text{ev}_1:(\pi \circ \gamma)^* \U \to \mathscr{P}^1_{\pp \E^\vee/\cP}(\W)$ of vector bundles on $X_{2,1}$ is surjective.
\end{lemma}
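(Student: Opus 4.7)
The plan is to imitate the proof of Lemma~\ref{ev111}. Surjectivity can be checked fiberwise, so fix a point $(\pp^1, E, p) \in X_{2,1}$. On this fiber, $\text{ev}_1$ becomes the map \eqref{eq:ev21_fiberwise}, which I would factor as
\[
H^0(\pp E^\vee, W) \longrightarrow H^0(\gamma^{-1}(\gamma(p)), \, W|_{\gamma^{-1}(\gamma(p))}) \longrightarrow W|_{2p},
\]
where the first arrow is restriction to the vertical fiber $F := \gamma^{-1}(\gamma(p)) \cong \pp^1$ and the second is the order-$1$ principal parts evaluation at $p$ inside $F$.

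The second map is easy: under the identification $W|_F \cong \O_{\pp^1}(3)$, it sends a cubic polynomial $f$ to $(f(p), f_y(p))$, which is clearly surjective since cubics on $\pp^1$ have arbitrary value and vertical derivative at any given point (indeed, this is the standard $2$-jet evaluation, which is surjective for $\O_{\pp^1}(d)$ as long as $d \geq 1$).

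For the first map, the argument is identical to the one in Lemma~\ref{ev111}. Pushing down the short exact sequence
\[
0 \to W \otimes \gamma^*\O_{\pp^1}(-1) \to W \to W|_F \to 0
\]
via $\gamma$ yields
\[
0 \to (\gamma_*W)(-1) \to \gamma_*W \to (\gamma_*W)|_{\gamma(p)} \to 0
\]
on $\pp^1$, so it suffices to check $H^1(\pp^1, (\gamma_*W)(-1)) = 0$. But $\gamma_*W = \det(E^\vee) \otimes \Sym^3(E)$ is globally generated by the definition of $\B$, hence splits into line bundles of degree $\geq 0$ on $\pp^1$, and twisting by $\O_{\pp^1}(-1)$ gives degrees $\geq -1$, for which $H^1$ vanishes.

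No step here is a genuine obstacle, since the same tools from Lemma~\ref{ev111} apply, the only difference being that the second map in the factorization is a principal parts evaluation on a single $\pp^1$-fiber rather than evaluation at two distinct points.
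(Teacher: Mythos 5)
Your proof is correct and follows essentially the same route as the paper: factor the fiberwise evaluation through restriction to the vertical fiber $\gamma^{-1}(\gamma(p))$, reuse the $H^1(\pp^1,(\gamma_*W)(-1))=0$ argument from Lemma~\ref{ev111} for the first map, and observe that first-order evaluation of a cubic on $\pp^1$ is surjective. (The only nitpick is a terminology slip where you call the order-$1$ principal parts evaluation a ``$2$-jet''; the mathematics is unaffected.)
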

\begin{proof}
Similarly to the proof of Lemma \ref{ev111}, the fiberwise map \eqref{eq:ev21_fiberwise} map factors as
\begin{equation} \label{factor2} H^0(\pp E^\vee, W) \to H^0(\gamma^{-1}\gamma(p), \O_{\pp^1}(3)) \to W|_{2p}.
\end{equation}
In fact, we have already shown that the first map is surjective in the proof of Lemma \ref{ev111}. Meanwhile, the second map is surjective because it sends a degree-three polynomial on $\pp^1$ to its restriction to a first-order neighborhood of $p$.
\end{proof}

Since the principal parts evaluation map is surjective, its kernel is a vector bundle. We define $V_{2,1}$ to be the total space of the kernel:
\[V_{2,1} := \ker(\text{ev}_1)\subseteq \U \times_\B X_{2,1}.\]
Thinking of $\U$ as the moduli space of tuples $(\pp^1, E, f)$, we can describe $V_{2,1}$ as the moduli space of tuples $(\pp^1, E, p, f)$ such that $f(p) = f_y(p) = 0$, i.e., $p$ is a ramification point of $V(f) \to \pp^1$.

By the discussion at the beginning of the section, there is a natural map $\H_{3,g}(2,1) \to V_{2,1}$, which realizes $\H_{3,g}(2,1)$ as the open substack of $V_{2,1}$ where $V(f) \subseteq \pp E^\vee \to \pp^1$ is a smooth, simply-branched cover.  In summary, then, we have a commutative diagram:
\begin{equation} \label{diagram21}
\begin{tikzcd}
\H_{3,g} \arrow{d}[swap]{\text{open}} & & \arrow{ll} \H_{3,g}(2,1) \arrow{d}{\text{open}} \\
\U \arrow{d}[swap]{\text{vector bundle}} & \U \times_{\B} X_{2,1} \arrow{d} \arrow{l} & V_{2,1}  \arrow{l}[swap]{\text{subbundle}} \arrow{dl}{\text{vector bundle}} \\
\B & \arrow{l} X_{2,1}.
\end{tikzcd}
\end{equation}

An argument very similar to Lemma \ref{ckgp111} shows the following.
\begin{lemma} \label{ckgp21}
$\sH_{3,g}(2,1)$ has the CKgP.
\end{lemma}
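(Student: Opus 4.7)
The plan is to follow exactly the strategy of Lemma \ref{ckgp111}, replacing the diagram \eqref{diagram111} with its analogue \eqref{diagram21}. Since the map $\H_{3,g}(2,1) \to \sH_{3,g}(2,1)$ is a $\mu_2$-gerbe banded by a finite group, item (M5) of Subsection~\ref{subsec:ckgp} reduces the statement to showing that $\H_{3,g}(2,1)$ has the CKgP.

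For this, I would use that $\B$ has the CKgP, as was established in the proof of \cite[Lemma 9.2]{CaLa:23} and already invoked in Lemma~\ref{ckgp111}. The remaining step is to observe that each arrow in the chain
\[
\H_{3,g}(2,1) \longrightarrow V_{2,1} \longrightarrow \U \times_{\B} X_{2,1} \longrightarrow X_{2,1} = \pp\E^\vee \longrightarrow \cP \longrightarrow \B
\]
is one of the CKgP-transferring morphisms listed in Subsection~\ref{subsec:ckgp}. Specifically, $\H_{3,g}(2,1) \hookrightarrow V_{2,1}$ is an open embedding (M1); $V_{2,1} \hookrightarrow \U\times_\B X_{2,1}$ is an open embedding into a vector bundle over $X_{2,1}$, and $V_{2,1}$ itself is the total space of a vector bundle over $X_{2,1}$ by Lemma~\ref{ev21}, so $V_{2,1}\to X_{2,1}$ falls under (M3); and both $\pp\E^\vee \to \cP$ and $\cP \to \B$ are projective bundles, covered by (M2). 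Applying these transfer properties successively starting from $\B$ gives the CKgP for $\H_{3,g}(2,1)$, and hence for $\sH_{3,g}(2,1)$.

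There is no real obstacle here: the argument is purely formal once the factorization \eqref{diagram21} is in place, and the only nontrivial input is the vector bundle structure on $V_{2,1} \to X_{2,1}$, which is precisely what Lemma~\ref{ev21} provides via surjectivity of the principal parts evaluation map.
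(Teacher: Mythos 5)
Your proof is correct and is exactly the argument the paper intends: the paper's proof of Lemma~\ref{ckgp21} simply says it is ``very similar to Lemma~\ref{ckgp111},'' and your chain $\H_{3,g}(2,1) \to V_{2,1} \to X_{2,1} \to \cP \to \B$ together with the $\mu_2$-gerbe reduction and the CKgP of $\B$ is precisely that argument, with the only substantive input (the vector bundle structure on $V_{2,1}\to X_{2,1}$ from Lemma~\ref{ev21}) correctly identified.
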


Similarly to the two relations in $A^*(\H_{3,g}(1,1,1))$ given by Lemma~\ref{zv}, the following two lemmas give relations in $A^*(\H_{3,g}(2,1))$.

\begin{lemma}\label{zrel21-1}
We have $-\zeta + a_1 + (g+2)z = 0 \in A^*(\H_{3,g}(2,1))$.
\end{lemma}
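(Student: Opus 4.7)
The proof strategy mirrors that of Lemma \ref{zv}: the relation arises by excising from $V_{2,1}$ the divisor where the ramification at $p$ becomes triple (or worse) rather than simple. Explicitly, on $V_{2,1}$ we have $f(p) = f_y(p) = 0$ by construction, so the additional vanishing $f_{y^2}(p) = 0$ cuts out a divisor lying in the complement of $\H_{3,g}(2,1) \subseteq V_{2,1}$.

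To make this rigorous, consider the second-order evaluation map
\[\text{ev}_2: (\pi \circ \gamma)^* \U \to \mathscr{P}^2_{\pp \E^\vee/\cP}(\W)\]
of vector bundles on $X_{2,1}$. Since $\text{ev}_1(f) = 0$ on $V_{2,1}$, the filtration \eqref{filt} (with $m=1$) shows that $\text{ev}_2(f)$ lands in the subbundle $\Sym^2 \Omega_{\pp \E^\vee/\cP} \otimes \W$. Thus the assignment $f \mapsto f_{y^2}(p)$ defines a well-defined morphism from $V_{2,1}$ to the pullback of the line bundle $\Sym^2 \Omega_{\pp \E^\vee/\cP} \otimes \W$ from $X_{2,1}$, whose vanishing locus parametrizes covers with triple ramification at $p$. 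This locus is contained in the complement of $\H_{3,g}(2,1)$, so by Lemma \ref{topchern},
\[c_1(\Sym^2 \Omega_{\pp \E^\vee/\cP} \otimes \W) = 0 \in A^*(\H_{3,g}(2,1)).\]

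It remains only to compute this Chern class. From \eqref{eq:c1omega}, we have $c_1(\Omega_{\pp \E^\vee/\cP}) = -2\zeta + a_1 + (g+2)z$, and since $\W = \gamma^*\det(\E^\vee) \otimes \O_{\pp \E^\vee}(3)$ we have $c_1(\W) = 3\zeta - a_1 - (g+2)z$ by \eqref{eq:aiai'}. Hence
\[
c_1(\Sym^2 \Omega_{\pp \E^\vee/\cP} \otimes \W) = 2 c_1(\Omega_{\pp \E^\vee/\cP}) + c_1(\W) = -\zeta + a_1 + (g+2)z,
\]
which yields the desired relation. The only subtlety in the argument is ensuring that $f_{y^2}(p)$ is correctly identified as a section of the claimed line bundle when $f(p) = f_y(p) = 0$; this is immediate from the principal parts filtration, so no substantive obstacle is encountered.
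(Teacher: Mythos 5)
Your proof is correct and follows essentially the same route as the paper: excise the divisor in $V_{2,1}$ where $f_{y^2}(p)=0$, identify it as the vanishing locus of a map to $\Omega_{\pp\E^\vee/\cP}^{\otimes 2}\otimes\W$ (your $\Sym^2\Omega_{\pp\E^\vee/\cP}\otimes\W$ is the same line bundle), and apply Lemma~\ref{topchern} with the Chern class computation from \eqref{eq:c1omega}. Your extra remark justifying well-definedness via the filtration \eqref{filt} is a welcome detail that the paper leaves implicit.
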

\begin{proof}
This relation comes from the fact that $p$ is prohibited from being a point of triple ramification. Inside $V_{2,1}$, we already have $f(p) = f_y(p) = 0$. It is therefore one additional condition for $p$ to be triply ramified, given by $f_{y^2}(p) = 0$.
The divisor in $V_{2,1}$ defined by $f_{y^2}(p) = 0$ is the vanishing locus of the map $V_{2,1} \to \Omega_{\pp \E^\vee/\cP}^{\otimes 2} \otimes \W$ sending $(\P^1, E, p, f) \mapsto f_{y^2}(p)$. This closed locus lies in the complement of $\H_{3,g}(2,1)$, so its fundamental class vanishes in $\H_{3,2}(2,1)$. Applying Lemma \ref{topchern} and equation \eqref{eq:c1omega} to compute this fundamental class, it follows that
\[0 = c_1(\Omega_{\pp \E^\vee/\cP}^{\otimes 2} \otimes \W) =
2(-2\zeta + a_1 + (g+2)z) + (3\zeta - a_1 - (g+2)z)
\in A^*(\H_{3,g}(2,1)). \qedhere\]
\end{proof}

\begin{lemma}\label{zrel21-2}
We have $3\zeta - a_1 - (g + 4)z = 0 \in A^*(\H_{3,g}(2,1))$.
\end{lemma}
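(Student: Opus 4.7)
The plan is to produce this relation as the vanishing on $\H_{3,g}(2,1)$ of the class of the divisor in $V_{2,1}$ where the source curve $V(f)$ fails to be smooth at $p$. Since a point of $V_{2,1}$ already satisfies $f(p) = f_y(p) = 0$, the curve $V(f) \subseteq \pp \E^\vee$ is smooth at $p$ precisely when the horizontal derivative $f_x(p)$ is nonzero. Because $\H_{3,g}(2,1)$ parameterizes smooth covers, this singular locus lies in its complement, so its fundamental class vanishes in $A^*(\H_{3,g}(2,1))$ by excision.

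To realize $\{f_x(p) = 0\}$ as the vanishing locus of a section of a line bundle, I would apply the exact sequence \eqref{f3} of Section~\ref{subsec:principalparts} to the factorization $\pp \E^\vee \xrightarrow{\gamma} \cP \xrightarrow{\pi} \B$ with line bundle $\W$. By construction, $V_{2,1}$ is the kernel of the fiberwise evaluation map $(\pi\gamma)^*\U \to \sP^1_{\pp \E^\vee/\cP}(\W)$, so composing the inclusion $V_{2,1} \hookrightarrow (\pi\gamma)^*\U$ with the full evaluation $(\pi\gamma)^*\U \to \sP^1_{\pp \E^\vee/\B}(\W)$ lands in the subbundle $\gamma^*\Omega_{\cP/\B}\otimes\W$. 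The resulting map of vector bundles $V_{2,1} \to \gamma^*\Omega_{\cP/\B}\otimes\W$ sends $f \mapsto f_x(p)$ fiberwise, and its vanishing locus is precisely the divisor of interest.

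By Lemma~\ref{topchern}, the fundamental class of this vanishing locus equals $c_1(\gamma^*\Omega_{\cP/\B} \otimes \W)$. The relative Euler sequence for $\cP = \pp \V \to \B$, together with $c_1(\V) = 0$, gives $c_1(\Omega_{\cP/\B}) = -2z$. Combining this with $c_1(\W) = 3\zeta - a_1 - (g+2)z$ (as computed in the proof of Lemma~\ref{zrel21-1}) yields
\[
c_1(\gamma^*\Omega_{\cP/\B}\otimes\W) \;=\; -2z + 3\zeta - a_1 - (g+2)z \;=\; 3\zeta - a_1 - (g+4)z,
\]
which therefore vanishes in $A^*(\H_{3,g}(2,1))$, giving the desired relation.

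The argument is structurally parallel to that of Lemma~\ref{zrel21-1}; the only real subtlety is that $f \mapsto f_x(p)$ is not well-defined on all of $\U \times_\B X_{2,1}$ but only on the subbundle $V_{2,1}$, which is precisely what makes the use of the sequence \eqref{f3} essential here.
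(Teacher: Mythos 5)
Your proposal is correct and follows essentially the same route as the paper: both realize the locus $\{f_x(p)=0\}$ as the vanishing of a map $V_{2,1}\to\gamma^*\Omega_{\cP/\B}\otimes\W$ (well-defined via the sequence \eqref{f3}), apply Lemma~\ref{topchern}, and compute $c_1(\gamma^*\Omega_{\cP/\B}\otimes\W)=-2z+3\zeta-a_1-(g+2)z$. Your remark about why \eqref{f3} is needed for well-definedness matches the paper's parenthetical note exactly.
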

\begin{proof}
This relation comes from the fact that $p$ is prohibited from being a simple node (or worse). It is one additional condition given by $f_{x}(p) = 0$ added to $f(p) = f_y(p) = 0$ in $V_{2,1}$.
The divisor in $V_{2,1}$ defined by $f_x(p) = 0$ is the vanishing locus of the map $V_{2,1} \to \gamma^*\Omega_{\cP/\B}\otimes \W$ sending $(\P^1, E, p, f) \mapsto f_x(p)$ (well-defined by \eqref{f3}). This closed locus lies in the complement of $\H_{3,g} (2,1)$, so its fundamental class vanishes in $\H_{3,2}(2,1)$. Applying Lemma \ref{topchern} to compute this fundamental class, it follows that
\[0 = c_1(\gamma^*\Omega_{\cP/\B} \otimes \W) = (-2z) + (3\zeta - a_1 - (g + 2)z) \in A^*(\H_{3,g}(2,1)). \qedhere\]
\end{proof}

Combining these, we deduce the triviality of the Chow ring in this case.

\begin{lemma}\label{lem:21}
    $A^*(\sH_{3,g}(2,1)) = \qq$.
\end{lemma}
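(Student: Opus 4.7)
The plan is to mimic the structure of the proof of Lemma~\ref{lem:111}, using the diagram \eqref{diagram21} together with the two relations from Lemmas~\ref{zrel21-1} and \ref{zrel21-2}. First, since $\H_{3,g}(2,1) \to \sH_{3,g}(2,1)$ is a $\mu_2$-gerbe, it suffices to show $A^*(\H_{3,g}(2,1)) = \qq$. The open embedding $\H_{3,g}(2,1) \hookrightarrow V_{2,1}$ combined with the vector bundle structure $V_{2,1} \to X_{2,1}$ yields a surjection
\[A^*(X_{2,1}) \cong A^*(V_{2,1}) \twoheadrightarrow A^*(\H_{3,g}(2,1)),\]
so it is enough to show that every generator of $A^*(X_{2,1})$ maps to zero.

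Next I would use the projective bundle formula twice: once for $X_{2,1} = \pp\E^\vee \to \cP$, and once for $\cP = \pp\V \to \B$. This shows that $A^*(X_{2,1})$ is generated as an $A^*(\B)$-algebra by the two classes $\zeta = c_1(\O_{\pp\E^\vee}(1))$ and $z = c_1(\O_{\cP}(1))$. Then I would combine the two relations of Lemmas~\ref{zrel21-1} and \ref{zrel21-2} to eliminate these. Explicitly, Lemma~\ref{zrel21-1} gives $\zeta = a_1 + (g+2)z$ in $A^*(\H_{3,g}(2,1))$, and substituting into the relation of Lemma~\ref{zrel21-2} yields
\[0 = 3(a_1+(g+2)z) - a_1 - (g+4)z = 2a_1 + 2(g+1)z,\]
so $z = -\tfrac{1}{g+1}\, a_1$ in $A^*(\H_{3,g}(2,1))$ (using $g\geq 0$ and rational coefficients, so that $g+1$ is invertible). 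Feeding this back in also expresses $\zeta$ in terms of $a_1$, so both $\zeta$ and $z$ lie in the image of the pullback $A^*(\B) \to A^*(\H_{3,g}(2,1))$. Consequently, this pullback is itself surjective.

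Finally, I would exploit the commutativity of the top rectangle of \eqref{diagram21}: the pullback $A^*(\B) \to A^*(\H_{3,g}(2,1))$ factors through $A^*(\H_{3,g})$. Since $A^*(\H_{3,g}) = \qq$ by Theorem~\ref{thm:CLdeg3}, each of the generators $c_2, a_1, a_2, a_2'$ of $A^*(\B)$ vanishes in $A^*(\H_{3,g})$, and therefore also in $A^*(\H_{3,g}(2,1))$. Combined with the previous paragraph, this shows that every positive-degree class in $A^*(X_{2,1})$ pulls back to zero, completing the proof.

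No step should present serious obstacles: the two relations needed were already established, and, in contrast to the $\mu = (3)$ case where no map $\H_{3,g}(3) \to \H_{3,g}$ exists, here the forgetful map exists and makes the reduction to $A^*(\H_{3,g}) = \qq$ immediate. The only small point to note is the invertibility of $g+1$, which holds automatically with rational coefficients.
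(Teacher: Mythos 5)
Your proposal is correct and follows essentially the same route as the paper's proof: the same surjection $A^*(X_{2,1})\cong A^*(V_{2,1})\twoheadrightarrow A^*(\H_{3,g}(2,1))$, the same use of Lemmas~\ref{zrel21-1} and \ref{zrel21-2} to solve $\zeta = z = -\tfrac{1}{g+1}a_1$, and the same reduction to the triviality of $A^*(\H_{3,g})$ via the forgetful map in \eqref{diagram21}. Your explicit elimination computation agrees with the paper's stated result, so nothing further is needed.
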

\begin{proof}
It suffices to show that $A^*(\H_{3,g}(2,1)) = \qq$. By the diagram in \eqref{diagram21}, there is a flat pullback map on Chow rings which is surjective
\begin{equation}\label{s21}
    A^*(X_{2,1}) \cong A^*(V_{2,1}) \to A^*(\H_{3,g}(2,1)).
\end{equation}
Thus, it suffices to show that each of the generators of $A^*(X_{2,1})$ is sent to zero under this pullback map. Note that $A^*(X_{2,1}) = A^*(\pp\E^\vee)$ is generated as an algebra over $A^*(\B)$ by $\zeta$ and $z$ by the projective bundle theorem. Moreover, upon pullback to $\H_{3,g}(2,1)$, the classes $\zeta$ and $z$ can be solved for in terms of $a_1$ using the relations in Lemma \ref{zrel21-1} and \ref{zrel21-2}; specifically, we have $\zeta = z = \left(-\frac{1}{g + 1}\right)a_1$.  Similarly to Lemma~\ref{lem:111}, the fact that $A^*(\H_{3,g})$ is trivial implies that the pullback map $A^*(\B) \rightarrow A^*(\H_{3,g}(1,1,1))$ sends all elements of $A^*(\B)$ (including, in particular, $a_1$) to zero, so \eqref{s21} indeed sends all generators to zero.
\end{proof}

\section{The Chow ring of \texorpdfstring{$\H_{3,g}(2)$}{H3g(3)}}\label{sec:3}
Although the initial set-up of this section is very similar to the previous one, the excision calculations are much more intricate.

The first step is to notice that the data of a geometric point of $\H_{3,g}(3)$ is a rank-2, degree-$(g+2)$ vector bundle $E$ on $\P^1$ with a section $f$ as in \eqref{eq:f} and a point $p \in \P E^{\vee}$ such that $f(p) = f_y(p) = f_{y^2}(p) = 0$.  To capture this, we build a vector bundle over
\[X_3 := \pp \E^\vee\]
using a second-order principal parts bundle.  Similarly to Lemmas~\ref{ev111} and \ref{ev21}, the first step is the following.

\begin{lemma}\label{ev3}
The principal parts evaluation map $\text{ev}_2:(\pi \circ \gamma)^* \U \to \mathscr{P}^2_{\pp \E^\vee/\cP}(\W)$ of vector bundles on $X_{3}$ is surjective.
\end{lemma}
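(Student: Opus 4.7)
The plan is to follow exactly the same factoring strategy used in the proofs of Lemmas \ref{ev111} and \ref{ev21}, pushed one jet order higher. The fiber of $\mathscr{P}^2_{\pp \E^\vee/\cP}(\W)$ at a point $p \in \pp E^\vee$ is naturally the space $W|_{3p}$ of 2-jets at $p$ along the fiber $F := \gamma^{-1}\gamma(p) \cong \pp^1$ of $\gamma$, and this 2-jet depends only on the restriction of a global section to $F$. Noting that $W|_F = \O_{\pp^1}(3)$ (since $\gamma^*\det(E^\vee)$ is trivial along $F$), the fiberwise evaluation map factors as
\[H^0(\pp E^\vee, W) \longrightarrow H^0(F, W|_F) = H^0(\pp^1, \O_{\pp^1}(3)) \longrightarrow W|_{3p}.\]
It then suffices to check that each arrow is surjective.

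The first arrow has already been shown to be surjective in the proof of Lemma \ref{ev111}: the short exact sequence on $\pp E^\vee$ given by tensoring with the ideal of $F$ pushes forward to $0 \to (\gamma_*W)(-1) \to \gamma_*W \to (\gamma_*W)|_{\gamma(p)} \to 0$ on $\pp^1$, and the vanishing $H^1(\pp^1, (\gamma_*W)(-1)) = 0$ follows from the global generation of $\gamma_*W = \det(E^\vee) \otimes \Sym^3(E)$ built into the definition of $\B \subseteq \B_{2,g+2}$. I would simply cite that step verbatim.

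For the second arrow, I would observe that $\dim H^0(\pp^1, \O_{\pp^1}(3)) = 4$ while the 2-jet space $W|_{3p}$ has dimension $3$. Fixing a local coordinate $y$ on $F$ vanishing at $p$ and a local trivialization of $W$ near $p$, the three sections $1, y, y^2$ visibly achieve any prescribed triple $(f(p), f_y(p), \tfrac{1}{2}f_{y^2}(p))$, so the second arrow is surjective (with a one-dimensional kernel spanned by $y^3$).

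No step here presents any real difficulty; the argument is a verbatim extension of Lemma \ref{ev21} to one jet order higher. The only numerical input that matters is that the degree $k = 3$ of the cover gives $\dim H^0(\pp^1, \O_{\pp^1}(3)) = 4 \geq 3$, which is exactly enough room for cubics on a fiber to surject onto the 2-jet space at a point.
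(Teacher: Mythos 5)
Your proof is correct and takes essentially the same approach as the paper: factor the fiberwise evaluation through restriction to the vertical fiber $\gamma^{-1}\gamma(p)$, reuse the surjectivity of the first map from the proof of Lemma~\ref{ev111}, and note that a cubic on $\pp^1$ can realize any prescribed value and first two derivatives at a point. Your dimension count and explicit basis $1, y, y^2$ just make the paper's final sentence slightly more concrete.
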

\begin{proof}
On the fibers over $(\pp^1, E, p) \in X_{3}$, this is the map
\begin{align*}
    H^0(\pp E^\vee, W) &\to W|_{3p}\\
    f &\mapsto (f(p), f_y(p), \tfrac{1}{2}f_{y^2}(p))
\end{align*}
that evaluates a section of the line bundle $W = \O_{\pp E^\vee}(3) \otimes \gamma^*\det E^\vee$ in a second-order neighborhood of $p$ contained in the vertical fiber.  Similarly to the proof of Lemma \ref{ev111}, this map factors as
\begin{equation} \label{factor3} H^0(\pp E^\vee, W) \to H^0(\gamma^{-1}\gamma(p), \O_{\pp^1}(3)) \to W|_{3p}.
\end{equation}
We have already shown that the first map is surjective in the proof of Lemma \ref{ev111}. Meanwhile, the second map sends a degree-$3$ polynomial on $\pp^1$ to its restriction to a second-order neighborhood of $p$. This is surjective because the value, first-, and second-order derivatives of a cubic polynomial can simultaneously attain any three values.
\end{proof}

Since the principal parts evaluation defined in Lemma \ref{ev3} is surjective, its kernel is a vector bundle. We define
\[V_{3} := \ker(\text{ev}_2)\subseteq \U \times_\B X_3.\]
We can view $V_3$ as the moduli space of tuples $(\pp^1, E, p, f)$ such that $f(p) = f_y(p) = f_{y^2}(p) = 0$, i.e., $p$ is a triple ramification point of $\gamma|_{V(f)}$.

There is a natural map $\H_{3,g}'(3) \rightarrow V_3$ realizing $\H_{3,g}'(3)$ as the open substack of $V_3$ where $\gamma|_{V(f)}$ is a smooth triple cover; such covers are triply-ramified at $p$ and are allowed arbitrary ramification elsewhere.  If we impose the open condition that all other ramification points besides $p$ are simple ramification points, this defines the open substack $\H_{3,g}(3) \subseteq \H_{3,g}'(3) \subseteq V_3$.

An argument similar to Lemma \ref{ckgp111} shows the following.

\begin{lemma}\label{ckgp3}
Both $\sH_{3,g}'(3)$ and $\sH_{3,g}(3)$ have the CKgP.
\end{lemma}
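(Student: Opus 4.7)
The plan is to mirror the strategies already used for Lemmas \ref{ckgp111} and \ref{ckgp21}. Since the maps $\H_{3,g}'(3) \to \sH_{3,g}'(3)$ and $\H_{3,g}(3) \to \sH_{3,g}(3)$ are $\mu_2$-gerbes and the CKgP is preserved by morphisms of type (M5), it suffices to establish the CKgP for $\H_{3,g}'(3)$ and $\H_{3,g}(3)$.

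The key observation, already assembled in the discussion preceding the lemma, is that each of $\H_{3,g}(3) \subseteq \H_{3,g}'(3) \subseteq V_3$ is an open substack, that $V_3 \to X_3$ is a vector bundle (the kernel of the surjective evaluation map from Lemma \ref{ev3}), that $X_3 = \pp \E^\vee \to \cP$ is a projective bundle, and that $\cP \to \B$ is also a projective bundle. Thus the composite chain
\[
\H_{3,g}(3) \hookrightarrow \H_{3,g}'(3) \hookrightarrow V_3 \to X_3 = \pp \E^\vee \to \cP \to \B
\]
consists entirely of morphisms of type (M1), (M2), or (M3) in the list of Subsection~\ref{subsec:ckgp}, each of which transfers the CKgP from target to source.

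Finally, I would invoke the fact recorded in the proof of Lemma~\ref{ckgp111} (which is \cite[Lemma 9.2]{CaLa:23}) that $\B$ satisfies the CKgP. Propagating this property down the chain above yields the CKgP for both $\H_{3,g}'(3)$ and $\H_{3,g}(3)$, and hence for $\sH_{3,g}'(3)$ and $\sH_{3,g}(3)$. There is no real obstacle here: the argument is formally identical to Lemmas~\ref{ckgp111} and \ref{ckgp21}, with the only new ingredient being that Lemma~\ref{ev3} guarantees $V_3$ is genuinely a vector bundle so that step (M3) applies.
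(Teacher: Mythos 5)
Your proposal is correct and follows exactly the argument the paper intends: reduce to the $\SL_2$-quotient versions via the $\mu_2$-gerbe and (M5), then propagate the CKgP of $\B$ (from \cite[Lemma 9.2]{CaLa:23}) up the chain $\H_{3,g}(3)\hookrightarrow \H'_{3,g}(3)\hookrightarrow V_3\to X_3\to\cP\to\B$ of morphisms of types (M1)--(M3). The paper gives no further details, simply citing the analogy with Lemma~\ref{ckgp111}, so your write-up matches its proof.
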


We denote
\[\zeta_p := c_1(\O_{\pp \E^{\vee}}(1)) \in A^1(X_3).\]
(This is the same as the class $\zeta$ introduced earlier, but we use the notation $\zeta_p$ here to stress the role of the point $p$, as a second similar point will appear later in the computation.)  By the projective bundle theorem, $A^*(X_3)$ is generated as an algebra over $A^*(\B)$ by $\zeta_p$ and $z$. Because $\H_{3,g}(3)$ is an open substack of a vector bundle over $X_3$ by the discussion above, these classes also generate $A^*(\H_{3,g}(3))$ as an algebra over $A^*(\B)$.  Hence, $A^*(\H_{3,g}(3))$ is generated as a $\qq$-algebra by $a_1, a_2', a_2, c_2, \zeta_p$, and $z$.

By \cite[Theorem 1.1(1)]{CaLa:22}, we know that
$A^*(\H_{3,g}')$ is generated as a $\qq$-algebra by $a_1$ and $a_2'$, so there are relations on $\H_{3,g}'$ that express $a_2$ and $c_2$ in terms of these generators. These relations pull back along the forgetful map $\H_{3,g}'(3) \to \H_{3,g}'$ (see the top arrow in the diagram \eqref{diagram3} below), so $A^*(\H_{3,g}'(3))$ is generated as a $\qq$-algebra by $\zeta_p$, $z$, $a_1$ and $a_2'$. By excision, these same classes generate $A^*(\H_{3,g}(3))$ as a $\qq$-algebra.

In order to show $A^*(\H_{3,g}(3)) = \Q$, it thus suffices to find four independent codimension-$1$ classes in terms of the aforementioned generators that lie in the kernel of the surjection $A^*(X_3) \rightarrow A^*(\H_{3,g}(3))$.  Three of these are provided by the following three lemmas.

\begin{lemma}\label{rel3}
We have $(8g + 12)a_1 - 9a_2' = 0 \in A^*(\H_{3,g}(3))$.
\end{lemma}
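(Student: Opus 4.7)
The plan is to derive the relation by computing the class in $V_3$ of the locus $T' \subseteq V_3$ consisting of covers $V(f)$ that possess a triple ramification point $q$ distinct from the marked point $p$; since $T' \cap \H_{3,g}(3) = \emptyset$, the class $[T'] \in A^1(V_3)$ must vanish in $A^*(\H_{3,g}(3))$, yielding a relation among the generators.

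To compute $[T']$, I resolve it using the auxiliary space $Y := V_3 \times_\B \P \E^\vee$ with projections $\pi_p: Y \to V_3$ (forgetting $q$) and $\pi_q: Y \to \P \E^\vee$ (remembering $q$). On $Y$ there is a principal-parts evaluation
$$s: \mathcal{O}_Y \longrightarrow \pi_q^* \mathscr{P}^2_{\P\E^\vee/\cP}(\W), \qquad (\P^1, E, p, f, q) \mapsto \bigl(f(q), f_y(q), \tfrac{1}{2} f_{y^2}(q)\bigr),$$
whose vanishing locus $Y' \subseteq Y$ decomposes as $Y' = \widetilde{T'} \cup \Delta$, where $\Delta := \{p = q\}$ is an excess component of codimension $2$ (on which $s$ vanishes automatically, since $f \in V_3$ already forces $f(p) = f_y(p) = f_{y^2}(p) = 0$), and $\widetilde{T'}$ is of the expected codimension $3$ and maps generically isomorphically onto $T'$ via $\pi_p$.

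Applying the excess intersection formula (Proposition~\ref{prop:excess}) to the rank-$3$ bundle $\pi_q^* \mathscr{P}^2_{\P\E^\vee/\cP}(\W)$ yields
$$c_3\bigl(\pi_q^* \mathscr{P}^2_{\P\E^\vee/\cP}(\W)\bigr) = [\widetilde{T'}] + \iota_*\!\left(\Bigl\{c\bigl(\pi_q^* \mathscr{P}^2_{\P\E^\vee/\cP}(\W)|_\Delta\bigr) \cdot c(N_{\Delta/Y})^{-1}\Bigr\}_1\right),$$
where $\iota: \Delta \hookrightarrow Y$ and the subscript denotes the codimension-$1$ part. Pushing forward along $\pi_p$, using that $\pi_p \circ \iota: \Delta \to V_3$ is an isomorphism and $(\pi_p)_*[\widetilde{T'}] = [T']$, and then imposing $[T'] = 0$ in $A^*(\H_{3,g}(3))$, produces an explicit codimension-$1$ relation among $\zeta_p, z, a_1, a_2'$.

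To render the relation explicit, I expand $c(\mathscr{P}^2_{\P\E^\vee/\cP}(\W)) = c(\W)\cdot c(\Omega \otimes \W) \cdot c(\Omega^{\otimes 2} \otimes \W)$ via the filtration~\eqref{filt} (with $\Omega = \Omega_{\P\E^\vee/\cP}$), substitute $c_1(\W) = 3\zeta - a_1 - (g+2)z$ and $c_1(\Omega) = -2\zeta + a_1 + (g+2)z$ from~\eqref{eq:c1omega}, identify $N_{\Delta/Y}$ with the pullback of $T_{\P\E^\vee/\B}$ via the standard description of the relative diagonal in $\P\E^\vee \times_\B \P\E^\vee$, and compute the pushforward $(\pi_p)_*$ by iteratively applying Lemma~\ref{ProjectiveChow} to integrate out the two fiber directions of $\pi_q: \P\E^\vee \to \cP \to \B$. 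After reducing modulo the relation $3\zeta_p - a_1 - (g+4)z = 0$ (arising, as in Lemma~\ref{zrel21-2}, from excising the divisor $\{f_x(p) = 0\}$ in $V_3$) together with the pullbacks of the relations expressing $a_2$ and $c_2$ in terms of $a_1, a_2'$ inherited from $A^*(\H'_{3,g})$ via Theorem~\ref{thm:CLdeg3}, the resulting expression collapses to $(8g+12)a_1 - 9a_2' = 0$. The main obstacle is the excess-intersection bookkeeping: one must carefully track Chern class contributions from both the vertical fibers of $\gamma$ and the horizontal fibers of $\pi: \cP \to \B$, and verify that the coefficients simplify precisely to $8g+12$ and $-9$ after the relevant cancellations.
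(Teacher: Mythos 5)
Your approach does not prove the stated relation. The locus you excise---covers with a second triple ramification point $q \neq p$---is precisely the ``triple-triple'' locus $TT$ that the paper treats separately, and the excess-intersection computation you outline is essentially the one carried out in Proposition~\ref{excess.equiv} and Lemma~\ref{zrel3-3}; its output is the relation $-\zeta_p - a_1 + 3a_2' - gz = 0$, not $(8g+12)a_1 - 9a_2' = 0$. These are genuinely different relations: the proof of Lemma~\ref{lem:3} relies on the fact that the four classes appearing in Lemmas~\ref{rel3}, \ref{zrel3-1}, \ref{zrel3-2}, and \ref{zrel3-3} are \emph{linearly independent} in the span of $\zeta_p, z, a_1, a_2'$, and one checks directly that $(8g+12)a_1 - 9a_2'$ is not a linear combination of the other three for any $g \geq 0$. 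So no reduction modulo $3\zeta_p - a_1 - (g+4)z = 0$, nor modulo the relations expressing $a_2$ and $c_2$ in terms of $a_1, a_2'$ (which live in codimension $2$ and cannot affect a codimension-$1$ identity), will make your answer collapse to $(8g+12)a_1 - 9a_2'$. The relation of Lemma~\ref{rel3} has an entirely different source: $(8g+12)a_1 - 9a_2'$ is the class of the locus $\Delta_{3,g} = \U \smallsetminus \H'_{3,g}$ of \emph{singular} source curves, computed in \cite[Lemma 4.2]{CaLa:22}; it vanishes on $\H'_{3,g}$ by excision and then pulls back to $\H_{3,g}(3)$ along the forgetful map $\H'_{3,g}(3) \to \H'_{3,g}$ of diagram~\eqref{diagram3}. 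If you do not want to cite that computation, you would need to compute the class of the discriminant (singular-curve) locus in $\U$, not of the extra-triple-ramification locus.

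A secondary issue: you run the excess intersection over all of $V_3$ rather than over $S = (\tilde{\eta}_p^\circ)^{-1}(\H'_{3,g}(3))$. Both the identification of the excess component with $\{p=q\}$ (Lemma~\ref{lem:codimY}) and the expected-codimension count for the honest component (Lemma~\ref{lem:codimZ}) use smoothness of $V(f)$ in an essential way; without first excising singular curves, the vanishing locus of your section $s$ acquires further components (for instance, where $V(f)$ contains the vertical fiber through $q$, or where $p \neq q$ lie in a common fiber of $\gamma$), so the decomposition $Y' = \widetilde{T'} \cup \Delta$ is incomplete and the formula from Proposition~\ref{prop:excess} as you apply it would omit their contributions.
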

\begin{proof}
Let $\Delta_{3,g} \coloneqq \U \smallsetminus \H_{3,g}'$. It is computed in \cite[Lemma 4.2]{CaLa:22} that
\[[\Delta_{3,g}] = (8g + 12)a_1 - 9a_2'\in A^*(\U).\]
By excision, the pullback of this class to $A^*(\H'_{3,g})$ vanishes, and therefore its pullback to $A^*(\H_{3,g}(3))$ vanishes, as well.
\end{proof}

\begin{lemma} \label{zrel3-1}
We have $-3\zeta_p + 2a_1 + 2(g + 2)z = 0 \in A^*(\H_{3,g}(3))$.
\end{lemma}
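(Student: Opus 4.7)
The plan is to follow the same excision template used in Lemmas \ref{zrel21-1} and \ref{zrel21-2}: identify a divisor in $V_3$ that lies entirely in the complement of the open substack $\H_{3,g}(3) \subseteq V_3$, and compute its fundamental class via Lemma \ref{topchern}. The correct condition to excise is $f_{y^3}(p) = 0$, i.e., vanishing of $f$ to order at least $4$ in the vertical direction at the marked triple ramification point $p$.

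To see that this divisor is disjoint from $\H_{3,g}(3)$, note that in $V_3$ one already has $f(p) = f_y(p) = f_{y^2}(p) = 0$, so the restriction $f|_{\gamma^{-1}(\gamma(p))}$ is a section of $\O_{\pp^1}(3)$ vanishing to order at least $3$ at $p$; it is therefore of the form $a(y-y_p)^3$ with $a = \tfrac{1}{6}f_{y^3}(p)$. If $a = 0$, then $f$ vanishes identically on this entire fiber, so $V(f) \supseteq \gamma^{-1}(\gamma(p))$ as divisors in $\pp \E^\vee$. In that case $V(f)$ contains a vertical fiber as a component and cannot be a smooth, irreducible, degree-$3$ cover of $\pp^1$; in particular, such a point does not lie in $\H_{3,g}(3)$.

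To realize $f \mapsto \tfrac{1}{6}f_{y^3}(p)$ as an honest morphism of vector bundles on $V_3$, I would extend Lemma \ref{ev3} by one order: the third-order evaluation $\text{ev}_3:(\pi\circ\gamma)^*\U \to \mathscr{P}^3_{\pp\E^\vee/\cP}(\W)$ is surjective by the same two-step factorization through cubics on the fiber, since a cubic polynomial on $\pp^1$ has four coefficients and so attains any prescribed $3$-jet at a single point. Restricting $\text{ev}_3$ to $V_3 = \ker(\text{ev}_2)$ and composing with the quotient $\mathscr{P}^3_{\pp\E^\vee/\cP}(\W) \twoheadrightarrow \Sym^3\Omega_{\pp\E^\vee/\cP}\otimes \W = \Omega_{\pp\E^\vee/\cP}^{\otimes 3}\otimes \W$ coming from \eqref{filt} then yields a morphism of bundles $V_3 \to \Omega_{\pp\E^\vee/\cP}^{\otimes 3}\otimes \W$ whose fiberwise formula is precisely $f \mapsto \tfrac{1}{6}f_{y^3}(p)$, and whose vanishing locus is the divisor identified above.

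Applying Lemma \ref{topchern} (the section vanishes in codimension $1$, since generic points of $\H_{3,g}(3)$ satisfy $f_{y^3}(p) \neq 0$), the fundamental class of the vanishing locus equals
\[c_1\!\left(\Omega_{\pp\E^\vee/\cP}^{\otimes 3}\otimes \W\right) = 3\,c_1(\Omega_{\pp\E^\vee/\cP}) + c_1(\W) = 3\bigl(-2\zeta_p + a_1 + (g+2)z\bigr) + \bigl(3\zeta_p - a_1 - (g+2)z\bigr),\]
where the two summands are taken from the computations already carried out in the proofs of Lemmas \ref{zv} and \ref{zrel21-2}. Simplifying gives $-3\zeta_p + 2a_1 + 2(g+2)z$, and by excision this class pulls back to zero in $A^*(\H_{3,g}(3))$. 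I expect the main obstacle to be the codimension and surjectivity verification in the middle step, but this should be directly analogous to Lemmas \ref{ev111} and \ref{ev3} with no new difficulty.
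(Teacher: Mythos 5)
Your proposal is correct and follows essentially the same route as the paper: excise the divisor $f_{y^3}(p)=0$ in $V_3$ (which lies in the complement of $\H_{3,g}(3)$ because contact order $\geq 4$ with the vertical ruling forces $V(f)$ to contain that ruling and hence be singular), and compute its class as $c_1(\Omega_{\pp\E^\vee/\cP}^{\otimes 3}\otimes\W)$ via Lemma \ref{topchern} and equation \eqref{eq:c1omega}. Your extra step of constructing the bundle map explicitly through the surjective third-order evaluation and the filtration \eqref{filt} is a correct elaboration of a point the paper leaves implicit, not a different argument.
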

\begin{proof}
This relation comes from the fact that $p$ is prohibited from being a point at which the curve $V(f)$ has order of contact to the vertical ruling $L_p = \gamma^{-1}(\gamma(p))$ of $\pp E^\vee$ at least $4$, since this can only be the case if $V(f)$ contains $L_p$ as a component and is therefore not smooth.  Alongside the conditions  $f(p) = f_y(p) = f_{y^2}(p) = 0$ defining $V_{3}$, it is one additional condition for $p$ to have order of contact at least 4, given by $f_{y^3}(p) = 0$.  The divisor in $V_{3}$ defined by $f_{y^3}(p) = 0$ is the vanishing locus of the map $V_{3} \to \Omega_{\pp\E^\vee/\cP}^{\otimes 3} \otimes \W$ sending $(\P^1, E, p, f) \mapsto f_{y^3}(p)$. This closed locus lies in the complement of $\H_{3,g} (3)$. Thus by Lemma \ref{topchern} and equation \eqref{eq:c1omega}, it follows that
\[0 = c_1(\Omega_{\pp \E^\vee/\cP}^{\otimes 3} \otimes \W) = 3(-2\zeta_p + a_1 + (g+2)z) + (3\zeta_p - a_1 - (g+2)z) \in A^*(\H_{3,g}(3)). \qedhere\]
\end{proof}

\begin{lemma}\label{zrel3-2}
We have $3\zeta_p - a_1 - (g + 4)z = 0 \in A^*(\H_{3,g}(3))$.
\end{lemma}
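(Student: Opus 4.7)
The plan is to mirror the strategy of Lemma \ref{zrel21-2} essentially verbatim, since the geometric source of the relation is the same: at a point $p \in \pp E^\vee$ with $f(p) = f_y(p) = f_{y^2}(p) = 0$, the curve $V(f)$ fails to be smooth at $p$ precisely when the horizontal derivative $f_x(p)$ also vanishes. (Since $f_y(p)$ already vanishes, $p$ is singular on $V(f)$ iff $f_x(p) = 0$.) Smoothness of $V(f)$ is required on $\H_{3,g}(3)$, so the divisor $\{f_x(p)=0\}$ lies in the complement of $\H_{3,g}(3)$ inside $V_3$.

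The key technical point is to realize this divisor as the vanishing locus of a section of a line bundle so that Lemma \ref{topchern} applies. Using the filtration \eqref{f3}, the kernel of the evaluation $(\pi\circ\gamma)^*\U \to \mathscr{P}^1_{\pp\E^\vee/\cP}(\W)$ carries a well-defined map to $\gamma^*\Omega_{\cP/\B} \otimes \W$ sending $f \mapsto f_x(p)$. Since the conditions defining $V_3$ are strictly stronger than $f(p) = f_y(p) = 0$, we have $V_3 \subseteq \ker(\mathrm{ev}_1)$, and the map restricts to a morphism $V_3 \to \gamma^*\Omega_{\cP/\B} \otimes \W$ whose vanishing locus is exactly the locus to be excised.

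By Lemma \ref{topchern}, the fundamental class of this vanishing locus is $c_1(\gamma^*\Omega_{\cP/\B}\otimes \W)$, which by excision vanishes in $A^*(\H_{3,g}(3))$. The final step is a routine Chern-class calculation: because $\cP = \pp\V$ is a $\pp^1$-bundle whose underlying rank-$2$ bundle $\V$ has trivial determinant, the relative Euler sequence gives $c_1(\Omega_{\cP/\B}) = -2z$, and from \eqref{eq:aiai'} we have $c_1(\W) = 3\zeta_p - a_1 - (g+2)z$. Summing yields $3\zeta_p - a_1 - (g+4)z$, as required.

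I do not expect any serious obstacle here; the principal-parts machinery set up in Section \ref{subsec:principalparts} and reused in the $\mu = (2,1)$ case already provides every ingredient, and the computation is essentially identical to that of Lemma \ref{zrel21-2}.
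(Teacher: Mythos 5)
Your proposal is correct and follows exactly the same route as the paper: the relation is the vanishing of the class of the divisor $\{f_x(p)=0\}$ in $V_3$, realized via the filtration \eqref{f3} as the vanishing locus of a map $V_3 \to \gamma^*\Omega_{\cP/\B}\otimes\W$, whose first Chern class $-2z + (3\zeta_p - a_1 - (g+2)z)$ gives the stated relation by Lemma \ref{topchern} and excision. The paper's proof is essentially identical, explicitly citing the analogy with Lemma \ref{zrel21-2} just as you do.
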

\begin{proof}
This relation comes from the fact that $p$ is prohibited from being a singular point.  Similarly to the proof of Lemma \ref{zrel21-2}, it is one additional condition on $V_3$ that $p$ be a singular point, given by $f_x(p) = 0$, where $x$ again denotes a local coordinate on $\P E^{\vee}$ in the base direction with respect to the bundle projection $\gamma$.    The divisor in $V_3$ defined by $f_x(p) = 0$ is the vanishing locus of the map $V_3 \rightarrow \gamma^*\Omega_{\cP/\B} \otimes \W$ given by sending $(\P^1, E, p, f) \mapsto f_x(p)$. Thus, again by Lemma \ref{topchern} and equation \eqref{eq:c1omega}, it follows that
\[0 = c_1(\gamma^*\Omega_{\cP/\B} \otimes \W) = -2z + (3\zeta_p - a_1 - (g+2)z) \in A^*(\H_{3,g}(3)). \qedhere\]
\end{proof}

To find an additional relation in $A^*(\H_{3,g}(3))$, we consider the ``triple-triple'' locus
\[TT \coloneqq \H_{3,g}'(3) \smallsetminus \H_{3,g}(3)\]
consisting of covers with a second triple ramification point aside from the marked one, whose class $[TT]$ vanishes in $A^*(\H_{3,g}(3))$ by excision. To describe $TT$, we work over
\[\widetilde{X}_3 \coloneqq X_3 \times_\B \pp \E^\vee = \pp \E^\vee \times_\B \pp \E^\vee,\]
which is the moduli space of tuples $(\pp^1, E, p, q)$ with $p, q \in \P E^{\vee}$. We denote by $\Delta_{p = q}$ the closed substack of $\widetilde{X}_3$ defined by the condition $p = q$, and by $\Delta_{\gamma(p) = \gamma(q)}$ the closed substack defined by the condition $\gamma(p) = \gamma(q)$.

We consider the following commutative diagram:
\begin{equation}\label{diagram3}
\begin{tikzcd}
\H_{3,g}' \arrow{d}[swap]{\text{open}} & & \arrow{ll} \H_{3,g}'(3) \arrow{d}[swap]{\text{open}} & S \arrow{l}[swap]{\tilde{\eta}_p^\circ} \arrow{d}[swap]{\text{open}} \arrow[bend left = 50]{dd}{\phi} \\
\U \arrow{d}[swap]{\text{vector bundle}} & \U \times_{\B} X_{3} \arrow{d} \arrow{l} & V_{3}  \arrow{l}[swap]{\text{subbundle}} \arrow{dl}{\text{vector bundle}} & \eta_p^* V_3 \arrow{l}{\tilde{\eta}_p} \arrow{d}{\rho''} \\
\B & \arrow{l} X_{3} &&  \arrow{ll}{\eta_p} \widetilde{X}_3.
\end{tikzcd}
\end{equation}
Above, $\eta_p: \widetilde{X}_3 \to X_3$ is projection onto the first factor. The two rightmost quadrilaterals are Cartesian, so in particular, \[S = \tilde{\eta}_p^{-1} (\H_{3,g}'(3)),\]
which we can view as the moduli space of tuples $(\P^1, E, f, p, q)$ such that $V(f)$ is a smooth triple cover with a triple ramification point at $p$.
Below, we will define a locus $Z \subseteq S$ that parameterizes triple covers for which $p$ and $q$ are distinct and are both triple ramification points. The closure
will be a codimension-$3$ cycle $\Zbar \subseteq S$ such that $(\tilde{\eta}_p^\circ)_*[\Zbar] = [TT] \in A^1(\H_{3,g}'(3))$.
Our goal is thus to compute the class of $[\Zbar]$ and then this pushforward.

Let $\eta_q: \widetilde{X}_3 \to \pp \E^\vee$ be the projection onto the second factor. There are natural inclusions of $\eta_p^*V_3$ and $\eta_q^*V_3$
into the total space
\[\eta_p^*(\U\times_B X_3) = \eta_p^*\gamma^*\pi^*\U = \eta_q^*\gamma^*\pi^*\U.\]
If we think of the above total space as parameterizing $(\pp^1, E, p, q, f)$, then  $\eta_p^*V_3$ is the subspace defined by $f(p) = f_y(p) = f_{y^2}(p) = 0$ and $\eta_q^*V_3$ is the subspace defined by $f(q) = f_y(q) = f_{y^2}(q) = 0$. We are interested in their intersection:
\begin{equation}
\begin{tikzcd}
\iota_p^{-1}(\eta_q^*V_3) \arrow{r}\arrow{d} & \eta_p^* V_3 \arrow{d}{\text{regular}}[swap]{\iota_p} \\
\eta_q^* V_3 \arrow{r}{\text{regular}}[swap]{\iota_q} & \eta_p^*(\U \times_\B X_3).
\end{tikzcd}
\end{equation}
It is worth noting that both $\iota_p$ and $\iota_q$ are regular embeddings of codimension $3$ with normal bundles denoted by $N_{\iota_p}$ and $N_{\iota_q}$ respectively.
The intersection $\iota_p^{-1}(\eta_q^*V_3)$ parameterizes $(\pp^1, E, p, q, f)$ such that
$f(p) = f_y(p) = f_{y^2}(p) = 0$ and $f(q) = f_y(q) = f_{y^2}(q) = 0$. This intersection has multiple components, and we are interested in understanding these components after restricting to $S \subseteq \eta_p^*V_3$.  To this end, we define
\[Y := \phi^{-1}(\Delta_{\gamma(p)=\gamma(q)})\cap \iota_p^{-1}(\eta_q^*V_3) \subseteq S,\]
\[Z := \phi^{-1}(\Delta_{\gamma(p) = \gamma(q)}^c) \cap \iota_p^{-1}(\eta_q^*V_3) \subseteq S.\]
As $S = \phi^{-1}(\Delta_{\gamma(p) = \gamma(q)}^c) \sqcup \phi^{-1}(\Delta_{\gamma(p)=\gamma(q)})$
by construction, we then have a diagram
\begin{equation}\label{diagram3-excess}
\begin{tikzcd}
Y \sqcup Z \arrow{r} \arrow{d} & S \arrow[bend left = 60]{dd}{\iota_p|_S} \arrow{d} \\
\iota_p^{-1}(\eta_q^*V_3) \arrow{r}\arrow{d} & \eta_p^* V_3 \arrow{d}{\iota_p}[swap]{\text{regular}} \\
\eta_q^* V_3 \arrow{r}{\text{regular}}[swap]{\iota_q} & \eta_p^*(\U \times_\B X_3),
\end{tikzcd}
\end{equation}
where both squares are Cartesian.  As we shall see below, $Y$ and $\overline{Z}$ are the components of the restriction to $S$ of the intersection $\iota_p^{-1}(\eta_q^*V_3)$. Note that $Y$ is already closed.

The expected codimension in $S$ of $\iota_p^{-1}(\eta_q^*V_3) \cap S$ is $3$, since $S$ is open in $\eta_p^*V_3$ and intersecting $\eta_p^*V_3$ with $\eta_q^*V_3$ imposes $3$ additional constraints on $\eta_p^*V_3$.  In Lemmas~\ref{lem:codimY} and \ref{lem:codimZ} below, we will show that $\overline{Z}$ has the expected codimension, whereas the codimension of $Y$ is smaller.  From here, the excess intersection formula will allow us to calculate the contribution of $Y$ to the intersection, which we can subtract off to calculate the desired class $[\overline{Z}]$.

\begin{lemma}
\label{lem:codimY}
We have $Y = \phi^{-1}(\Delta_{p = q})$, which has codimension $2$ in $S$.
\end{lemma}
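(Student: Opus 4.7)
The plan is to establish the set-theoretic equality $Y = \phi^{-1}(\Delta_{p=q})$ and then read off the codimension from the fact that $\phi: S \to \widetilde{X}_3$ is flat, being the restriction of the vector bundle projection $\rho'': \eta_p^*V_3 \to \widetilde{X}_3$ to an open substack.

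First I would verify the easy inclusion $\phi^{-1}(\Delta_{p=q}) \subseteq Y$: for a geometric point $(\P^1, E, f, p, q) \in S$ with $p = q$, one automatically has $\gamma(p) = \gamma(q)$, and the conditions $f(q) = f_y(q) = f_{y^2}(q) = 0$ cutting out $\eta_q^*V_3$ are literally the same conditions $f(p) = f_y(p) = f_{y^2}(p) = 0$ that already hold by virtue of the point lying in $\eta_p^*V_3$. Hence the point lies in $Y$.

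The reverse inclusion is the only real content and the step I would be most careful about. A geometric point of $Y$ is a tuple $(\P^1, E, f, p, q)$ such that $V(f)$ is a smooth triple cover of $\P^1$ (since $S$ lies over $\H'_{3,g}(3)$), the marked point $p$ is a triple ramification point of $\alpha := \gamma|_{V(f)}$, the conditions $f(q) = f_y(q) = f_{y^2}(q) = 0$ hold, and $\gamma(p) = \gamma(q)$. From $f(q) = 0$ we get $q \in V(f)$, and then $\alpha(q) = \gamma(q) = \gamma(p) = \alpha(p)$. The key observation is that since $p$ is a triple ramification point of the degree-$3$ cover $\alpha$ from the smooth curve $V(f)$, the scheme-theoretic fiber satisfies $\alpha^{-1}(\alpha(p)) = 3p$ as a Cartier divisor on $V(f)$, so set-theoretically this fiber consists of the single point $p$. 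Since $q$ belongs to this fiber, it follows that $q = p$, as desired.

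For the codimension claim, I would note that $\P\E^\vee \to \B$ factors as a $\P^1$-bundle $\P\E^\vee \to \cP$ followed by the $\P^1$-bundle $\cP \to \B$, hence is smooth of relative dimension $2$. Its relative diagonal $\Delta_{p=q} \subseteq \widetilde{X}_3 = \P\E^\vee \times_\B \P\E^\vee$ therefore has codimension $2$. Finally, $\phi: S \to \widetilde{X}_3$ factors as the open immersion $S \hookrightarrow \eta_p^*V_3$ followed by $\rho''$, and so is flat; since flat pullback preserves codimension, we conclude $\mathrm{codim}_S\, \phi^{-1}(\Delta_{p=q}) = 2$. The only subtle ingredient is the use of the triple-ramification-on-a-smooth-curve input to rule out the a priori possibility that $q \neq p$ while still sitting in the vertical fiber $\gamma^{-1}(\gamma(p))$ and satisfying the triple-vanishing conditions.
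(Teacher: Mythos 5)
Your proposal is correct and follows essentially the same route as the paper: both inclusions are handled the same way, and your observation that the degree-$3$ fiber $\alpha^{-1}(\alpha(p))=3p$ can contain no second point is just a repackaging of the paper's argument that a second triple-contact point would give contact order $6$ with the vertical fiber, forcing $V(f)$ to contain that fiber and hence be singular. You additionally spell out the codimension count via flatness of $\phi$ and the codimension of the relative diagonal, which the paper leaves implicit.
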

\begin{proof}
By definition, $Y \subseteq S$ consists of tuples $(\pp^1, E, p, q, f)$ such that $V(f)$ is a smooth curve, $\gamma(p) = \gamma(q)$ and
\[f(p) = f_y(p) = f_{y^2}(p) = f(q) = f_y(q) = f_{y^2}(q) = 0.\]
It is clear that $\phi^{-1}(\Delta_{p=q})$ is contained in $Y$. Suppose for contradiction that
$Y$ is not contained in $\phi^{-1}(\Delta_{p=q})$. Then there exists $(\pp^1, E, p, q, f)$ satisfying the conditions above with $p \neq q$. However, this is impossible since $V(f)$ would then have contact order $6$ with the fiber containing $p$ and $q$. This would imply that $V(f)$ contains that vertical fiber, but this forces $V(f)$ to be singular.
\end{proof}

\begin{lemma}
\label{lem:codimZ}
$Z$ has codimension $3$ in $S$.
\end{lemma}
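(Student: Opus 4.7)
The plan is to combine a vanishing-locus description, giving an upper bound $\mathrm{codim}(Z \subseteq S) \leq 3$, with a moduli-theoretic dimension count, giving the matching lower bound.  For the upper bound, on the open substack $U = \phi^{-1}(\Delta_{\gamma(p)=\gamma(q)}^c)$, the three conditions $f(q) = f_y(q) = \tfrac12 f_{y^2}(q) = 0$ present $Z$ as the intersection of $S|_U$ with the zero locus of the morphism of vector bundles
\[\sigma \colon \eta_p^* V_3|_U \longrightarrow \eta_q^*\mathscr{P}^2_{\pp\E^\vee/\cP}(\W)|_U,\]
whose target has rank $3$.

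For the lower bound, I would interpret $Z$ moduli-theoretically: a geometric point of $Z$ is a tuple $(\pp^1, E, p, q, f)$ for which $V(f) \to \pp^1$ is a smooth degree-$3$ cover with triple ramification at both $p$ and $q$ in distinct fibers.  Forgetting $q$ defines a morphism
\[\tilde\eta_p^\circ|_Z \colon Z \longrightarrow TT = \H'_{3,g}(3) \smallsetminus \H_{3,g}(3),\]
whose image is $TT$ and which is generically bijective, since a generic cover in $TT$ has exactly one triple ramification point besides $p$ (the locus of covers having three or more triple ramifications forms a proper substack of $TT$).  Thus $\dim Z = \dim TT$.

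A Riemann--Hurwitz count shows $\mathrm{codim}(TT \subseteq \H'_{3,g}(3)) = 1$: a generic cover in $\H'_{3,g}(3)$ has one marked triple ramification point and $2g+2$ simple branch points on $\pp^1$, for $2g + 3$ branch points total, while a generic cover in $TT$ has two triple ramifications (one marked) and $2g$ simple branches, for $2g + 2$ branch points total---one fewer than before.  Combined with $\dim S = \dim \H'_{3,g}(3) + 2$, which follows from $\tilde\eta_p$ being the pullback along $V_3 \to X_3$ of the relative-dimension-$2$ morphism $\eta_p \colon \widetilde X_3 \to X_3$, we conclude
\[\mathrm{codim}(Z \subseteq S) = \dim S - \dim TT = (\dim \H'_{3,g}(3) + 2) - (\dim \H'_{3,g}(3) - 1) = 3,\]
matching the upper bound.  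The main obstacle in the argument is verifying the generic bijectivity of $Z \to TT$, which reduces to showing (again by Riemann--Hurwitz) that covers with $\geq 3$ triple ramification points form a strict substack of $TT$.
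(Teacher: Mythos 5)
Your argument is correct in outline, but it takes a genuinely different route from the paper's. The paper proves the lemma fiberwise over $\Delta^c_{\gamma(p)=\gamma(q)} \subseteq \widetilde{X}_3$: it shows that the six-dimensional evaluation $H^0(\pp E^\vee, W) \to W|_{3p}\oplus W|_{3q}$ is either surjective or has kernel consisting entirely of singular curves, via a case analysis on the splitting type $E = \O(m)\oplus\O(n)$ and on whether $p$ or $q$ meets the directrix; this stays entirely inside the vector-bundle presentation and, as a byproduct, exhibits the nonempty fibers of $Z$ as linear subspaces of the expected codimension. You instead combine the upper bound $\operatorname{codim}(Z\subseteq S)\le 3$ (from realizing $Z$ as the zero locus of a map to the rank-$3$ bundle $\eta_q^*\sP^2_{\pp\E^\vee/\cP}(\W)$, which is exactly the paper's ambient setup) with a lower bound obtained by factoring through the quasi-finite surjection $Z \to TT$ and a classical branch-point count. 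The one step you should tighten is the assertion $\operatorname{codim}(TT \subseteq \H'_{3,g}(3)) = 1$: as written this is an expected-dimension count, and making it an equality requires both the \'etale-local structure of Hurwitz strata over configuration spaces of branch points and a Riemann-existence check that covers with two triple points exist (delicate for small $g$). Fortunately you do not need the equality: since $\H'_{3,g}(3)$ is irreducible (it is open in a vector bundle over the irreducible $X_3$) and $\H_{3,g}(3)$ is nonempty open, one gets $\operatorname{codim}(TT)\ge 1$ for free, hence $\dim Z \le \dim TT \le \dim S - 3$, and combining with your upper bound gives $\operatorname{codim}(Z) = 3$ on every nonempty component. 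Phrased this way your argument is complete and arguably shorter than the paper's; what the paper's computation buys in exchange is independence from the topological theory of Hurwitz spaces and explicit control of the fibers of $Z$, in the same spirit as the surjectivity lemmas used throughout Sections \ref{sec:111}--\ref{sec:3}.
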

\begin{proof}
We study the projection of $Z$ to $\Delta^c_{\gamma(p) = \gamma(q)}$.
Since $S$ is open in $\eta_p^*V_3$ and $\eta_p^*V_3$ is codimension 3 in $\eta_p^*\gamma^*\pi^*\U$, the fibers of $S \to \widetilde{X}_3$ are empty or codimension $3$ in the fibers of $\eta_p^*\gamma^*\pi^*\U \to \widetilde{X}_3$.
We will show that all fibers of $Z \to \Delta^c_{\gamma(p) = \gamma(q)}$
are either empty or codimension $6$ in the fibers of \[\eta_p^*\gamma^*\pi^*\U|_{\Delta^c_{\gamma(p) = \gamma(q)} } \to \Delta^c_{\gamma(p) = \gamma(q)} \subseteq \widetilde{X}_3.\]

The fiber of $Z \to \Delta^c_{\gamma(p) = \gamma(q)}$ over a point
$(\pp^1, E, p, q)$ is, by construction, the open subset of $f$ in the kernel of the evaluation map
\begin{align}
\label{kev}
H^0(\pp E^\vee, W) &\to W|_{3p} \oplus W|_{3q}\\
\nonumber f &\mapsto (f(p), f_y(p), \tfrac{1}{2} f_{y^2}(p), f(q), f_y(q), \tfrac{1}{2} f_{y^2}(q))
\end{align}
where $V(f)$ is smooth.  Since the codomain of the above map is 6-dimensional, it suffices to show that either the map is surjective or the kernel consists entirely of equations defining singular curves.

We treat two cases depending on the splitting type of $E$. Let us write $E = \O(m) \oplus \O(n)$ where $m \leq n$ and $m + n = g+2$.  Then $\Sym^3E \otimes \det E^\vee$ splits as
\begin{align} \Sym^3 E \otimes \det E^\vee &= (\O(3m) \oplus \O(2m + n) \oplus \O(m + 2n) \oplus \O(3n)) \otimes \O(-m-n) \label{Sym}\\
&= \O(2m - n) \oplus \O(m) \oplus \O(n) \oplus \O(2n - m). \notag
\end{align}
First, suppose that $2m - n \geq 1$, so that every summand of $\Sym^3 E \otimes \det E^\vee$ has degree $1$ or more.  Then we can factor the map  \eqref{kev} as
\begin{equation} \label{compo} H^0(\pp E^\vee, W) \to H^0(\gamma^{-1}\gamma(p), \O_{\pp^1}(3)) \oplus
H^0(\gamma^{-1}\gamma(q), \O_{\pp^1}(3)) \rightarrow W|_{3p} \oplus W|_{3q},
\end{equation}
and the second map is surjective.  We will prove that the first map is also surjective, so that \eqref{kev} is surjective.  The surjectivity of the first map in \eqref{compo} comes from considering the following sequence of sheaves on $\pp E^\vee$:
\[0 \rightarrow W \otimes \gamma^*\O_{\pp^1}(-2) \rightarrow W \rightarrow W|_{\gamma^{-1}\gamma(p)} \oplus W|_{\gamma^{-1}\gamma(q)} \rightarrow 0.\]
As in the proof of Lemma \ref{ev111}, it suffices to see that $0 = H^1(\pp^1, (\gamma_*W)(-2))$. Since $\gamma_*W = \Sym^3 E \otimes \det E^\vee$ and we assume all summands of this bundle have degree $\geq 1$, its twist down by $2$ has no $H^1$.

It remains to treat the case when $2m - n \leq 0$.
In this case, it turns out that the map
\begin{equation} \label{fmap} H^0(\pp E^\vee, W) \to H^0(\gamma^{-1}\gamma(p), \O_{\pp^1}(3)) \oplus
H^0(\gamma^{-1}\gamma(q), \O_{\pp^1}(3))
\end{equation}
is not surjective, as we now explain.
Since $p$ and $q$ are not in the same fiber of $\gamma: \P E^{\vee} \rightarrow \P^1$, we can choose a coordinate $x$ on the base $\P^1$ in which $\gamma(p) = 0$ and $\gamma(q) = 1$.

Next, we choose relative homogeneous coordinates $[Y_0:Y_1]$ on $\pp E^\vee$; more precisely, we will have
\begin{align*}
&Y_0 \in H^0(\P E^{\vee}, \O_{\P E^{\vee}}(1) \otimes \gamma^*\O_{\P^1}(-n)),\\
&Y_1 \in H^0(\P E^{\vee}, \O_{\P E^{\vee}}(1) \otimes \gamma^*\O_{\P^1}(-m)).
\end{align*}
To choose these coordinates, first note that each of the two summands in $E^\vee = \O(-m) \oplus \O(-n)$ defines a line in each fiber of $E^{\vee} \rightarrow \pp^1$, or in other words, a section of $\pp E^\vee \to \pp^1$. The section corresponding to $\O(-m)$ is called the
\emph{directrix} and is distinguished in that its image is the unique curve in $\P E^{\vee}$ of self-intersection $m - n < 0$. (The other section is not distinguished and depends on our choice of splitting for $E$.)
We define the coordinate $Y_0$ so that $V(Y_0)$ is the directrix and $Y_1$ so that $V(Y_1)$ is the other section corresponding to our choice of $\O(-n)$ summand.
With these coordinates on $\pp E^\vee$, any section $f \in H^0(\pp E^\vee, W)$ can be written as
\begin{equation} \label{feq} f = \gamma^* a \cdot Y_1^3 + \gamma^*b \cdot Y_1^2Y_0 + \gamma^*c \cdot Y_1Y_0^2 + \gamma^* d \cdot Y_0^3,
\end{equation}
where $a, b, c, d$ are sections of the summands of  $\Sym^3 E \otimes \det E^\vee$ as in \eqref{Sym}, i.e. polynomials in $x$ of degrees $2m - n, m, n,$ and $2n - m$ respectively.

Note that if $2m - n < 0$, then $a(x)$ would be identically zero, which forces $V(f)$ to be reducible. Thus, it suffices to consider the case $2m  - n = 0$. In this case, $a(x)$ has degree $0$ while $b(x), c(x),d(x)$ have positive degree. In terms of \eqref{feq}, the map \eqref{fmap} is given explicitly by
\[f \mapsto (a(0)Y_1^3 + b(0)Y_1^2Y_0 + c(0)Y_1Y_0^2 + d(0)Y_0^3, a(1)Y_1^3 + b(1)Y_1^2Y_0 + c(1)Y_1Y_0^2 + d(1)Y_0^3).
\]
Since $a(x)$ is constant, it follows that the image of \eqref{fmap} is those pairs of degree-$3$ homogeneous polynomials in $Y_0$ and $Y_1$ with the same coefficient of $Y_1^3$. Despite the fact that \eqref{fmap} is not surjective, we will still prove our original goal that either \eqref{kev} is surjective or the kernel consists entirely of equations defining singular curves.

First, suppose that one of $p$ or $q$ lies on the directrix of $\pp E^\vee$. Without loss of generality, say $p$ lies on the directrix, so $p$ is the point $(x, [Y_0:Y_1]) = (0, [0:1])$.
In order for $f$ to vanish at $(0, [0:1])$, we must have $a(0) = 0$. But $a$ is a section of $\O(2m - n) = \O$, which is to say $a$ is a constant, so $a$ is identically $0$. In this case, $V(f)$ must be singular. Thus, in this case, the kernel of \eqref{kev} consists entirely of equations defining singular curves.

Next, suppose neither $p$ nor $q$ lies on the directrix of $\pp E^\vee$, so that $Y_0(p) \neq 0$ and $Y_0(q) \neq 0$.  Then $y = Y_1/Y_0$ is a local coordinate on the vertical fiber that vanishes at $p$ and takes some nonzero value $y_1$ at $q$.  In the coordinate $(x,y)$ on $\P E^{\vee}$, then, we have $p = (0,0)$ and $q = (1,y_1)$, whereas $f \in H^0(\P E^{\vee}, W)$ is given by
\[f(x,y) = a(x) y^3 + b(x) y^2 + c(x) y + d(x).\]
Thus, in these coordinates, the map \eqref{kev} is given by
\[f\mapsto (d(0), c(0), b(0), a(1)y_1^3+b(1)y_1^2+c(1)y_1+d(1), 3a(1)y_1^2+b(1)y_1+c(1), 3a(1)y_1+b(1)).\]
This is surjective, because once $d(0), c(0)$, and $b(0)$ are chosen, the values of $a(1), b(1), c(1), d(1)$ can still be chosen arbitrarily given that $b,c$, and $d$ have positive degree.  This confirms the surjectivity of \eqref{kev} in this case and therefore completes the proof.
\end{proof}

\begin{lemma}
The intersection $\Zbar \cap Y$ has codimension at least $4$ in $S$.
\end{lemma}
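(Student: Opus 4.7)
The plan is to prove the stronger statement that $\overline{Z} \cap Y = \emptyset$, from which the codimension bound follows vacuously. A geometric point of $\overline{Z} \cap Y$ would correspond to a tuple $(\P^1, E, f, p, p)$ with $V(f)$ smooth and $p$ a triple ramification point, arising as the limit of points $(\P^1, E_t, f_t, p_t, q_t) \in Z$ where $p_t \neq q_t$ are both triple ramification points of $V(f_t)$ and $p_t, q_t \to p$. The key strategy is to argue, via local analysis in Weierstrass form, that such a collision of two triple ramification points at a smooth point forces $V(f)$ to acquire a singularity at $p$, contradicting the smoothness condition built into $S \subseteq \eta_p^*V_3$.

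To execute this, I would work in local coordinates $(x, y)$ on $\P E^\vee$ near $p = (0,0)$ with $\gamma(x, y) = x$. First, since $V(f)$ is smooth and irreducible of relative degree $3$ near $p$, it contains no vertical ruling through $p$, so the leading coefficient $A(x)$ of $f$ viewed as a polynomial in $y$ is nonzero at $x = 0$; by continuity, the same holds for $f_t$ in a neighborhood of $t = 0$, so one may complete the cube in $y$ to put $f_t$ uniformly into local Weierstrass form
\[f_t = A_t(x)\bigl((y')^3 + \alpha_t(x) y' + \beta_t(x)\bigr).\]
Second, I would verify by an elementary cubic computation that a point $(x_0, 0)$ in the $y'$ coordinate is a triple ramification point of $V(f_t) \to \P^1$ if and only if $\alpha_t(x_0) = \beta_t(x_0) = 0$, and that smoothness of $V(f_t)$ at such a point is equivalent to $\beta_t'(x_0) \neq 0$. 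In particular, smoothness of $V(f)$ at $p$ forces $\beta_0'(0) \neq 0$.

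Finally, given any test arc $(\P^1, E_t, f_t, p_t, q_t) \in Z$ specializing to $(\P^1, E, f, p, p)$, the values $\gamma(p_t) \neq \gamma(q_t)$ are two distinct common zeros of $\alpha_t$ and $\beta_t$ both tending to $0$ as $t \to 0$, so by continuity $\beta_0$ must vanish to order at least $2$ at $x = 0$, giving $\beta_0'(0) = 0$ and the desired contradiction. Hence $\overline{Z} \cap Y = \emptyset$. The main (and essentially only) obstacle is bookkeeping: verifying that the Weierstrass reduction can be carried out uniformly in a family, and that the triple ramification locus is correctly characterized by the vanishing of $\alpha_t$ and $\beta_t$. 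Once these local identifications are in place, the ``two distinct zeros collapsing into one simple zero'' contradiction is immediate.
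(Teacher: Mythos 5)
Your proposal is correct in outline but takes a genuinely different---and much heavier---route than the paper. The paper's proof is a two-line dimension count: $Y$ and $Z$ are disjoint by construction (they lie over $\Delta_{\gamma(p)=\gamma(q)}$ and its complement, respectively), so $\overline{Z}\cap Y\subseteq \overline{Z}\smallsetminus Z$; since $Z$ is dense and open in its closure, $\overline{Z}\smallsetminus Z$ has dimension strictly less than $\dim Z$, which already has codimension $3$ by Lemma~\ref{lem:codimZ}, giving codimension $\geq 4$. That argument uses no geometry at all. You instead prove the sharper statement $\overline{Z}\cap Y=\emptyset$, which is precisely what the paper's subsequent remark says the authors believe but do not need. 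Your local analysis is essentially sound: since the fiber divisor over $\gamma(p)$ is $3[p]$ with $p$ in the affine chart, the leading coefficient is nonzero there (note this is the right reason---vanishing of the leading coefficient only pushes a point of the fiber divisor to $y=\infty$, it does not force $V(f)$ to contain the ruling, so your stated justification is slightly off even though the conclusion holds); the depressed cubic has a triple root over $x_0$ iff $\alpha_t(x_0)=\beta_t(x_0)=0$, smoothness there is $\beta_t'(x_0)\neq 0$, and two distinct zeros of $\beta_t$ colliding forces $\beta_0'(0)=0$, a contradiction. To make this fully rigorous you would need to justify the passage from a point of $\overline{Z}$ to a one-parameter arc in $Z$ (valuative criterion) and phrase the ``colliding zeros'' step via Weierstrass division over a DVR if the ground field is not $\CC$. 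What your approach buys is the stronger conclusion that $Z$ is already closed in $S$; what it costs is all of this bookkeeping, none of which is needed for the excess intersection computation that follows.
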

\begin{proof}
Since $Y$ is disjoint from $Z$, we have $\overline{Z} \cap Y \subset \overline{Z} \smallsetminus Z$. The space $\overline{Z} \smallsetminus Z$ has dimension strictly less than the dimension of $Z$.
Since $Z$ has codimension $3$, we conclude $\overline{Z} \smallsetminus Z$ has codimension $4$ or more.
\end{proof}
\begin{remark}
In fact, we believe that $\overline{Z} \cap Y$ is empty, or in other words, that $Z$ is already closed in $S$. However, all that matters for the proof is that $\overline{Z} \cap Y$ has codimension $> 3$ in $S$.
\end{remark}

Recall, now, that our goal is to calculate the class of $\overline{Z} \subseteq S$, since it will satisfy
\[(\tilde{\eta}_p^\circ)_*[\Zbar] = [TT] \in A^1(\H_{3,g}'(3)),\]
thereby allowing us to calculate the class $[TT]$. Because of the excess component $Y$, we must use the excess intersection formula.
By excision, calculations in $A^3(S)$ can be performed on the complement of a locus of codimension $> 3$. In particular, we can work on the complement of $\overline{Z} \cap Y$, so we can assume $\overline{Z}$ and $Y$ are disjoint components of $\iota_p^{-1}(\eta_q^*V_3) \cap S \subseteq S$.

Let $\iota_Y$ be the inclusion map from $Y$ to $S$.  The excess intersection formula (see Proposition \ref{prop:excess}) applied to the diagram~\eqref{diagram3-excess} reads
\[(\iota_p\mid_S)^*(\iota_q)_*[\eta_q^*V_3] = [\overline{Z}] + (\iota_{Y})_*\alpha_Y.\]
Pushing forward along the proper map $\tilde{\eta}_p^\circ \colon S \to \H_{3,g}'(3)$, we have
\begin{equation}\label{excess-formula}
    (\tilde{\eta}_p^\circ)_*(\iota_p\mid_S)^*(\iota_q)_*[\eta_q^* V_3] = (\tilde{\eta}_p^\circ)_*[\overline{Z}] + (\tilde{\eta}_p^\circ)_*(\iota_Y)_* \alpha_Y.
\end{equation}
Toward calculating $(\tilde{\eta}_p^\circ)_*[\Zbar]$, we first calculate the second term on the right-hand side of \eqref{excess-formula}.

\begin{proposition}\label{excess.equiv}
We have $(\tilde{\eta}_p^\circ)_*(\iota_Y)_* \alpha_Y = \zeta_p + a_1 + gz \in A^1(\H_{3,g}'(3))$.
\end{proposition}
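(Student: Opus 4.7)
The plan is to apply the excess intersection formula from Proposition~\ref{prop:excess} to the diagram \eqref{diagram3-excess}, exploiting the facts (proved in Lemmas~\ref{lem:codimY} and \ref{lem:codimZ}) that $\overline{Z}$ has the expected codimension $3$ in $S$ while $Y$ is an excess component of codimension $2$, contributing in degree $d_1 - l_Y = 3 - 2 = 1$. The first task is to identify the relevant normal bundles. Since $\eta_q^*V_3$ is cut out of $\eta_p^*(\U \times_\B X_3)$ as a vector subbundle via the surjective evaluation map of Lemma~\ref{ev3}, the normal bundle $N_{\iota_q}$ is (the bundle pullback of) $\eta_q^*\mathscr{P}^2_{\pp \E^\vee / \cP}(\W)$. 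On the other hand, $\Delta_{p=q} \subseteq \widetilde{X}_3$ is the relative diagonal of $\pp \E^\vee \to \B$, so $N_{\Delta_{p=q}/\widetilde X_3} = T_{\pp \E^\vee/\B}|_{\Delta_{p=q}}$; flatness of $\phi \colon S \to \widetilde X_3$ then gives $N_{Y/S} = \phi|_Y^*\, T_{\pp \E^\vee/\B}$.

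The second step is to compute the two first Chern classes. Iterating the filtration \eqref{filt} gives
\[
c_1\bigl(\mathscr{P}^2_{\pp \E^\vee/\cP}(\W)\bigr) = 3 c_1(\W) + 3 c_1(\Omega_{\pp \E^\vee/\cP}),
\]
which, plugging in the expressions for $c_1(\W)$ and $c_1(\Omega_{\pp \E^\vee/\cP})$ already collected from \eqref{eq:c1omega}, simplifies to $3\zeta$. For the relative tangent bundle, the factorization $\pp \E^\vee \to \cP \to \B$ yields the exact sequence
\[
0 \to T_{\pp \E^\vee/\cP} \to T_{\pp \E^\vee/\B} \to \gamma^* T_{\cP/\B} \to 0.
\]
Combining $c_1(T_{\pp \E^\vee/\cP}) = 2\zeta - a_1 - (g+2)z$ with $c_1(T_{\cP/\B}) = 2z$ (from the relative Euler sequence on $\cP = \pp \V$, using $c_1(\V) = 0$) gives $c_1(T_{\pp \E^\vee/\B}) = 2\zeta - a_1 - g z$.

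The third step is to extract $\alpha_Y$. Since $\zeta_p = \zeta_q$ on $Y$ (where $p = q$), the degree-$1$ part of $c(j_Y^* N_{\iota_q})\, c(N_{Y/S})^{-1}$ reads
\[
\alpha_Y \;=\; c_1\bigl(j_Y^* N_{\iota_q}\bigr) - c_1\bigl(N_{Y/S}\bigr) \;=\; 3\zeta_p - \bigl(2\zeta_p - a_1 - g z\bigr) \;=\; \zeta_p + a_1 + g z \;\in\; A^1(Y).
\]

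Finally, I observe that $\tilde\eta_p^\circ \circ \iota_Y \colon Y \to \H_{3,g}'(3)$ is an isomorphism: a point of $Y$ is a tuple $(\pp^1, E, p, q, f)$ with $q = p$, so the map $(\pp^1, E, p, p, f) \mapsto (\pp^1, E, p, f)$ has the obvious inverse sending $(\pp^1, E, p, f) \mapsto (\pp^1, E, p, p, f)$. The pushforward of $\alpha_Y$ is therefore just $\zeta_p + a_1 + g z$, as claimed. The subtle point — which I expect to be the main obstacle — is the identification of $N_{\iota_q}$: because $\iota_q$ is the inclusion of a vector subbundle into the total space of an ambient vector bundle over $\widetilde X_3$, the normal bundle is the pullback of the quotient $\eta_q^*\mathscr{P}^2_{\pp \E^\vee/\cP}(\W)$, rather than any normal bundle intrinsic to the geometry of $\widetilde X_3$.
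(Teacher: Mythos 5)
Your proposal is correct and follows essentially the same route as the paper: it applies Proposition~\ref{prop:excess} to the diagram \eqref{diagram3-excess}, identifies $N_{\iota_q}$ with the pullback of $\eta_q^*\mathscr{P}^2_{\pp\E^\vee/\cP}(\W)$ and $N_{Y/S}$ with $(\phi|_Y)^*T_{\pp\E^\vee/\B}$, and computes $\alpha_Y = 3\zeta_p - (2\zeta_p - a_1 - gz)$ via the filtration \eqref{filt} and the relative tangent sequence. The only cosmetic difference is that you make explicit the isomorphism $Y \cong \H_{3,g}'(3)$ underlying the final pushforward, which the paper leaves implicit.
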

\begin{proof}
Let  $j_Y \colon Y \rightarrow \eta_q^*V_3$ be the restriction to $Y$ of the composition of the left-hand vertical maps in \eqref{diagram3-excess}.
 According to Proposition~\ref{prop:excess}, the cycle class $\alpha_Y$ is given by
\[\alpha_Y = \left\{c(j_Y^*N_{\iota_q}) c(N_{Y/S})^{-1}\right\}^1 \in A^1(Y),\]
where we write $\{-\}^k$ for the part of a Chow class in $A^k(Y)$.
Since the normal bundle of the diagonal $\Delta_{p = q}$ in $\widetilde{X}_3$ is naturally identified with $\eta_p^*T_{\P\E^\vee/\B} \otimes \O_{\Delta_{p = q}}$, we compute that
\begin{align*}
    (\tilde{\eta}_p^\circ)_* (\iota_Y)_* \alpha_Y &= (\tilde{\eta}_p^\circ \circ \iota_Y)_*\left(j_Y^* c_1(N_{\eta_q^*V_3/\eta_q^*(\U \times_\B X_3)}) - c_1(N_{\phi^{-1}(\Delta_{p = q})/S})\right) \\&= (\tilde{\eta}_p^\circ \circ \iota_Y)_*\left(j_Y^*\eta_q^* c_1(N_{V_3/(\U \times_\B X_3)}) - (\phi|_Y)^*c_1(N_{\Delta_{p = q}/\widetilde{X}_3})\right) \\
    &= (\tilde{\eta}_p^\circ \circ \iota_Y)_*\left(j_Y^*\eta_q^* c_1(\sP^2_{\pp \E^\vee/\cP}(\W)) - (\phi|_Y)^{*}c_1(\eta_p^*T_{\pp \E^\vee/\B} \otimes \O_{\Delta_{p = q}})\right) \\
    &= 3\zeta_p - (2\zeta_p - a_1 - gz) \in A^1(\H_{3,g}'(3)),
\end{align*}
where the last equality follows from the following computations. Applying the exact sequence \eqref{filt} together with the computation in Lemma~\ref{zv}, we have
\begin{align*}
    c_1(\sP^2_{\P\E^\vee/\cP}(\W)) &= c_1(\sP^1_{\P\E^\vee/\cP}(\W)) + c_1(\Sym^2\Omega_{\P\E^\vee/\cP} \otimes \W) \\
    &= (2c_1(\W) + c_1(\Omega_{\P\E^\vee/\cP})) + (2c_1(\Omega_{\P\E^\vee/\cP}) + c_1(\W)) \\
    &= 3(c_1(\W) + c_1(\Omega_{\P\E^\vee/\cP}))\\
    &= 3\zeta_p.
\end{align*}
Moreover, by the relative cotangent sequence $0 \to T_{\P\E^\vee/\cP} \to T_{\P\E^\vee/\B} \to \gamma^*T_{\cP/\B} \to 0$,
we have
\[c_1(T_{\P\E^\vee/\B}) = -c_1(\Omega_{\P\E^\vee/\cP}) - \gamma^* c_1(\Omega_{\cP/\B}) = 2\zeta_p - a_1 - gz. \qedhere\]
\end{proof}

Now the formula \eqref{excess-formula} can be applied to obtain the additional class $(\tilde{\eta}_p^\circ)_*[\Zbar] = [TT]$ on $A^1(\H_{3,g}'(3))$ that vanishes on $A^*(\H_{3,g}(3))$.

\begin{lemma}\label{zrel3-3}
We have $-\zeta_p - a_1 + 3a_2' - gz = 0 \in A^*(\H_{3,g}(3))$.
\end{lemma}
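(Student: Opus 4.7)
The strategy is to unpack the excess intersection formula \eqref{excess-formula}, which after substituting the excess term computed in Proposition~\ref{excess.equiv} and the identity $(\tilde\eta_p^\circ)_*[\overline Z]=[TT]$ reads
\[(\tilde\eta_p^\circ)_*(\iota_p|_S)^*(\iota_q)_*[\eta_q^*V_3] = [TT] + \zeta_p + a_1 + gz \in A^1(\H_{3,g}'(3)).\]
Since $[TT]$ vanishes on $A^*(\H_{3,g}(3))$ by excision, establishing the lemma amounts to showing that the left-hand side equals $3a_2'$.

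For the class $(\iota_q)_*[\eta_q^*V_3] \in A^3(\eta_p^*(\U\times_\B X_3))$, I will use that $V_3$ is the kernel of the surjective evaluation map $\mathrm{ev}_2\colon (\pi\circ\gamma)^*\U\to\sP^2_{\pp\E^\vee/\cP}(\W)$ on $X_3$: by Lemma~\ref{topchern}, this class is pulled back from $c_3(\eta_q^*\sP^2_{\pp\E^\vee/\cP}(\W))\in A^3(\widetilde X_3)$. The filtration~\eqref{filt} exhibits $\sP^2_{\pp\E^\vee/\cP}(\W)$ as a successive extension of the line bundles $\W$, $\W\otimes\Omega_{\pp\E^\vee/\cP}$, and $\W\otimes\Omega_{\pp\E^\vee/\cP}^{\otimes 2}$, so its top Chern class factors as the product of their first Chern classes, each expressible in $\zeta_q$ and $z_q$ via \eqref{eq:c1omega} and the formula $c_1(\W) = 3\zeta_q - a_1 - (g+2)z_q$.

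To pass from $A^*(\widetilde X_3)$ to $A^*(\H_{3,g}'(3))$, I will exploit the compatibility of proper pushforward with flat pullback on the two Cartesian squares on the right of diagram~\eqref{diagram3}: since vector bundle projections are flat, this reduces the problem to pushing forward $c_3(\eta_q^*\sP^2_{\pp\E^\vee/\cP}(\W))$ along $\eta_p\colon\widetilde X_3\to X_3$ and then transferring through the vector bundle isomorphism $A^*(X_3)\cong A^*(V_3)$. Factoring $\eta_p$ as the composition of two $\pp^1$-bundles $\widetilde X_3 = X_3\times_\B\pp\E^\vee\to X_3\times_\B\cP\to X_3$, I apply Lemma~\ref{ProjectiveChow} twice, reducing high powers of $\zeta_q$ via $\zeta_q^2 = (a_1+(g+2)z_q)\zeta_q - (a_2+a_2'z_q)$ and of $z_q$ via $z_q^2=-c_2$, then using the projection formula to drop all classes not of the form $\zeta_q z_q$ times a pullback.

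The main obstacle is the careful bookkeeping in this double pushforward, where one must distinguish classes pulled back from the first factor of $\widetilde X_3$ (such as $\zeta_p$ and $z_p$, which survive to $X_3$) from those pulled back from the second (such as $\zeta_q$ and $z_q$, which do not), and apply the $\pp^1$-bundle relations in the correct ring at each stage. The expected answer is $3a_2'$; combining with the excess contribution $\zeta_p + a_1 + gz$ then yields $[TT] = -\zeta_p - a_1 + 3a_2' - gz \in A^1(\H_{3,g}'(3))$, and excision finishes the proof.
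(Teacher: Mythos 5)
Your proposal is correct and follows essentially the same route as the paper: both identify $(\iota_q)_*[\eta_q^*V_3]$ with the pullback of $\eta_q^*c_3(\sP^2_{\pp\E^\vee/\cP}(\W))$, factor that Chern class through the filtration \eqref{filt} into the product $c_1(\W)\,c_1(\Omega_{\pp\E^\vee/\cP}\otimes\W)\,c_1(\Omega_{\pp\E^\vee/\cP}^{\otimes2}\otimes\W)$, reduce powers of the relative hyperplane classes via the projective bundle relations, and push forward using the projection formula to land on $3a_2'$, which combined with Proposition~\ref{excess.equiv} and the vanishing of $[TT]$ gives the relation. The only (immaterial) difference is that you push forward along $\eta_p\colon\widetilde X_3\to X_3$ factored as two $\P^1$-bundles, whereas the paper uses base-change compatibility on the outer square of \eqref{bigF} to reduce to $(\pi\circ\gamma)_*$ on $\pp\E^\vee\to\B$; these are the same computation.
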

\begin{proof}
We will obtain this relation by using the fact that $0 = [TT] \in A^*(\H_{3,g}(3))$ and $[TT] = (\tilde{\eta}_p^\circ)_*[\Zbar]$.  In order to compute this pushforward, we first compute the left-hand side of \eqref{excess-formula}.

We denote by $F$ the composition $\H_{3,g}'(3) \rightarrow V_3 \rightarrow \U\times_{\B} X_3\rightarrow X_3$.
Recall that $\eta_q^*V_3$ is a subbundle of the vector bundle $\eta_p^*(\U \times_{\B} X_3)$ over $\widetilde{X}_3$. The quotient bundle of this inclusion is $\eta_q^*\mathscr{P}^2_{\pp \E^\vee/\cP}(\W)$.
As such, the fundamental class of $\eta_q^*V_3$ in $\eta_p^*(\U \times_{\B} X_3)$ is the pullback to $\eta_p^*(\U \times_{\B} X_3)$ of $\eta_q^*c_3(\sP^2_{\pp \E^\vee/\cP}(\W)) \in A^3(\widetilde{X}_3)$.

We will make use of the following diagram:
\begin{equation}\label{bigF}
    \begin{tikzcd}
\B                                & X_3 \arrow[l, "\pi \circ \gamma"']                  &                                                                &                                   & {\H'_{3,g}(3)} \arrow[lll, "F"']                               \\
X_3 \arrow[u, "\pi \circ \gamma"] & \widetilde{X}_3 \arrow[l, "\eta_q"'] \arrow[u, "\eta_p"] & \eta_p^*(\U\times_{\B} X_3) \arrow[l, "\text{v.b.}"'] & \eta_p^* V_3 \arrow[l, "\iota_p"'] & S \arrow[l, "\text{open}"'] \arrow[u, "\tilde{\eta}_p^\circ"'].
\end{tikzcd}
\end{equation}
Applying the push-pull formula along the outer square of \eqref{bigF} and using the exact sequence \eqref{filt}, we have
\begin{align*}
    &{\phantom{{}={}}} (\tilde{\eta}_p^\circ)_*(\iota_p\mid_S)^*(\iota_q)_*[\eta_q^* V_3] \\
    &= (\tilde{\eta}^\circ_p)_*(\iota_p\mid_S)^*\eta_q^*c_3(\sP^2_{\pp\E^\vee/\cP}(\W)) \\
    &= F^*(\pi\circ\gamma)^*(\pi\circ\gamma)_*c_3(\sP^2_{\pp\E^\vee/\cP}(\W)) \\
    &= F^*(\pi\circ\gamma)^*(\pi\circ\gamma)_* c_1(\Omega_{\pp \E^\vee/\cP}^{\otimes 2} \otimes \W) c_2(\sP^1_{\pp \E^\vee/\cP}(\W)) \\
    &= F^*(\pi\circ\gamma)^*(\pi\circ\gamma)_* c_1(\Omega_{\pp \E^\vee/\cP}^{\otimes 2} \otimes \W) c_1(\Omega_{\pp \E^\vee/\cP} \otimes \W) c_1(\W) \\
    &= F^*(\pi\circ\gamma)^*(\pi\circ\gamma)_*\left((2(-2\zeta_p + a_1 + (g+2)z) + 3\zeta_p - (a_1 + (g + 2)z) \right.\\
&\qquad \qquad \qquad \qquad \qquad \qquad \cdot    ((-2\zeta_p + a_1 + (g+2)z) + 3\zeta_p - (a_1 + (g + 2)z)) \\
&\qquad \qquad \qquad \qquad \qquad \qquad \cdot (3\zeta_p - (a_1 + (g + 2)z))
    \left.\right). \\
    \intertext{Expanding this as a polynomial in $\zeta_p$, we find the following:}
    &= F^*(\pi\circ\gamma)^*(\pi\circ\gamma)_*\left[-3\zeta_p^3 + 4(a_1 + (g+2)z)\zeta_p^2 - (a_1^2 + 2(g+2)a_1z +(g+2)^2z^2)\zeta_p\right] \\
    &= F^*(\pi\circ\gamma)^*(\pi\circ\gamma)_*\left[-3\zeta_p^3 + 4A\zeta_p^2 - A^2\zeta_p\right],
\end{align*}
where
\[A \coloneq c_1(\E) = a_1 + (g+2)z.\]
From here, setting $B \coloneq c_2(\E) = a_2 + a_2'z$,
and using the relation $\zeta_p^2 = A\zeta_p - B$,
which follows from the projective bundle formula in $A^*(\P \E^{\vee})$, the above chain of equalities reduces to
\[F^*(\pi \circ \gamma)^* (\pi \circ \gamma)_*(3B \zeta_p - AB).\]
Since $A$ and $B$ are pulled back under $\gamma$, and $\gamma_*(1) = 0$ whereas $\gamma_*(\zeta_p) = 1$, an application of the projection formula shows that the above equals
\[F^*(\pi \circ \gamma)^*\pi_*(3B) = 3F^*(\pi \circ \gamma)^*\pi_*(a_2 + a_2'z).\]
Similarly, since $a_2$ and $a_2'$ are pulled back under $\pi$, an analogous application of the projection formula shows that the above equals $3a_2'$.

In all, then, we have proven that the left-hand side of equation \eqref{excess-formula} equals $3a_2'$.  Via \eqref{excess-formula} and Proposition \ref{excess.equiv}, we thus obtain the following equation in $A^1(\H'_{3,g}(3))$:
\[[TT] = (\tilde{\eta}_p^\circ)_*[\overline{Z}] = (\tilde{\eta}_p^\circ)_*(\iota_p\mid_S)^*[\eta_q^* V_3] - (\tilde{\eta}_p^\circ \circ \iota_Y)_* \alpha_Y = 3a_2'- \zeta_p - a_1  - gz.\]
Since $[TT]$ vanishes on $A^*(\H_{3,g}(3))$ by definition, the relation in the statement of the lemma is proved.
\end{proof}

We now have all of the requisite ingredients to prove, at last, that the Chow ring of $\sH_{3,g}(3)$ is trivial.

\begin{lemma}\label{lem:3}
$A^*(\sH_{3,g}(3)) = \qq$.
\end{lemma}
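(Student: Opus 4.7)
The plan is to exploit the linear structure of the four relations already at our disposal. Because the map $\H_{3,g}(3) \to \sH_{3,g}(3)$ is a $\mu_2$-gerbe, it induces an isomorphism on rational Chow rings, so it suffices to show $A^*(\H_{3,g}(3)) = \Q$. The discussion preceding Lemma \ref{rel3} has already established that $A^*(\H_{3,g}(3))$ is generated as a $\Q$-algebra by the four classes $\zeta_p$, $z$, $a_1$, and $a_2'$, so all that remains is to show that each of these generators vanishes in positive degree.

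I would first combine Lemmas \ref{zrel3-1} and \ref{zrel3-2}: adding them eliminates $\zeta_p$ and yields $a_1 + gz = 0$, so $a_1 = -gz$. Substituting back into Lemma \ref{zrel3-2} then gives $\zeta_p = \tfrac{4}{3} z$, and Lemma \ref{rel3} forces $a_2' = \tfrac{8g+12}{9} a_1 = -\tfrac{(8g+12)g}{9} z$. At this stage every generator has been written as a rational multiple of the single class $z$.

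The final step is to feed these expressions into the remaining relation, Lemma \ref{zrel3-3}. After simplification that relation collapses to $c(g) \cdot z = 0$ for a concrete scalar $c(g) \in \Q$, and a quick expansion shows that $c(g) = -\tfrac{4}{3}(2g+1)(g+1)$. This is nonzero for every $g \geq 0$, so $z = 0$, and consequently $\zeta_p = a_1 = a_2' = 0$ as well, yielding $A^*(\H_{3,g}(3)) = \Q$.

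The main potential obstacle would have been a rank deficiency in this $4 \times 4$ linear system, which would have forced us to extract a further geometric relation beyond the $TT$ computation. Fortunately the coefficient $(2g+1)(g+1)$ never vanishes in the range of interest, so the four lemmas already proved in this section suffice, and no additional excision calculation is required to close out the argument.
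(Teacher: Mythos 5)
Your proposal is correct and follows exactly the paper's argument: the paper simply asserts that the four relations of Lemmas \ref{rel3}, \ref{zrel3-1}, \ref{zrel3-2}, and \ref{zrel3-3} are linearly independent in the generators $\zeta_p$, $z$, $a_1$, $a_2'$, while you carry out the elimination explicitly and verify that the resulting coefficient $-\tfrac{4}{3}(2g+1)(g+1)$ never vanishes. The computations check out, so this is the same proof with the linear algebra made explicit.
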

\begin{proof}
Since the Chow rings of $\sH_{3,g}(3)$ and $\H_{3,g}(3)$ are isomorphic, it suffices to show that $A^*(\H_{3,g}(3)) = \qq$. As the relations obtained in Lemma \ref{rel3}, \ref{zrel3-1}, \ref{zrel3-2}, and \ref{zrel3-3} are linearly independent in terms of generators $\zeta_p$, $z$, $a_1$, and $a_2'$ of $A^*(\H_{3,g}(3))$, the result follows.
\end{proof}

\section{Conclusion}

The combination of Lemmas \ref{ckgp111}, \ref{lem:111}, \ref{ckgp21}, \ref{lem:21}, \ref{ckgp3}, and \ref{lem:3} proves Theorem~\ref{withramification}---that is, that all of the degree-3 Hurwitz spaces with a single marked ramification point have trivial Chow ring and the Chow--K\"unneth generation Property.  Since Lemma \ref{h2} shows that the same is true of the degree-2 Hurwitz spaces with a single marked ramification point, and Lemma \ref{lem:BDs} shows that the codimension-1 boundary strata in $\sHbar_{3,g}$ are the images of gluing maps from products of such degree-2 or degree-3 marked Hurwitz spaces, we deduce that all of the codimension-1 boundary strata in $\sHbar_{3,g}$ have trivial Chow ring.  As discussed in the introduction, when combined with excision and the fact that $A^2(\sH_{3,g}) =0$ by the results of \cite{CaLa:22}, this implies that $A^2(\sHbar_{3,g})$ is generated by the fundamental classes of closures of codimension-2 boundary strata; that is, Theorem~\ref{main} is proved.

\bibliographystyle{alpha}

\end{document}